\newcommand{\final}{True}
\definecolor{blue}{rgb}{0,0,.6}
\newcommand{\crochets}[1]{\left[#1\right]}		%Les crochets
\newcommand{\Parentheses}[1]{\left(#1\right)}		%Les Parentheses
\newcommand{\parentheses}[1]{(#1)}			%Les parentheses
\newcommand{\ensemble}[1]{\left\lbrace#1\right\rbrace }	%Les accolades
\newcommand{\intervalleff}[2]{\left[#1,#2\right]}
\newcommand{\intervalleoo}[2]{\left(#1,#2\right)}
\newcommand{\intervalleof}[2]{\left(#1,#2\right]}
\newcommand{\MAT}[1]{\ifthenelse{\equal{\final}{False}}{\textcolor{red}{#1}}{#1}}
\let\sqrtTEMP\sqrt
\def\sqrt#1{\sqrtTEMP{#1\,}}
\renewcommand{\=}{:\hspace{0.5mm}=}	% := (pour les définitions)
\newcommand{\Fb}{\mathcal{F}} %Transformée de Fourier
\newcommand{\Lb}{\mathcal{L}} %Transformée de Laplace
\renewcommand{\frown}[1]{\hspace{-0.379mm}{\stackrel{\includegraphics[scale=1]{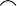}}{#1}}}
\renewcommand{\hat}[1]{\hspace{-0.379mm}{\stackrel{\includegraphics[scale=1]{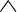}}{#1}}}
\newcommand{\frownhat}[1]{\hspace{-0.379mm}{\stackrel{\includegraphics[scale=1]{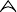}}{#1}}}
\newcommand{\point}{\includegraphics[scale=1]{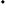}}
\newcommand{\indicatrice}[1]{\mathds{1}_{#1}}	%Fonction indicatrice (besoin du Package Dsfont)
\newcommand{\lessim}{\lesssim}
\newcommand{\GRd}{G_{{\!\scriptscriptstyle{R}}}^{{\scriptscriptstyle{d}}}}
\newcommand{\GRD}{G_{{\!\scriptscriptstyle{R}}}^{{\scriptscriptstyle{D}}}}
\newcommand{\HalfaONE}{H_{\alfa}^{{\scriptscriptstyle{\parentheses{1}}}}}
\newcommand{\epsR}{\varepsilon_{{\!\scriptscriptstyle{R}}}}
\newcommand{\epsI}{\varepsilon_{{\!\scriptscriptstyle{I}}}}
\newcommand{\CvzeroSHARP}{\,\raisebox{-1.290mm}{\includegraphics[scale=1]{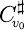}}\,}
\newcommand{\Cvzero}{\,\raisebox{-1.290mm}{\includegraphics[scale=1]{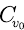}}\,}
\newcommand{\TCvzero}{\,\raisebox{-1.290mm}{\includegraphics[scale=1]{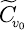}}\,}
\newcommand{\Cvuzero}{\,\raisebox{-1.290mm}{\includegraphics[scale=1]{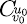}}\,}
\newcommand{\?}{\hspace{0.1mm}}	%Pour définir \, mais en plus fin
\newcommand{\verti}[1]{\left\vert #1 \right\vert}
\newcommand{\vertii}[2]{\Vert #1 \Vert_{#2}}
\renewcommand\Re{\operatorname{\mathfrak{Re}}}
\renewcommand{\Im}{\operatorname{\mathfrak{Im}}}
\newcommand{\realpart}[1]{\Re\parentheses{#1}}
\newcommand{\impart}[1]{\Im\parentheses{#1}}
\newcommand{\Realpart}[1]{\Re\Parentheses{#1}}
\newcommand{\Impart}[1]{\Im\Parentheses{#1}}
\newcommand{\Sb}{\mathcal{S}}
\newcommand{\Eb}{\mathcal{E}}
\newcommand{\alfa}{\theta}
\newcounter{FIGURE}
\renewcommand{\theFIGURE}{\Roman{FIGURE}}
\newcounter{HEHalfSpace}
\renewcommand{\theHEHalfSpace}{\roman{HEHalfSpace}}
\newcounter{AUn}
\renewcommand{\theAUn}{\roman{AUn}}
\newcounter{ADeux}
\renewcommand{\theADeux}{\roman{ADeux}}
\newcounter{ALargeDelta}
\newcounter{ASimpleRoots}
\renewcommand{\theASimpleRoots}{\roman{ASimpleRoots}}
\newcounter{ATripleRoots}
\renewcommand{\theATripleRoots}{\roman{ATripleRoots}}
\newcounter{ATwoDoubleRoots}
\renewcommand{\theATwoDoubleRoots}{\roman{ATwoDoubleRoots}}
\newcounter{AOneDoubleRoots}
\renewcommand{\theAOneDoubleRoots}{\roman{AOneDoubleRoots}}
\def\R{\mathbb R}
\newtheorem{theorem}{\textbf{Theorem}}[section]
\newtheorem{lemma}[theorem]{\textbf{Lemma}}
\newtheorem{corollary}[theorem]{\textbf{Corollary}}
\newtheorem{remark}[theorem]{\textbf{Remark}}
\numberwithin{equation}{section}
\newcommand{\Erfc}{\textsl{Erfc}\,}
\title{The field-road diffusion model: fundamental solution and asymptotic behavior} %(Si modification, changer aussi dans les métadonnées)
\date{}	%Uncomment to not show the date
\author[1]{Matthieu {\sc Alfaro}}
\author[2]{Romain {\sc Ducasse}}
\author[1]{Samuel {\sc Tr\'eton}}
\affil[1]{Universit\'e de Rouen Normandie, CNRS, Laboratoire de Math\'ematiques {Rapha\"el Salem} (LMRS), Saint-Étienne-du-Rouvray, France.}
\affil[2]{Université Paris Cité and Sorbonne Université, CNRS, Laboratoire
{Jacques-Louis Lions} (LJLL), F-75006 Paris, France.}
\begin{document}

\maketitle

\begin{abstract} 
We consider the {\it linear} field-road system, a model for {\it fast diffusion channels} in population dynamics and ecology. This system takes the form of a system of PDEs set on domains of different dimensions, with exchange boundary conditions.
Despite the intricate geometry of the problem, we provide an explicit expression for its fundamental solution and for the solution to the associated Cauchy problem. The main tool is a Fourier (on the road variable)/Laplace (on time) transform. In addition, we derive estimates for the decay rate of the $L^\infty$ norm of these solutions.
\end{abstract}
\vspace{3pt}

\noindent{\textbf{Key Words:} field-road model, diffusion, exchange boundary conditions, fundamental solution, decay rate.}

\vspace{3pt}

\noindent{\textbf{AMS Subject Classifications:} 35K40, 35C15, 92D25, 35B40.}

%%%%%%%%%%%%%%%%%%%%%%%%%%%%%%%%%%%%%%%%%%%%%%%%%%%%%%%%%%%%%%%%%%%%%%%%%%
%%%%%%%%%%%%%%%%%%%%%%%%%%%%%%%%%%%%%%%%%%%%%%%%%%%%%%%%%%%%%%%%%%%%%%%%%%
%%%%%%%%%%%%%%%%%%%%%%%%%%%%%%%%%%%%%%%%%%%%%%%%%%%%%%%%%%%%%%%%%%%%%%%%%%

\section{Introduction}

In this work we consider the solutions $\Parentheses{v=v(t,x,y), u=u(t,x)}$ to the {\it linear} field-road model
\label{s:intro}
\begin{equation}\label{syst}
\left\{
\begin{array}{ll}
	\partial_{t} v = d \Delta v,
&\quad t>0, \; x \in \R^{N-1}, \; y>0, \vspace{4pt}\\
	- d \partial_{y} v|_{y=0} = \mu u - \nu v|_{y=0},
&\quad t>0, \; x \in \R^{N-1}, \vspace{4pt}\\
	\partial_{t} u = D \Delta u + \nu v|_{y=0} - \mu u,
&\quad t>0, \; x \in \R^{N-1}.
\end{array}
\right.
\end{equation}
Here $N\geq 2$ and $d$, $D$, $\mu$ and $\nu$ are positive constants. This system is actually the linear (purely diffusive) part of the \emph{field-road} model, introduced by Berestycki, Roquejoffre and Rossi \cite{Ber-Roq-Ros-13-1} to describe the spread of invasive species in presence of networks with fast propagation. We review the original system later in this section.

\medskip

Let us mention that phenomena of spatial spread are highly relevant to the understanding of biological invasions, spreads of emergent diseases, as well as spatial shifts in distributions in the context of climate change. There is a wide literature dedicated to these topics, let us refer for instance to \cite{Murray1, Murray2} and the reference therein for more details. In the recent years, there has been a growing recognition of the importance of {\it fast diffusion channels} on biological propagations: for instance, an accidental transportation via human activities of some individuals towards northern and eastern France may be the cause of accelerated propagation of the pine
processionary moth \cite{Rob-et-al-12}. Also, in Canada, some GPS data revealed that wolves travel faster along seismic lines (i.e. narrow strips cleared for energy exploration), thus increasing their chances to meet a prey \cite{MacKen-et-al-12}. It is also acknowledged that fast diffusion channels (roads, airlines, etc.) play a central role in the propagation of epidemics. As is well known, the spread of the black plague, which killed about a third of the European population in the 14th century, was favoured by the trade routes, especially the Silk Road, see  \cite{PesteNoireRoutesSoie}. More recently, some evidences of the the radiation of the COVID epidemic along highways and transportation infrastructures were found \cite{Gat-et-al-20}.

In order to capture the phenomena induced by such fast diffusion channels, the so-called field-road reaction diffusion system 
\begin{equation}\label{syst-non-lin}
\left\{
\begin{array}{ll}
	\partial_{t} v = d \Delta v + f(v),
&\quad t>0, \; x \in \R^{N-1}, \; y>0, \vspace{4pt}\\
	- d \partial_{y} v|_{y=0} = \mu u - \nu v|_{y=0},
&\quad t>0, \; x \in \R^{N-1}, \vspace{4pt}\\
	\partial_{t} u = D \Delta u + \nu v|_{y=0} - \mu u,
&\quad t>0, \; x \in \R^{N-1},
\end{array}
\right.
\end{equation}
was proposed by Berestycki, Roquejoffre and Rossi \cite{Ber-Roq-Ros-13-1}. The mathematical problem then amounts to describing survival and propagation in a non-standard physical space: the geographical domain consists in the half-space (the \lq\lq field'') $x\in \R^{N-1}$, $y>0$, bordered by the hyperplane (the \lq\lq road'') $x\in \R^{N-1}$, $y=0$. In the field, individuals diffuse with coefficient $d>0$ and their density is given  by $v=v(t,x,y)$. In particular $\Delta v$ has to be understood as $\Delta_x v+\partial_{yy}v$. On the road, individuals typically diffuse faster ($D>d$) and their density is given by $u=u(t,x)$. In particular $\Delta u$ has to be understood as $\Delta_x u$. The exchanges of population between the road and the field are described by the second equation in system \eqref{syst-non-lin}, where $\mu>0$ and $\nu >0$. These Robin type boundary conditions link the field and the road equations and, in some sense, are the core of the model. 

In a series of works \cite{Ber-Roq-Ros-13-1, Ber-Roq-Ros-13-2, Ber-Roq-Ros-tw, Ber-Roq-Ros-shape},  Berestycki, Roquejoffre and Rossi studied the field-road system with $N=2$ and $f$ a Fisher-KPP nonlinearity. They shed light on an {\it acceleration phenomenon}: when $D>2d$, the road  enhances the global diffusion and  the spreading speed exceeds the standard Fisher-KPP invasion speed. Since then, many generalizations or related problems have been studied. Field-road models with (periodic) heterogeneities are considered in \cite{Gil-Mon-Zho-15, Zha-21, Aff-22}. Situations where the field is not a half-space but a more general domain are studied in \cite{Tel-16, Ros-Tel-Val-17, Duc-18, Bog-Gil-Tel-21}. The setting where the Laplace operators are replaced with non-local operators, accounting for more complex diffusion processes, is tackled in \cite{AC1, AC2}. The papers \cite{BDR1, Ber-Duc-Ros-20} consider the interaction between fast-diffusion networks and ecological phenomena such as climate change. In \cite{PauthierLongRangeExchanges1, PauthierLongRangeExchanges2, PauthierLongRangeExchanges3}, the author introduces \textit{long range exchanges} so that the road may catch individuals from anywhere in the field and \textit{vice versa}.

\medskip

Despite these results on \eqref{syst-non-lin} and its variations,  it turned out that the {\it linear} field-road system --- obtained by letting $f\equiv 0$ in \eqref{syst-non-lin} --- was not fully understood: there were no complete expressions for the fundamental solution, and the decay rate of the $L^{\infty}$ norm of the solutions to \eqref{syst} as $t\to +\infty$ was not known.

In the present paper, we thus consider \eqref{syst} as a starting point and intend at filling this gap. It turns out that we can reach an explicit expression for both the fundamental solution and the solution to the associated Cauchy problem. To do so, we perform a \lq\lq Fourier in $x$/Laplace in $t$'' transform, which seems to be the good way to understand the intricate exchange boundary conditions. These explicit expressions  enable, in particular, to provide a sharp (possibly up to a logarithmic term) decay rate of the $L^\infty$ norm of the solution.

Such a linear rate estimate is a key stone information to tackle difficult nonlinear issues. For instance, it is expected, see \cite{Fuj-66} or \cite{Alf-Kav-21}, that such a decay rate determines the so-called {\it $\text{Fujita}$ exponent} $p_F$ for the system \eqref{syst-non-lin} when $f(v)=v^{1+p}$: this exponent separates \lq\lq systematic blow-up'' (when $0<p\leq p_F)$ from \lq\lq possible extinction'' (when $p>p_F$). Also, understanding the Fujita blow-up phenomena when $f(v)=v^{1+p}$ typically enables, see \cite{Alf-fuj-17}, to solve the issue of \lq\lq extinction \textit{vs} propagation'' in the population dynamics model \eqref{syst-non-lin} with $f(v)=v^{1+p}(1-v)$. Such a nonlinearity  serves a model for the so-called {\it Allee effect} \cite{Allee}, roughly meaning that the {\it per capita} growth rate of the population is not maximal at small density. We plan to address such issues in a future work.

%%%%%%%%%%%%%%%%%%%%%%%%%%%%%%%%%%%%%%%%%%%%%%%%%%%%%%%%%%%%%%%%%%%%%%%%%%
%%%%%%%%%%%%%%%%%%%%%%%%%%%%%%%%%%%%%%%%%%%%%%%%%%%%%%%%%%%%%%%%%%%%%%%%%%
%%%%%%%%%%%%%%%%%%%%%%%%%%%%%%%%%%%%%%%%%%%%%%%%%%%%%%%%%%%%%%%%%%%%%%%%%%

\section{Main results}\label{s:results}

Let $N \geq 2$ be an integer. A generic point $X\in \R^{N}$ will be written $X=(x,y)$ with $x\in \R^{N-1}$ and $y\in \R$. For the \lq\lq variable of integration'', we reserve the notation ${Z=(z,\omega)\in \R^N}$, with $z\in \R^{N-1}$ and $\omega \in \R$. For the \lq\lq $x$-Fourier variable'', we use the notation $\xi \in \R^{N-1}$, while, for the \lq\lq $t$-Laplace variable'', we reserve the notation $s>0$, see Appendix \ref{ss:appendix_Transforms}. We denote the upper half-space (the \lq\lq field'') $\R^{N}_{+} \= \R^{N-1} \times (0,+\infty)$, its boundary being the hyperplane (the \lq\lq road'') ${\partial\R^N_{+} = \R^{N-1}\times\{0\}}$.\medskip

For given $\mu,\nu>0$ and $D>d>0$ (we refer to Remark \ref{rem:D-egal-d} for the case $D\leq d$), we thus consider the linear field-road problem \eqref{syst} supplemented with the initial datum
\begin{equation}\label{initial_data}
\left\{
\begin{array}{lll}
	v|_{t=0} = v_{0},
	&\quad X \in \mathbb{R}^{N}_{+},
	&\quad v_{0}\in L^{\infty}\parentheses{\mathbb{R}^{N}_{+}}, \vspace{4pt}\\
		u|_{t=0} = u_{0},
	&\quad x \in \R^{N-1},
	&\quad u_{0}\in  L^{\infty}\parentheses{\mathbb{R}^{N-1}}. 
\end{array}
\right.
\end{equation}
Our first main contribution is to provide an explicit expression of the solution to this Cauchy problem. 

\begin{theorem}[Solution to the \textit{linear} field-road Cauchy problem]\label{th:heat-eq-FR-space-SOLUTION} Assume $D>d$. Then the unique bounded solution to the Cauchy problem \eqref{syst}\?--\,\eqref{initial_data} is
\begin{alignat}{3}
\nonumber
&v\parentheses{t,X} &\; = &\;
	V\parentheses{t,X}
	+
	\frac{\mu}{\sqrt{d}} \int_{\mathbb{R}^{N-1}}^{}
		\Lambda\parentheses{t,z,y} \;
		u_{0}\parentheses{x-z} \;
	dz \\
	\label{v-sol-FR} 
	&&&\hspace{26.593mm}+ \frac{\mu \, \nu}{\sqrt{d}} \int_{0}^{t}
		\int_{\mathbb{R}^{N-1}}^{}
			\Lambda\parentheses{s,z,y} \;
			V|_{y=0}\parentheses{t-s,x-z} \;
		dz \,
	ds,\\[10pt]
\label{u-sol-FR}
&u\parentheses{t,x} &\; = &\
	e^{-\mu t} U\parentheses{t,x}
	+
	\nu \int_{0}^{t}
		e^{-\mu\parentheses{t-s}}
		\int_{\mathbb{R}^{N-1}}^{}
		\GRD\parentheses{t-s , x-z} \;
		v|_{y=0}\parentheses{s,z} \;
		dz \,
	ds,
\end{alignat}
where
\begin{itemize}
	\item $V=V(t,X)$ is the solution to the  Cauchy problem
\begin{equation}\label{eq_pour_V}
\left\{
\begin{array}{ll}
	\partial_{t} V = d \Delta V,
&\quad t>0, \; x \in \R^{N-1}, \; y>0, \vspace{4pt}\\
	\nu V|_{y=0} - d \partial_{y} V|_{y=0} = 0,
&\quad t>0, \; x \in \R^{N-1}, \vspace{4pt}\\
	V|_{t=0} = v_{0},
&\quad \hspace{11.225mm} x \in \R^{N-1}, \; y>0,
\end{array}
\right.
\end{equation}
\item $U=U(t,x)$ is the solution to the  Cauchy problem
\begin{equation}\label{eq_pour_U}
\left\{
\begin{array}{ll}
	\partial_{t} U = D \Delta U,
&\quad t>0, \; x \in \R^{N-1}, \vspace{4pt}\\
	U|_{t=0} = u_{0},
&\quad \hspace{11.225mm} x \in \R^{N-1},
\end{array}
\right.
\end{equation}
	\item $\GRD=\GRD\parentheses{t,x}$ denotes the $\parentheses{N-1}$-dimensional Heat kernel with diffusion $D$, that is
\begin{equation}
\label{heat-kernel}
\GRD\parentheses{t,x}
\=
\frac{1}{\parentheses{4\pi Dt}^{\frac{N-1}{2}}}
e^{-\frac{\vertii{x}{}^{2}}{4Dt}},
\end{equation}
	\item and $\Lambda=\Lambda(t,x,y)$ is defined as
\begin{equation}
\label{def:Lambda}
\Lambda\parentheses{t,x,y} \=
\frac{e^{-\frac{y^{2}}{4dt}}}{\parentheses{2\pi}^{N-1}}
\int_{\mathbb{R}^{N-1}}^{}
	\Big[ a\alpha\Phi_{\alpha}+b\beta\Phi_{\beta}+c\gamma\Phi_{\gamma}\Big] \parentheses{t,\xi,y}
	\;
	e^{-dt\vertii{\xi}{}^{2} + i \xi \cdot x}
	\;
d\xi,
\end{equation}
with $\parentheses{\alpha,\beta,\gamma}=\parentheses{\alpha,\beta,\gamma}\parentheses{\xi}$ being the three complex roots of the $\delta$-indexed polynomials
\begin{equation}
\label{def:poly}
P_{\delta}\parentheses{\sigma} \=
\sigma^{3} + \frac{\nu}{\sqrt{d}} \, \sigma^{2} + \parentheses{\mu + \delta} \, \sigma + \frac{\nu \, \delta}{\sqrt{d}},
\qquad
\qquad
\text{with }\delta \= \parentheses{D-d}\vertii{\xi}{}^{2},
\end{equation}
$(a,b,c)=(a,b,c)(\xi)$ being given by
\begin{equation}\label{def:abc}
a\=\frac{1}{\parentheses{\alpha-\beta}\parentheses{\alpha-\gamma}},
\qquad\quad 
b\=\frac{1}{\parentheses{\beta-\alpha}\parentheses{\beta-\gamma}},
\qquad\quad 
c\=\frac{1}{\parentheses{\gamma-\alpha}\parentheses{\gamma-\beta}},
\end{equation}
and for $\bullet \in \ensemble{\alpha,\beta,\gamma}$,
\begin{equation}\label{def:phi}
\Phi_{\bullet}\parentheses{t,\xi,y} \=
\frac{\Erfc}{\Gamma}\Parentheses{\frac{-2\bullet\sqrt{d}\, t + y}{2\sqrt{dt}}},
\end{equation}
where $\Gamma\parentheses{\ell} \= e^{-\ell^{\,2}}$, and $\Erfc\!$ is the \textnormal{complementary error function}, whose definition is recalled in Appendix \ref{ss:appendix_ERFC}.
\end{itemize}
\end{theorem}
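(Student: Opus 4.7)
The representation \eqref{u-sol-FR} is essentially Duhamel's formula applied to the third equation of \eqref{syst}: once $v|_{y=0}$ is treated as a given source, the semigroup generated by $D\Delta-\mu\,\text{Id}$ on $\R^{N-1}$ is $e^{-\mu t}\,\GRD(t,\cdot)\ast$, which yields \eqref{u-sol-FR} directly. Hence the substantive content of the theorem lies in the derivation of \eqref{v-sol-FR}, and in the verification that the resulting coupled formulas define a classical solution.

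My strategy is to apply successively the Fourier transform in $x$ (turning $\Delta_{x}$ into multiplication by $-\vertii{\xi}{}^{2}$) and the Laplace transform in $t$. Denoting by $\widetilde{v}(s,\xi,y)$ and $\widetilde{u}(s,\xi)$ the doubly transformed densities, the first equation of \eqref{syst} becomes the ODE
\[d\,\partial_{yy}\widetilde{v}\;-\;(s+d\vertii{\xi}{}^{2})\,\widetilde{v}\;=\;-\widehat{v_{0}}(\xi,y),\qquad y>0,\]
while the third equation reduces to the algebraic relation
\[\widetilde{u}\;=\;\frac{\widehat{u_{0}}(\xi)+\nu\,\widetilde{v}|_{y=0}}{s+\mu+D\vertii{\xi}{}^{2}}.\]
Writing the bounded solution of the ODE (as $y\to+\infty$) as a particular part built on the half-line Green kernel $\frac{1}{2d\lambda}e^{-\lambda|y-\omega|}$, plus a decaying mode $A(s,\xi)\,e^{-\lambda y}$ with $\lambda\=\sqrt{s/d+\vertii{\xi}{}^{2}}$, and injecting both into the Robin boundary condition, produces a single scalar equation determining $A$.

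The key algebraic step is then the substitution $\sigma\=\sqrt{d}\,\lambda$, so that $s=\sigma^{2}-d\vertii{\xi}{}^{2}$: with $\delta\=(D-d)\vertii{\xi}{}^{2}$, the coefficient of $A$ collapses into $\sqrt{d}\,P_{\delta}(\sigma)/(\sigma^{2}+\mu+\delta)$, where $P_{\delta}$ is exactly the cubic \eqref{def:poly}. Factoring $P_{\delta}(\sigma)=(\sigma-\alpha)(\sigma-\beta)(\sigma-\gamma)$ and splitting by partial fractions with the weights \eqref{def:abc} decomposes $A$ (and hence $\widetilde{v}$) into three elementary building blocks of the form $e^{-\lambda y}/(\sigma-\bullet)$, $\bullet\in\ensemble{\alpha,\beta,\gamma}$. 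Each is inverted in the Laplace variable via classical half-line inversion identities relating $\Erfc$ to kernels of the type $e^{-\lambda y}/(\sigma-\bullet)$, producing the functions $\Phi_{\bullet}$ of \eqref{def:phi}: the denominator $\Gamma(\ell)=e^{-\ell^{2}}$ is precisely the Gaussian factor absorbing the shift $-2\bullet\sqrt{d}\,t+y$ that appears under $\Erfc$. Inverting finally the Fourier transform in $\xi$ assembles the kernel $\Lambda$ of \eqref{def:Lambda}, and disentangling the contributions coming from $\widehat{v_{0}}$ and $\widehat{u_{0}}$ recovers respectively the free evolution $V$ of \eqref{eq_pour_V} (whose trace absorbs the part of the Robin condition independent of $u$), the convolution $(\mu/\sqrt{d})\,\Lambda\ast u_{0}$, and the time-convoluted term with $\nu\,V|_{y=0}$ --- the latter reflecting that this trace acts, through the coupling, as a distributed-in-time source for $v$ propagated by the very same kernel $\Lambda$.

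Uniqueness within the class of bounded solutions follows from a standard energy/comparison argument for the coupled system, exploiting the signs of $\mu,\nu,d,D$ in the Robin coupling. I expect the main obstacle to be precisely the algebraic identification of the cubic $P_{\delta}$ together with the three $\Erfc$-based Laplace inversions producing $\Phi_{\alpha},\Phi_{\beta},\Phi_{\gamma}$: once these ingredients are in place, what remains is essentially bookkeeping, plus an \emph{a posteriori} direct verification that the assembled formulas \eqref{v-sol-FR}--\eqref{u-sol-FR} do satisfy \eqref{syst}--\eqref{initial_data}. A secondary technical issue is the rigorous justification of the transforms (inversion contours, Fubini) which remains tractable thanks to the Gaussian damping $e^{-dt\vertii{\xi}{}^{2}}$ visible in the final representation \eqref{def:Lambda}.
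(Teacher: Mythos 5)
Your plan follows the paper's proof essentially step for step: Fourier in $x$ and Laplace in $t$ reduce the problem to a $y$-ODE with coupled boundary data, the very same cubic $P_\delta$ emerges from combining the exchange condition with the algebraic relation for the transformed road density, partial fractions over the roots $\alpha,\beta,\gamma$ bring in the $\Erfc$-based Laplace inversion identity, and a final Fourier inversion assembles the kernel $\Lambda$. Your parametrization (full-line Green kernel in $\omega$ plus a single decaying mode $A(s,\xi)e^{-\lambda y}$) is an equivalent reshuffling of the paper's two-constant family with $C_1$ fixed by boundedness at $y\to+\infty$, and the collapse of the coefficient of $A$ into $\sqrt{d}\,P_\delta(\sigma)/(\sigma^2+\mu+\delta)$ that you claim is algebraically correct.
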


Let us give some remarks on Theorem \ref{th:heat-eq-FR-space-SOLUTION}. First, the theorem gives the unique {\em bounded} solution. Indeed, there could be {\em non-physical} solutions that change sign and grow very fast at infinity: this fact was proved for the Heat equation by Tychonoff \cite{Tyc}, see also \cite[Chapter 9]{Smo-book}. To get rid of such solutions, and so, to ask uniqueness, one needs to impose very loose  restriction on the growth at infinity; here we choose boundedness for the sake of simplicity.

Next, the Cauchy problem \eqref{eq_pour_U} is nothing else than the Heat equation in the hyperplane $\R^{N-1}$  and both the expression and the decay rate of its solution $U=U(t,x)$ are very well-known. On the other hand,  the Cauchy problem \eqref{eq_pour_V} is the Heat equation in the half-space $\R_+^N$ with Robin boundary conditions, and the understanding of its solution $V=V(t,X)$ is an important preliminary, that we recall in Section \ref{s:heat}.

Last, but not least, the \lq\lq migration-kernel'' $\Lambda=\Lambda(t,x,y)$ is the keystone to write the solution to the field-road diffusion model. Its main role is to describe the in-flux of individuals {\it in} the field {\it from} the road via the last two terms of $v$ in \eqref{v-sol-FR}, whose form  evokes a  Duhamel's formula. We may give a physical sense to $\Lambda$ by remarking that it is the \lq\lq $v$-component'' of the solution to \eqref{syst} starting from $\parentheses{v_{0},u_{0}}\equiv \parentheses{0,\frac{\sqrt{d}}{\mu}\delta_{x=0}}$. Notice also that, having in mind the acceleration phenomenon \cite{Ber-Roq-Ros-13-1, Ber-Roq-Ros-13-2} when $D>2d$, $\Lambda$ is the only term in the expression of $v$ that involves the constant $D$. However, the expression of $\Lambda(t,x,y)$ is quite intricate. As a consequence of our Fourier/Laplace transform approach, it involves an integral over $\xi \in \R^{N-1}$. Furthermore, we show in Appendix \ref{ss:appendix_P_delta} that, for almost all $\xi \in \R^{N-1}$, the three complex roots $\alpha=\alpha(\xi)$, $\beta=\beta(\xi)$ and $\gamma=\gamma(\xi)$ of the polynomials $P_\delta$ are distinct, which allows to define $a=a(\xi)$, $b=b(\xi)$ and $c=c(\xi)$ in \eqref{def:abc}. Further details will appear in Section \ref{s:fund} and \ref{s:decay} and in Appendix \ref{ss:appendix_P_delta}.

\medskip

Our second main contribution is to estimate the decay rate of the solution to the linear field-road system \eqref{syst} starting from the datum, say
\begin{equation}\label{initial_data-bis}
\left\{
\begin{array}{lll}
	v|_{t=0} = v_{0},
	&\quad X \in \mathbb{R}^{N}_{+},
	&\quad v_{0}\in L^{\infty}\parentheses{\mathbb{R}^{N}_{+}} \text{ is nonnegative, and compactly supported}, \vspace{4pt}\\
		u|_{t=0} = u_{0},
	&\quad x \in \R^{N-1},
	&\quad u_{0}\in  L^{\infty}\parentheses{\mathbb{R}^{N-1}} \text{ is nonnegative, and compactly supported}. 
\end{array}
\right.
\end{equation}
In the statement below, the notation $B\lesssim B'$ means there is a constant $k=k(N,d,D,\nu,\mu)>0$ such that $B\leq k B'$. 

\begin{theorem}[Decay rate of the solutions to the \textit{linear} field-road system]\label{th:decay}
Assume $D> d$ and let $\parentheses{v,u}$ be the bounded solution to the Cauchy problem \eqref{syst}\?--\,\eqref{initial_data-bis}. Then 
\begin{equation}\label{decay-v}
\vertii{v\parentheses{t,\point}}{L^{\infty}\parentheses{\mathbb{R}^{N}_{+}}}
\lesssim
\frac{\Cvzero \ln\parentheses{1+t} + \Cvuzero }{(1+t)^{\frac{N}{2}}},
\qquad \forall t > 0,
\end{equation}
\begin{equation}\label{decay-u}
\vertii{u\parentheses{t,\point}}{L^{\infty}\parentheses{\mathbb{R}^{N-1}}}
\lesssim
\frac{\Cvzero \ln\parentheses{1+t} + \Cvuzero }{(1+t)^{\frac{N}{2}}},
\qquad \forall  t> 0,
\end{equation}
for some nonnegative constant $\Cvzero$ depending only on $v_0$, and some nonnegative constant $\Cvuzero$ \break depending on both  $v_0$ and $u_0$.
\end{theorem}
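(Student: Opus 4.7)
The plan is to exploit the explicit representation formulas \eqref{v-sol-FR}--\eqref{u-sol-FR} provided by Theorem \ref{th:heat-eq-FR-space-SOLUTION} and to reduce the decay of $\|v(t,\cdot)\|_{L^\infty}$ and $\|u(t,\cdot)\|_{L^\infty}$ to a small number of building-block estimates. The three ``heat-type'' ingredients $U$, $V$ and $\GRD$ are classical, so the real work concentrates on the migration kernel $\Lambda$ and on the way the remaining pieces are stitched together by convolutions in $x$ and in $t$.

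First I would dispose of the easy blocks. Since $U$ solves the Heat equation on $\R^{N-1}$ with a compactly supported datum $u_0$, the standard sup-norm estimate gives $\|U(t,\cdot)\|_{L^\infty(\R^{N-1})}\lesssim (1+t)^{-(N-1)/2}$; combined with the prefactor $e^{-\mu t}$, the first term of \eqref{u-sol-FR} decays exponentially in $t$. The Cauchy problem \eqref{eq_pour_V} for $V$ is the Heat equation in the half-space with a Robin boundary condition, and it admits a reflection-type representation (the content of Section \ref{s:heat}) leading to $\|V(t,\cdot)\|_{L^\infty(\R^{N}_{+})}\lesssim (1+t)^{-N/2}$ together with a comparable bound for the trace $V|_{y=0}$.

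The main obstacle is a sharp estimate on $\Lambda$: the target would be
\begin{equation*}
\sup_{y\geq 0}\bigl\|\Lambda(t,\cdot,y)\bigr\|_{L^\infty(\R^{N-1})}+\sup_{y\geq 0}\bigl\|\Lambda(t,\cdot,y)\bigr\|_{L^1(\R^{N-1})}\;\lesssim\; (1+t)^{-N/2},
\end{equation*}
or at worst a variant tailored to the convolutions in \eqref{v-sol-FR}--\eqref{u-sol-FR}. The starting point is the Fourier representation \eqref{def:Lambda}. The bracket is a rational-analytic function of the three roots $\alpha,\beta,\gamma$ of the cubic $P_\delta$, whose dependence on $\xi$ enters through $\delta=(D-d)\|\xi\|^2$. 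In the low-frequency regime $|\xi|\to 0$ one may use the factorization $P_0(\sigma)=\sigma\bigl(\sigma^2+\tfrac{\nu}{\sqrt{d}}\sigma+\mu\bigr)$ together with a perturbative expansion for the small root, while the high-frequency regime requires the opposite asymptotic. The Gaussian factor $e^{-dt\|\xi\|^2}$ supplies the rate $(1+t)^{-(N-1)/2}$ after integration in $\xi$, and the boundary factor $e^{-y^2/(4dt)}$ combined with the decay of $\Phi_\alpha,\Phi_\beta,\Phi_\gamma$ should provide the remaining half power. The truly delicate point is to show that the quantity $a\alpha\Phi_\alpha+b\beta\Phi_\beta+c\gamma\Phi_\gamma$ stays bounded uniformly in $\xi$: the apparent singularities of $a,b,c$ at coalescences of the roots must cancel through the symmetric combination, which is precisely where Appendix \ref{ss:appendix_P_delta} becomes essential.

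With these blocks available, the final step is routine assembly by Young-type convolution estimates. For the spatial convolutions $\Lambda\ast u_0$ and $\GRD\ast v|_{y=0}$, the compact support --- hence finite $L^1$ mass --- of the data preserves the rate $(1+t)^{-N/2}$ and contributes to the constant $\Cvuzero$, as does the bare $V$ term depending on $v_0$. The logarithmic factor arises exclusively from the time convolution in the third term of \eqref{v-sol-FR} (and similarly in the corresponding term of \eqref{u-sol-FR}), where one is led to the model integral
\begin{equation*}
\int_0^t (1+s)^{-N/2}(1+t-s)^{-N/2}\,ds \;\lesssim\; (1+t)^{-N/2}\ln(1+t),\qquad N\geq 2.
\end{equation*}
Since $V|_{y=0}$ is controlled by $v_0$ alone, the coefficient of $\ln(1+t)$ depends only on $v_0$ --- this is precisely $\Cvzero$ --- while the remaining $(1+t)^{-N/2}$ contributions depending on both initial data combine into $\Cvuzero$. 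I expect essentially all of the difficulty to lie in the Fourier and complex-analytic study of $\Lambda$; the subsequent packaging is careful but elementary.
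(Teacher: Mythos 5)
Your overall strategy coincides with the paper's: represent $v$ via \eqref{v-sol-FR}, isolate the easy block $V$, reduce the study of the last two terms to a uniform bound on the kernel $\Phi=a\alpha\Phi_\alpha+b\beta\Phi_\beta+c\gamma\Phi_\gamma$ handling the coalescing roots via the Appendix~\ref{ss:appendix_P_delta} analysis, and finally combine. You correctly identify the bound on $\Phi$ as the heart of the matter; that is essentially Lemma~\ref{lem:controle-L-infty-Phi}.

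There is, however, a genuine gap in your ``final step is routine assembly by Young-type convolution estimates.'' The model integral
$\int_0^t (1+s)^{-N/2}(1+t-s)^{-N/2}\,ds$
that you propose for the third term of \eqref{v-sol-FR} does not come out of any Young pairing of $\Lambda(s,\cdot,y)$ against $V|_{y=0}(t-s,\cdot)$. Pairing $L^\infty\times L^1$ uses $\sup_y\|\Lambda(s,\cdot,y)\|_{L^\infty(\R^{N-1})}\lesssim s^{-(N-1)/2}(1+s)^{-1/2}$ (the Heat-kernel short-time singularity is unavoidable) times $\|V|_{y=0}(t-s,\cdot)\|_{L^1(\R^{N-1})}\lesssim (1+t-s)^{-1}$: the resulting $s$-integral diverges at $s=0$ as soon as $N\geq 3$. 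Pairing $L^1\times L^\infty$ uses $\|\Lambda(s,\cdot,y)\|_{L^1(\R^{N-1})}\lesssim (1+s)^{-1/2}$ (the $L^1$ norm of the $x$-Gaussian is $O(1)$, so there is no $(1+s)^{-N/2}$ here) against $\|V|_{y=0}(t-s,\cdot)\|_{L^\infty}$, and the convolution near $s=t$ then only gives $(1+t)^{-1/2}$. Neither pairing, nor an interpolated one, produces the stated $(1+t)^{-N/2}\ln(1+t)$. Moreover your model integral, even taken at face value, only yields a logarithm for $N=2$; for $N\geq 3$ it gives $(1+t)^{-N/2}$ with no log, which is not what the computation actually produces.

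What the proof actually requires, and what the paper does, is to open up \emph{both} $\Lambda(s,\cdot,y)$ and the inner representation of $V|_{y=0}(t-s,\cdot)$ on the $x$-Fourier side and use the semigroup identity $e^{-ds\|\xi\|^2}\,e^{-d(t-s)\|\xi\|^2}=e^{-dt\|\xi\|^2}$: the Gaussian factors over the two sub-intervals collapse into one Gaussian over the full time $t$, contributing a single factor $t^{-(N-1)/2}$ uniformly in $s$ (no short-time singularity and no loss at $s=t$). The residual time integral is then $\int_0^t (1+s)^{-1/2}(1+t-s)^{-1}\,ds\lesssim \ln(1+t)/\sqrt{1+t}$, with exponents $1/2$ (from $\mathcal S(s)$) and $1$ (from the one-dimensional Robin decay of $H_\alpha^{(1)}(t-s,0,\cdot)$); these exponents are independent of $N$, which is why the logarithm is present for all $N\geq 2$. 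This combination step is not ``routine packaging'' around the $\Phi$ bound; it is where the algebraic structure of the field-road kernel is actually used, and without it one does not reach the rate $(1+t)^{-N/2}$ for $N\geq 3$.

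A secondary, minor point: your bound $\|V(t,\cdot)\|_{L^\infty}\lesssim (1+t)^{-N/2}$ is an understatement of the Robin half-space rate $(1+t)^{-(N+1)/2}$ from Theorem~\ref{th:heat-eq-half-space}$(\ref{HE-Half-Space-Robin})$; this is harmless for the endpoint but matters in the convolution accounting above, where the precise exponent $1$ for the $\omega$-component enters.
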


The starting point to prove this upper estimate is the explicit expression of $v$ and $u$ given  in Theorem \ref{th:heat-eq-FR-space-SOLUTION}. The main difficulty is to control the \lq\lq migration-kernel'' $\Lambda$ defined in \eqref{def:Lambda}\,---\,\eqref{def:phi} which, as explained above, is the core of the field-road model. To do so, we shall rely on a rather technical result, namely Lemma \ref{lem:controle-L-infty-Phi}, whose proof needs a full understanding of the $\delta$-indexed polynomials $P_{\delta}$ in \eqref{def:poly}, achieved in Appendix \ref{ss:appendix_P_delta}. Let us emphasize that the expression of $\Lambda$ we own, namely \eqref{def:Lambda}\,---\,\eqref{def:phi}, is highly conditioned by the fact that, at some point, we  use a decomposition into partial fractions to match up with known forms of Laplace-transforms. This decomposition makes appear the \MAT{artefacts} $a$, $b$ and $c$ which blow up as the roots of $P_{\delta}$ collide for singular values of $\delta$,  see \eqref{def:abc}. Fortunately, due to the compensating structure of $a\alpha\Phi_{\alpha}+b\beta\Phi_{\beta}+c\gamma\Phi_{\gamma}$ and as confirmed by the technical Lemma \ref{lem:controle-L-infty-Phi}, these singularities are \lq\lq artificial''. 

By following the proof in Section \ref{s:decay}, one can track the values of $\Cvzero$ and $\Cvuzero$ and check, in particular, that $v_{0}\equiv 0$ implies $\Cvzero=0$. As a result, if there is initially no individual in the field, the estimate of the decay rate becomes of the magnitude $\mathcal{O}((1+t)^{-N/2})$, which corresponds to the decay rate of the Heat equation in the half space $\R^N_+$ with Neumann boundary conditions --- see Theorem \ref{th:heat-eq-half-space}. This is consistent with the fact that the boundary conditions of  the field-road model are of the exchange type so that, roughly speaking, individuals are \lq\lq not lost'' but \lq\lq stored'' in the road. On the other hand, if $v_0\not \equiv 0$, our estimate of the decay rate becomes of the magnitude $\mathcal{O}((1+t)^{-N/2}\ln(1+t))$, which is very slightly \lq\lq less good''. We believe that the actual decay rate is always of the magnitude $\mathcal{O}((1+t)^{-N/2})$: to deal with the terms $P(t,x,y)$ and $Q(t,x,y)$ appearing in \eqref{truc}, we rely on the uniform estimate \eqref{controle-L-infty-Phi} of  the rather intricate function $\Phi\parentheses{t,\xi,y}$ appearing in Lemma \ref{lem:controle-L-infty-Phi}. The use of this uniform estimate is the cause of the logarithmic term appearing later, but refined estimates seem delicate to reach.  Notice however that, in order to study the Fujita blow-up phenomenon, the logarithmic term should only play a (bad) role in the so-called {\it critical case}.

\medskip

The organization of the present paper is as follows. In Section \ref{s:heat}, we recall some facts about the fundamental solution of the Heat equation in a half-space, that will prove useful in the sequel. In particular, we give a complete derivation of the expression of the fundamental solution in the case of Robin boundary conditions, thus introducing the strategy of proof we will employ to study the complete system \eqref{syst} in Section \ref{s:fund}, where we compute the explicit form of the solutions of the linear field-road system, while their decay rate is estimated in Section \ref{s:decay}. Last, in Section \ref{s:num}, we present some numerical simulations which not only \lq\lq validate'' Theorem \ref{th:decay} but also explore some open problems related to the level sets of the solution, or to the sign of the {\it flux} of individuals from the road to the field.

%%%%%%%%%%%%%%%%%%%%%%%%%%%%%%%%%%%%%%%%%%%%%%%%%%%%%%%%%%%%%%%%%%%%%%%%%%
%%%%%%%%%%%%%%%%%%%%%%%%%%%%%%%%%%%%%%%%%%%%%%%%%%%%%%%%%%%%%%%%%%%%%%%%%%
%%%%%%%%%%%%%%%%%%%%%%%%%%%%%%%%%%%%%%%%%%%%%%%%%%%%%%%%%%%%%%%%%%%%%%%%%%

\section{The Heat equation in the half-space}\label{s:heat}

As an important preliminary, we discuss in this section the Heat equation  in the half-space, in particular the explicit expression and the decay rate of the $L^\infty$ norm of the solution to the associated Cauchy problem with different boundary conditions. These facts are very classical for the Neumann and Dirichlet boundary conditions, but the details for the case of Robin boundary conditions are not so easy to find. We therefore take this opportunity to present the main arguments, by using a Fourier/Laplace transform strategy, which we use again in the next section to deal with the whole field-road system \eqref{syst}.

\medskip

Let $N \geq 1$ be an integer. We recall that a generic point $X\in \R^{N}$ will be written ${X=(x,y)}$ with $x\in \R^{N-1}$ and $y\in \R$. For the \lq\lq variable of integration'', we reserve the notation ${Z=(z,\omega)\in \R^N}$, with $z\in \R^{N-1}$ and $\omega \in \R$. For $d>0$, we denote the $N$-dimensional Heat kernel with diffusion $d$,
\begin{equation}
\label{heat-kernel-bis}
G(t,X)
\=
\frac{1}{(4\pi dt)^{\frac{N}{2}}}
e^{-\frac{\vertii{X}{}^{2}}{4dt}}
=\frac{1}{(4\pi dt)^{\frac{N}{2}}}e^{-\frac{\vertii{x}{}^{2}+y^{2}}{4dt}},
\end{equation}
and finally, we denote the upper half-space $\R^{N}_{+} \= \R^{N-1} \times (0,+\infty)$, its boundary being the hyperplane ${\partial\R^N_{+} = \R^{N-1}\times\{0\}}$. 

For  $0\leq \alfa \leq 1$, we thus consider the Cauchy problem 
\begin{equation}\label{eq-heat}
\left\{
\begin{array}{ll}
	\partial_{t} v = d \Delta v,
&\quad t>0, \; X \in \R_{+}^{N}, \vspace{4pt}\\
	\alfa v + \parentheses{1-\alfa} d \partial_{n} v = 0,
&\quad t>0, \; X \in \partial \R^{N}_{+},\vspace{4pt}\\
	v|_{t=0} = v_{0},
&\quad \hspace{11.225mm} X \in \R_{+}^{N},
\end{array}
\right.
\end{equation}
where, say,  $v_{0}\in  L^\infty(\R_+^N)$. Here $n \= (0_{\R^{N-1}},-1)$ and the boundary condition may be recast
$$
\hspace{3.165mm}
\alfa v - \parentheses{1-\alfa} d \partial_{y} v = 0,
\quad \hspace{3.165mm} t>0, \; X \in \partial \R^{N}_{+}.
$$

In this section, the notation $B \lesssim B'$ means there is a constant $k=k(N,d,\alfa)>0$ such that $B\leq k B'$.

\begin{theorem}[Heat equation in the half-space]\label{th:heat-eq-half-space} The solution to the Cauchy problem \eqref{eq-heat} is
\begin{equation}
\label{v-sol}
v(t,X) =
\int_{\R^{N}_+}
	H_\alfa (t,X,Z) \, v_{0}(Z) \,
dZ,
\end{equation}
for some $H_{\alfa}(t,X,Z)$ precised below.
\begin{itemize}
\refstepcounter{HEHalfSpace}\label{HE-Half-Space-Neumann}
\item [$(\theHEHalfSpace)$] When $\alfa=0$ (Neumann case), we have
$$
H_{0}(t,X,Z) =
G(t,x-z,y-\omega)
+
G(t,x-z,y+\omega),
$$
and
$$
\vertii{v(t,\point)}{{L^{\infty}(\mathbb{R}^{N}_{+})}}
\lesssim
	\frac  {\int_{\R_+^N }\vert v_{0}(Z) \vert \, dZ}  {t^{\frac{N}{2}}},
\qquad \forall t>0.
$$
\refstepcounter{HEHalfSpace}\label{HE-Half-Space-Dirichlet}
\item [$(\theHEHalfSpace)$] When $\alfa=1$ (Dirichlet case), we have
$$
H_{1}(t,X,Z) = G(t,x-z,y-\omega) - G(t,x-z,y+\omega),
$$
and
$$
\vertii{v(t,\point)}{{L^{\infty}(\mathbb{R}^{N}_{+})}}
\lesssim
	\frac  {\int_{\R_+^N }\omega \, \vert v_{0}(Z)\vert  \, dZ}  {t^{\frac{N+1}{2}}},
\qquad \forall t>0.
$$
\refstepcounter{HEHalfSpace}\label{HE-Half-Space-Robin}
\item [$(\theHEHalfSpace)$] When $0<\alfa<1$ (Robin case), we have 
\begin{align}
\hspace{-1cm}
H_\alfa(t,X,Z)
	&= H_{0}(t,X,Z)
	-2\sqrt{\pi}A\sqrt{dt} \,
	G(t,x-z,y+\omega)\,
	\frac{\Erfc}{\Gamma}
	\Parentheses{\frac{2Adt+y+\omega}{2\sqrt{dt}}},
	\label{H-robin-zero}\\
	&\hspace*{-1.1804cm} =
	H_{1}(t,X,Z)
	+2G(t,x-z,y+\omega)
	\Parentheses{
	1-\sqrt{\pi}A\sqrt{dt}\,
	\frac{\Erfc}{\Gamma}
	\Parentheses{\frac{2Adt+y+\omega}{2\sqrt{dt}}}
	            },\label{H-robin-un}
\end{align}
where
$$
A \= \frac{\alfa}{d\parentheses{1-\alfa}},
$$
$\Gamma(\ell) \= e^{-\ell^{\,2}}$, and the definition of $\Erfc\!$ is recalled in Appendix \ref{ss:appendix_ERFC}. Furthermore,
\begin{equation}\label{decay-robin}
\vertii{v(t,\point)}{{L^{\infty}(\mathbb{R}^{N}_{+})}}
\lesssim
	\frac  {\int_{\R_+^N}(1+\omega)\vert v_{0}(Z)\vert\, dZ}  {t^{\frac{N+1}{2}}},
\qquad \forall t>0,
\end{equation}
and
\begin{equation}\label{decay-robin-bis}
\vertii{v(t,\point)}{{L^{\infty}(\mathbb{R}^{N}_{+})}}
\lesssim
	\frac{\CvzeroSHARP}{(1+t)^{\frac{N+1}{2}}},
\qquad \forall t> 0,
\end{equation}
where $\CvzeroSHARP \= \left(\vertii{v_0}{{L^{\infty}(\mathbb{R}^{N}_{+})}}^{\frac{2}{N+1}}+\left(\int_{\R_+^N}(1+\omega)\vert v_{0}(Z)\vert\, dZ\right)^{\frac{2}{N+1}}\right)^{\frac{N+1}{2}}$\hspace{-0.1cm}.
\end{itemize}
\end{theorem}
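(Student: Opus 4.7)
The plan is to dispatch the Neumann and Dirichlet cases by the classical reflection principle and to treat the Robin case by a Fourier--Laplace strategy, which will serve as the template for the analogous but substantially more intricate treatment of the full field-road system in Section \ref{s:fund}. For Neumann, $H_0$ is a sum of Heat kernels; it solves the equation termwise, satisfies $\partial_y H_0|_{y=0}=0$ by the symmetry $\omega\mapsto-\omega$, and converges to $\delta_{X-Z}$ as $t\to 0^+$, with the $L^\infty$ decay following immediately from $\verti{H_0}\lesssim t^{-N/2}$. For Dirichlet, $H_1$ is the odd reflection; writing $H_1=G_{N-1}(t,x-z)\bigl(g_1(t,y-\omega)-g_1(t,y+\omega)\bigr)$ with $g_1$ the one-dimensional Heat kernel and applying the mean value theorem together with the elementary bound $\sup_{\zeta\in\R}\verti{\zeta}e^{-\zeta^2/(4dt)}\lesssim\sqrt{t}$ gives $\verti{g_1(t,y-\omega)-g_1(t,y+\omega)}\lesssim\omega/t$, whence the claimed $\omega/t^{(N+1)/2}$ decay.

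For the Robin case, I would take the $x$-Fourier transform, then the $t$-Laplace transform, of \eqref{eq-heat}. For each fixed $\xi$ this reduces the problem to the ODE $\partial_{yy}\tilde{\hat v}-\lambda^2\tilde{\hat v}=-\hat v_0/d$ with $\lambda=\sqrt{s/d+\vertii{\xi}{}^2}$, supplemented by the Robin condition $\partial_y\tilde{\hat v}|_{y=0}=A\tilde{\hat v}|_{y=0}$ and decay at $y=+\infty$. A standard matching construction of the Green's function for the source $\delta_\omega$ yields
\[
K_\lambda(y,\omega)=\frac{1}{2d\lambda}\bigl(e^{-\lambda\verti{y-\omega}}+e^{-\lambda(y+\omega)}\bigr)-\frac{A}{d\lambda(\lambda+A)}e^{-\lambda(y+\omega)}.
\]
The first bracket is the Fourier--Laplace image of the Neumann kernel $H_0$, via the identity $\mathcal{L}_t\{G_1(t,\zeta)e^{-d\vertii{\xi}{}^2 t}\}(s)=\tfrac{1}{2d\lambda}e^{-\lambda\verti{\zeta}}$. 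For the correction, I invoke the classical table entry
\[
\mathcal{L}^{-1}\Bigl[\tfrac{e^{-k\sqrt{s+\mu}}}{\sqrt{s+\mu}\,(\sqrt{s+\mu}+b)}\Bigr](t)=e^{-\mu t}\,e^{b^2 t+bk}\,\Erfc\bigl(b\sqrt{t}+\tfrac{k}{2\sqrt{t}}\bigr),
\]
with $\mu=d\vertii{\xi}{}^2$, $b=A\sqrt{d}$, $k=(y+\omega)/\sqrt{d}$. Introducing $z:=(2Adt+y+\omega)/(2\sqrt{dt})$ so that $z^2=A^2dt+A(y+\omega)+(y+\omega)^2/(4dt)$, the exponential prefactor $e^{A^2dt+A(y+\omega)}$ produced by the table formula combines with the $\xi$-inversion $e^{-d\vertii{\xi}{}^2 t}\mapsto G_{N-1}(t,x-z)$ and with $e^{-(y+\omega)^2/(4dt)}/\sqrt{4\pi dt}$ to reconstruct the factor $2\sqrt{\pi dt}\,G(t,x-z,y+\omega)\,e^{z^2}\Erfc(z)$, giving \eqref{H-robin-zero}. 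The alternative form \eqref{H-robin-un} is immediate from $H_0-H_1=2G(t,x-z,y+\omega)$.

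For the decay estimates I would work from \eqref{H-robin-un}. The $H_1$ contribution is bounded by the Dirichlet estimate. For the bracketed correction $B:=1-\sqrt{\pi}A\sqrt{dt}\,e^{z^2}\Erfc(z)$ I use the algebraic decomposition
\[
B=\frac{y+\omega}{2\sqrt{dt}\,z}+\frac{A\sqrt{dt}}{z}\bigl(1-\sqrt{\pi}z\,e^{z^2}\Erfc(z)\bigr),
\]
the classical inequalities $0\leq 1-\sqrt{\pi}z\,e^{z^2}\Erfc(z)\leq\min(1,\tfrac{1}{2z^2})$ for $z>0$, and the bound $z\geq A\sqrt{dt}$, yielding $B\lesssim(y+\omega)/t+1/t$. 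Multiplying by $G(t,x-z,y+\omega)=G_{N-1}(t,x-z)g_1(t,y+\omega)$ and exploiting the sup estimates $g_1(t,\zeta)\verti{\zeta}\lesssim 1$ and $g_1(t,\zeta)\lesssim 1/\sqrt{t}$ gives $\sup_X 2\verti{G\cdot B}\lesssim 1/t^{(N+1)/2}$ for $t\geq 1$. Combined with $\verti{H_1}\lesssim\omega/t^{(N+1)/2}$, this yields \eqref{decay-robin} for $t\geq 1$; the range $t\leq 1$ is handled by the raw bound $\verti{H_\alfa}\lesssim t^{-N/2}\leq t^{-(N+1)/2}$, since then $(1+\omega)/t^{(N+1)/2}\geq 1/t^{(N+1)/2}$. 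Finally, \eqref{decay-robin-bis} follows by elementary interpolation between \eqref{decay-robin} and the maximum principle inequality $\Vertii{v(t,\cdot)}{L^\infty}\leq\Vertii{v_0}{L^\infty}$: writing $M_0=\Vertii{v_0}{L^\infty}$ and $M_1=\int_{\R^N_+}(1+\omega)\verti{v_0(Z)}dZ$, raising each bound to the power $\alpha=2/(N+1)$ and summing gives $\Vertii{v(t,\cdot)}{L^\infty}^\alpha(1+t)\lesssim M_0^\alpha+M_1^\alpha$, hence the stated $\CvzeroSHARP$.

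The main obstacle is the Laplace inversion producing the precise $\Erfc$-structure of the correction in the Robin case: matching the generic table formula to the exact form \eqref{H-robin-zero} requires a careful, but purely algebraic, repackaging of exponential prefactors via the identity for $z^2$. This step is the warm-up for the substantially more involved Laplace inversion in the field-road system, where a cubic polynomial with three competing roots (and the companion partial fraction decomposition \eqref{def:abc}) replaces the single linear factor $\sqrt{s+\mu}+b$ appearing here.
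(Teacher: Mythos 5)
Your proposal is correct and follows essentially the same route as the paper: reflection arguments for Neumann/Dirichlet, and a Fourier-in-$x$/Laplace-in-$t$ reduction to a one-dimensional ODE for the Robin case, with the same Green's function $K_\lambda$, the same table entry $\Lb^{-1}\bigl[\frac{e^{-k\sqrt{s}}}{\sqrt{s}(\sqrt{s}+b)}\bigr]$ and delay shift, and the same splitting of the decay estimate into an $H_1$ part and a bracketed correction. The only (benign) divergence is in how the correction is bounded: you use the exact identity $1 - \sqrt{\pi}A\sqrt{dt}\,e^{z^2}\Erfc(z) = \frac{y+\omega}{2\sqrt{dt}\,z} + \frac{A\sqrt{dt}}{z}\bigl(1-\sqrt{\pi}z e^{z^2}\Erfc(z)\bigr)$ together with $0\le 1-\sqrt{\pi}z e^{z^2}\Erfc(z)\le\min(1,\tfrac{1}{2z^2})$, whereas the paper uses the cruder bound $\sqrt{\pi}\,\frac{\Erfc}{\Gamma}(\ell)\ge\frac{1}{1+\ell}$ followed by an optimization in $r=y+\omega$; both give the same $t^{-(N+1)/2}$ rate.
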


This theorem is rather classical, in particular points $(i)$ and $(ii)$. A possible way to prove it is to use some continuation arguments. For instance, to prove $(\ref{HE-Half-Space-Neumann})$, one defines $\widetilde{v}(t,X)$ as the solution of the Heat equation in the whole of $\R^N$ arising from the initial datum $$
\widetilde{v}_{0}(x,y) \=
\begin{cases}
	v_{0}(x,y)  	& \text{ if } y>0, \\
	v_{0}(x,-y) 	& \text{ if } y<0.
\end{cases}
$$
Then, by uniqueness of the (bounded) solutions of the Heat equation, $\widetilde{v}$ remains even with respect to $y$,  is $\mathcal{C}^{\infty}$ in $\R^N\times(0,+\infty)$ by usual regularity results on parabolic equations. Thus it satisfies $\partial_{y} \widetilde{v}(t,x,0)=0$ for all $t>0$ and all $x\in \R^{N-1}$. Hence, $v(t,x,y)$ stands as the restriction of $\widetilde{v}(t,x,y)$ to the upper-half-space, and we have
\begin{align*}
 	v(t,X)
	&= \int_{\R^N} G(t,X-Z) \, \widetilde{v}_{0}(Z) \, dZ\\
 	&= \int_{\R^N_+}
		\Big[G(t,x-z,y-\omega) + G(t,x-z,y+\omega)\Big] \,
		v_{0}(z,\omega) \,
	   dZ,
\end{align*}
from which we deduce the expression of $H_{0}$ and the decay rate. The point $(\ref{HE-Half-Space-Dirichlet})$ can be proven in the same way by considering this time, for $\widetilde{v}_{0}$, the odd-continuation (with respect to $y$) of $v_{0}$.
A more tricky (and less known) continuation argument  can be used to reach the point $(\ref{HE-Half-Space-Robin})$. However, in view of the sequel, let us present now the detailed proof of $(\ref{HE-Half-Space-Robin})$ using rather a Fourier/Laplace transform strategy.

In the following, the $x$-Fourier transform of $u$ shall be denoted by $\hat{u}$, the $t$-Laplace one by $\frown{u}$ and the $x$-Fourier/$t$-Laplace one by $\frownhat{u}$. We refer to Appendix \ref{ss:appendix_Transforms} for conventions and notations related to these transforms.

\begin{proof} [Proof of Theorem \ref{th:heat-eq-half-space}, $(\ref{HE-Half-Space-Robin})$] 
 Applying the $t$-Laplace transform to the first equation in \eqref{eq-heat}, namely $\partial_t v(t,x,y)=d(\Delta_x+\partial_{yy}) v(t,x,y)$,  yields the elliptic PDE
$$
d\Parentheses{\Delta_{x}\frown{v}(s,x,y) + \partial_{yy} \frown{v}(s,x,y)}- s\frown{v}(s,x,y)
= - v_{0}(x,y),
\qquad (x,y)\in\mathbb{R}^{N}_{+},
$$
where $s>0$ acts as a parameter.  Then by applying the $x$-Fourier transform, we reach the linear second order ODE
\begin{equation}\label{3_2_ODE_sur_frownhat_v}
d \partial_{yy} \frownhat{v}\parentheses{s,\xi,y}-
	\Parentheses{s+d\vertii{\xi}{}^{2}}\frownhat{v}\parentheses{s,\xi,y}
	=
	-\hat{\,v_{0}}\parentheses{\xi ,y},
\qquad y>0,
\end{equation}
where $s>0$ and $\xi\in \R^{N-1}$ act as parameters. Setting $\sigma \= \sqrt{s+d\vertii{\xi}{}^{2}}$, solutions of \eqref{3_2_ODE_sur_frownhat_v} may be written
\begin{multline*}
\frownhat{v}\parentheses{s,\xi,y} =
	e^{\frac{\sigma}{\sqrt{d}} \, y}
	\Parentheses{
	C_1-\frac{1}{2\sqrt{d}\sigma}
	\int_{0}^{y}
		e^{-\frac{\sigma}{\sqrt{d}} \, \omega} \, \hat{\,v_{0}}\parentheses{\xi,\omega}
	d\omega
	            }
\\	
	+
	e^{-\frac{\sigma}{\sqrt{d}} \, y}
	\Parentheses{
	C_2+\frac{1}{2\sqrt{d}\sigma}
	\int_{0}^{y}
		e^{\frac{\sigma}{\sqrt{d}} \, \omega} \, \hat{\,v_{0}}\parentheses{\xi,\omega}
	d\omega
	            },
\end{multline*}
where $C_1$ and $C_2$ are constants (with respect to $y$) to be determined. In order to insure ${\lim\limits_{y\rightarrow +\infty}\frownhat{v}\parentheses{s,\xi ,y}=0}$, one needs
\begin{equation}\label{def:B}
C_{1} =
\frac{1}{2\sqrt{d}\sigma}
	\int_{0}^{+\infty}
		e^{-\frac{\sigma}{\sqrt{d}}\omega} \, \hat{\,v_{0}}\parentheses{\xi,\omega}	d\omega.
\end{equation}
Next, since
$$
\frownhat{v}\parentheses{s,\xi,0} = C_{1}+C_{2}
\qquad\quad\text{and}\quad\qquad 
\partial_{y} \frownhat{v}\parentheses{s,\xi,0} = \frac{\sigma}{\sqrt{d}}\parentheses{C_1-C_2},
$$
the Robin boundary conditions enforce
\begin{equation}
\label{def:C}
C_2
=
\Parentheses{1 - \frac{2A\sqrt{d}}{\sigma + A\sqrt{d}}} C_1.
\end{equation}
This brings us to the following \textit{explicit} expression for $\frownhat{v}$:
$$
\frownhat{v}\parentheses{s,\xi,y} =
\frac{1}{2\sqrt{d}}
\int_{0}^{+\infty}
	\Parentheses{
		\frac{e^{-\frac{\sigma}{\sqrt{d}}\verti{y-\omega}}}{\sigma}
		+
		\frac{e^{-\frac{\sigma}{\sqrt{d}}\parentheses{y+\omega}}}{\sigma}
		-
		2A\sqrt{d}
		\frac{e^{-\frac{\sigma}{\sqrt{d}}\parentheses{y+\omega}}}{\sigma\parentheses{\sigma+A\sqrt{d}}}
		}
	\hat{v_{0}}\parentheses{\xi,\omega}
d\omega,
$$
where we recognize known forms of $t$-Laplace transforms evaluated at $\sigma^2=s+d\vertii{\xi}{}^{2}$ --- see Lemma \ref{lem:inverse-Laplace-Fourier}, $(\ref{A1-Laplace-E_sur_sig})$ and $(\ref{A1-Laplace-E_sur_sig_fois_sig_plus_b})$ --- so that
\begin{multline*}
\frownhat{v}\parentheses{s,\xi,y} =
\frac{1}{2\sqrt{d}}
\int_{0}^{+\infty}
	{\Lb}\Bigg[
		t\mapsto
		\frac{e^{-\frac{\parentheses{y-\omega}^{2}}{4dt}}}{\sqrt{\pi t}}
		+
		\frac{e^{-\frac{\parentheses{y+\omega}^{2}}{4dt}}}{\sqrt{\pi t}}
\\		
		-2A\sqrt{d}
		\frac{\Erfc}{\Gamma}\Parentheses{\frac{2Adt+y+\omega}{2\sqrt{dt}}}
		e^{-\frac{\parentheses{y+\omega}^{2}}{4dt}}
	      \Bigg]
	      \parentheses{s+d\vertii{\xi}{}^{2}}
	\;\,\hat{v_{0}}\parentheses{\xi,\omega}
d\omega.
\end{multline*}
Using Lemma \ref{lem:propri-Laplace-Fourier} $(\ref{A2-Delay_th})$, we can now get rid of the $t$-Laplace transform and get
\begin{align*}
\hat{v}\parentheses{t,\xi,y} &= \frac{1}{2\sqrt{d}}
\int_{0}^{+\infty}
\Bigg[
	\frac{e^{-\frac{\parentheses{y-\omega}^{2}}{4dt}}}{\sqrt{\pi t}}
	+
	\frac{e^{-\frac{\parentheses{y+\omega}^{2}}{4dt}}}{\sqrt{\pi t}}
		\Parentheses{1-2\sqrt{\pi}A\sqrt{dt}
		\frac{\Erfc}{\Gamma}\Parentheses{\frac{2Adt+y+\omega}{2\sqrt{dt}}}
		}
\Bigg]
\\
	&\hspace{104.195mm}e^{-dt\vertii{\xi}{}^{2}}
 \, \hat{v_{0}}\parentheses{\xi,\omega}
d\omega.
\end{align*}
Finally, note that
$$
e^{-dt\vertii{\xi}{}^{2}} \, \hat{v_{0}}\parentheses{\xi,\omega}
=
{\Fb}\crochets{\frac{e^{-\frac{\vertii{\point}{}^{2}}{4dt}}}{\Parentheses{4\pi dt}^{\frac{N-1}{2}}}} \!\! \parentheses{\xi} \times {\Fb}\crochets{v_{0}\parentheses{\point,\omega}}\!\parentheses{\xi}
=
{\Fb}\crochets{\frac{e^{-\frac{\vertii{\point}{}^{2}}{4dt}}}{\Parentheses{4\pi dt}^{\frac{N-1}{2}}} \ast v_{0}\parentheses{\point,\omega}}\!\!\parentheses{\xi},
$$
whence we can also get rid of the $x$-Fourier transform and reach
\begin{align}\label{3_2_explicit_v}
\hspace{-2mm}v\parentheses{t,x,y} \! &= \!\!
\int_{\mathbb{R}^{N}_{+}}^{}\! \!
	\crochets{
		\frac{e^{-\frac{\vertii{x-z}{}^{2}}{4dt}}}{\Parentheses{4\pi dt}^{\frac{N-1}{2}}}
		\Parentheses{
		\frac{e^{-\frac{\parentheses{y-\omega}^{2}}{4dt}}}{\sqrt{4\pi dt}}
	+
		\frac{e^{-\frac{\parentheses{y+\omega}^{2}}{4dt}}}{\sqrt{4\pi dt}}
		\Parentheses{1-2\sqrt{\pi}A\sqrt{dt}
		\frac{\Erfc}{\Gamma}\Parentheses{\frac{2Adt+y+\omega}{2\sqrt{dt}}}
		}}
		}
\nonumber\\
	&\hspace{110.573mm} v_{0}\parentheses{Z} \;
dZ.
\end{align}
Expressions \eqref{H-robin-zero} and \eqref{H-robin-un} of $H_{\alfa}$ are immediately deduced from \eqref{3_2_explicit_v}.

%Contrôle du H_{\alfa} :
We now turn to the decay rate. It is readily seen that
$$
-\sqrt{\pi} \, \frac{\Erfc}{\Gamma}(\ell) \leq - \frac{1}{1+\ell },
\qquad \forall \ell \geq 0.
$$
As a consequence, for all $t>0$, all $y \geq 0$, and all $\omega \geq 0$,$$
-\sqrt{\pi} \, \frac{\Erfc}{\Gamma}\Parentheses{\frac{2Adt+y+\omega}{2\sqrt{dt}}}
\leq
-\frac{2\sqrt{dt}}{2Adt+2\sqrt{dt}+y+\omega}.
$$
Using this into \eqref{H-robin-un}, we see that, for all $t>0$, all $X\in\R^{N}_{+}$, and all $Z\in\R^{N}_{+}$,
\begin{eqnarray}
0 < H_{\alfa}(t,X,Z) - H_{1}(t,X,Z)
	&\leq&
		2 G(t,x-z,y+\omega)
		\,
		\frac {2\sqrt{dt}+y+\omega} {2Adt+2\sqrt{dt}+y+\omega}
		\nonumber\\
	&\lesssim&
		\frac{m(t)}{t^{\frac{N}{2}}},\label{presque}
\end{eqnarray}
where 
$$
m(t) \= \sup_{r\geq 0} 
	\Parentheses{
		\frac{2\sqrt{dt}+r}{2Adt+2\sqrt{dt}+r}
		\,
		e^{-\frac{r^{2}}{4dt}}
	}.
$$
An elementary analysis shows that
$$
m(t)
= \max_{0\leq r \leq \sqrt{dt}} 
	\Parentheses{
		\frac{2\sqrt{dt}+r}{2Adt+2\sqrt{dt}+r}
		\,
		e^{-\frac{r^{2}}{4dt}}
	}
\leq
\max_{0\leq r \leq \sqrt{dt}} 
	\Parentheses{
		\frac{2\sqrt{dt}+r}{2Adt+2\sqrt{dt}+r}
	}
%=
%\frac{3}{2A\sqrt{dt}+3}
\lesssim
	\frac{1}{\sqrt{t}}.
$$
From this, \eqref{presque}, and the case $(\ref{HE-Half-Space-Dirichlet})$, we obtain the decay rate as stated in \eqref{decay-robin}. Combining \eqref{decay-robin} and the fact that $\vertii{v(t,\point)}{{L^{\infty}(\mathbb{R}^{N}_{+})}}\leq \vertii{v_0}{{L^{\infty}(\mathbb{R}^{N}_{+})}}$, one can check that \eqref{decay-robin-bis} holds true.
\end{proof}

Let us conclude this section with the following direct consequence of Theorem \ref{th:heat-eq-half-space}.

\begin{corollary}\label{cor:split}
For any $0\leq \theta \leq 1$, we have 
$$
H_{\alfa}\parentheses{t,X,Z} =
\frac{e^{-\frac{\vertii{x-z}{}^{2}}{4dt}}}{\Parentheses{4\pi dt}^{\frac{N-1}{2}}}
\times
\HalfaONE\parentheses{t,y,\omega},
$$
where $\HalfaONE$ denotes $H_{\alfa}$ for $N=1$.
\end{corollary}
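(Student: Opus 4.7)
The plan is to read the factorization straight off the explicit expressions for $H_\alfa$ furnished by Theorem \ref{th:heat-eq-half-space}. The key observation is that the $N$-dimensional Heat kernel defined in \eqref{heat-kernel-bis} splits as a tensor product,
\begin{equation*}
G(t,x-z,y\pm\omega) \; = \; \frac{e^{-\frac{\vertii{x-z}{}^{2}}{4dt}}}{\parentheses{4\pi dt}^{\frac{N-1}{2}}} \; \times \; \frac{e^{-\frac{\parentheses{y\pm\omega}^{2}}{4dt}}}{\sqrt{4\pi dt}},
\end{equation*}
where the second factor is precisely the $1$-dimensional Heat kernel. Hence the common $(N-1)$-dimensional Gaussian prefactor $\frac{e^{-\vertii{x-z}{}^{2}/(4dt)}}{\parentheses{4\pi dt}^{\parentheses{N-1}/2}}$ can be factored out of every term appearing in the formula for $H_\alfa$.

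I would split the verification into the three cases $\theta=0$, $\theta=1$, and $0<\theta<1$. For $\theta=0$, the expression of $H_0$ in Theorem \ref{th:heat-eq-half-space} $(\ref{HE-Half-Space-Neumann})$ immediately yields the factorization, with the remaining factor being $H_0^{\scriptscriptstyle(1)}(t,y,\omega)=\bigl(e^{-(y-\omega)^{2}/(4dt)}+e^{-(y+\omega)^{2}/(4dt)}\bigr)/\sqrt{4\pi dt}$, which is exactly $H_0$ in dimension $N=1$. The case $\theta=1$ is analogous, starting from the expression of $H_1$ in $(\ref{HE-Half-Space-Dirichlet})$. For $0<\theta<1$, I would plug the splitting of $G$ into either \eqref{H-robin-zero} or \eqref{H-robin-un}: the complementary error function and all the remaining scalars depend only on $t$, $y$, $\omega$ and the parameters $A$, $d$, so they sit entirely inside the $1$-dimensional factor, which one recognizes as $H_\alfa$ in dimension $N=1$.

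In each case I would conclude by identifying the resulting one-variable factor with $\HalfaONE\parentheses{t,y,\omega}$ as defined in the statement. There is essentially no obstacle here: the corollary is a structural consequence of the tensor-product nature of the Heat kernel on $\R^{N-1}\times\R$ combined with the fact that the Robin boundary condition acts only in the $y$-direction, so the construction carried out in the proof of Theorem \ref{th:heat-eq-half-space} $(\ref{HE-Half-Space-Robin})$ respects this tensorization. The only point requiring a small check is that the constant $A=\alfa/\bigl(d(1-\alfa)\bigr)$ is independent of $N$, which ensures that the one-dimensional factor is literally $H_\alfa$ in dimension $1$ and not a dimension-dependent variant.
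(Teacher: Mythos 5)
Your proposal is correct and matches the paper's intended reasoning exactly: the paper offers no written proof, stating the corollary as a "direct consequence" of Theorem \ref{th:heat-eq-half-space}, and the tensorization of $G$ together with the observation that the Robin correction (including $A$ and the $\Erfc/\Gamma$ factor) depends only on $t,y,\omega$ is precisely the one-line argument the authors have in mind.
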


%%%%%%%%%%%%%%%%%%%%%%%%%%%%%%%%%%%%%%%%%%%%%%%%%%%%%%%%%%%%%%%%%%%%%%%%%%
%%%%%%%%%%%%%%%%%%%%%%%%%%%%%%%%%%%%%%%%%%%%%%%%%%%%%%%%%%%%%%%%%%%%%%%%%%
%%%%%%%%%%%%%%%%%%%%%%%%%%%%%%%%%%%%%%%%%%%%%%%%%%%%%%%%%%%%%%%%%%%%%%%%%%

\section{The solution to the field-road model Cauchy problem}\label{s:fund}

In this section, we explicitly obtain the bounded solution to the Cauchy problem \eqref{syst}\,--\,\eqref{initial_data}. Before starting, we recall that the $x$-Fourier transform of $u$ is denoted by $\hat{u}$, the $t$-Laplace one by $\frown{u}$ and the $x$-Fourier/$t$-Laplace one by $\frownhat{u}$. We refer to Appendix \ref{ss:appendix_Transforms} for conventions and notations related to these transforms.

\begin{proof} [Proof of Theorem \ref{th:heat-eq-FR-space-SOLUTION}] 
We start with establishing expression \eqref{u-sol-FR} for $u=u(t,x)$ which is the simplest to get. To do so, we apply the $t$-Laplace and then the $x$-Fourier transforms to  the third equation in \eqref{syst}, namely $\partial_{t}u\parentheses{t,x} = D\Delta_{x} u\parentheses{t,x} + \nu v|_{y=0}\parentheses{t,x} - \mu u\parentheses{t,x}$. This yields the algebraic relation
$$
-D \vertii{\xi}{}^{2} \; \frownhat{u}\parentheses{s,\xi}
+ \nu \; \frownhat{v}\?\?|_{y=0}\parentheses{s,\xi}
-\mu \; \frownhat{u}\parentheses{s,\xi}
	=
- \hat{\,u_{0}}\parentheses{\xi} + s \; \frownhat{u}\parentheses{s,\xi},
\qquad s>0, \; \xi\in\mathbb{R}^{N-1},
$$
which is recast
\begin{equation}
\label{u-frownhat}
\frownhat{u}\parentheses{s,\xi} =
\frac{\hat{\,u_{0}}\parentheses{\xi} + \nu \; \frownhat{v}\?\?|_{y=0}\parentheses{s,\xi}}{\mu+s+D\vertii{\xi}{}^{2}},
\qquad s>0, \; \xi\in\mathbb{R}^{N-1}.
\end{equation}
It is now possible to reach \eqref{u-sol-FR} from \eqref{u-frownhat} by using properties and known forms of $t$-Laplace and $x$-Fourier transforms --- see Lemmas \ref{lem:inverse-Laplace-Fourier}, $(\ref{A1-Fourier-Gaussienne})$ and $(\ref{A1-Laplace-un-sur-a-plus-s})$, and \ref{lem:propri-Laplace-Fourier}, $(\ref{A2-Fourier-convolee})$ and $(\ref{A2-Laplace-convolee})$:
\begin{align*}
 	\frownhat{u}\parentheses{s,\xi}
 	&={\Lb}\crochets{t\mapsto e^{-\parentheses{\mu + D \vertii{\xi}{}^{2}} t} \, \hat{\,u_{0}}\parentheses{\xi}}\parentheses{s}
 	+ \nu {\Lb}\crochets{t\mapsto e^{-\parentheses{\mu+D\vertii{\xi}{}^{2}}t}}\parentheses{s} \times {\Lb}\crochets{t\mapsto\hat{v}\?|_{y=0}\parentheses{t,\xi}}\parentheses{s}\\
 	&={\Lb}\crochets{t\mapsto e^{-\parentheses{\mu + D \vertii{\xi}{}^{2}} t} \, \hat{\,u_{0}}\parentheses{\xi}}\parentheses{s}
 	+ \nu {\Lb}\crochets{t\mapsto \int_{0}^{t} e^{-\parentheses{\mu+D\vertii{\xi}{}^{2}}\parentheses{t-\tau}} \; \hat{v}\?|_{y=0}\parentheses{\tau,\xi} \, d\tau}\parentheses{s}\\
 	&={\Lb}{\Fb}\crochets{\parentheses{t,x} \mapsto e^{-\mu t}\crochets{u_{0}\ast \GRD\parentheses{t,\point}}\parentheses{x}}\parentheses{s,\xi}\\ 	&\hspace{29.710mm}+\nonumber \nu {\Lb}{\Fb}\crochets{\parentheses{t,x}\mapsto \int_{0}^{t} e^{-\mu\parentheses{t-\tau}}\crochets{v|_{y=0}\parentheses{\tau,\point}\ast\GRD\parentheses{t-\tau,\point}}\parentheses{x} \, d\tau}\parentheses{s,\xi}\\
 	&={\Lb}{\Fb}\crochets{\parentheses{t,x} \mapsto e^{-\mu t}\Parentheses{U\parentheses{t,x} + \nu \int_{0}^{t} e^{\mu\tau} \int_{\mathbb{R}^{N-1}}^{} \GRD\parentheses{t-\tau,x-z}v|_{y=0}\parentheses{\tau,z} \, dz \, d\tau}}\parentheses{s,\xi}
\end{align*}
which provides \eqref{u-sol-FR}.

We now turn to the expression \eqref{v-sol-FR} for $v=v(t,X)$. For convenience let us set
\begin{equation}
\label{def:sigma}
\sigma \= \sqrt{s+d\vertii{\xi}{}^{2}}
\qquad\quad 
\text{and}
\qquad\quad 
\Sigma \= \sqrt{s+D\vertii{\xi}{}^{2}}.
\end{equation}
As in Section \ref{s:heat}, applying $t$-Laplace then $x$-Fourier transforms on the first equation in \eqref{syst}, namely $\partial_{t}v\parentheses{t,x,y}=d\parentheses{\Delta_{x}+\partial_{yy}}v\parentheses{t,x,y}$, leads to the linear second order ODE (where $s>0$ and $\xi\in \R^{N-1}$ serve as parameters)
\begin{equation}\label{4_ODE_sur_frownhat_v}
d \partial_{yy} \frownhat{v}\parentheses{s,\xi,y}-
	\Parentheses{s+d\vertii{\xi}{}^{2}}\frownhat{v}\parentheses{s,\xi,y}
	=
	-\hat{\,v_{0}}\parentheses{\xi ,y},
\qquad y>0,
\end{equation}
whose solutions are
\begin{multline*}
\frownhat{v}\parentheses{s,\xi,y} =
	e^{\frac{\sigma}{\sqrt{d}} \, y}
	\Parentheses{
	C_{1}-\frac{1}{2\sqrt{d}\sigma}
	\int_{0}^{y}
		e^{-\frac{\sigma}{\sqrt{d}} \, \omega}\hat{\,v_{0}}\parentheses{\xi,\omega}
	d\omega
	            }
\\	
	+
	e^{-\frac{\sigma}{\sqrt{d}} \, y}
	\Parentheses{
	C_{2}+\frac{1}{2\sqrt{d}\sigma}
	\int_{0}^{y}
		e^{\frac{\sigma}{\sqrt{d}} \, \omega}\hat{\,v_{0}}\parentheses{\xi,\omega}
	d\omega
	            },
\end{multline*}
where $C_{1}$ and $C_{2}$ are constants (with respect to $y$) to be determined. In order to insure ${\lim\limits_{y\rightarrow +\infty}\frownhat{v}\parentheses{s,\xi ,y}=0}$, one needs
\begin{equation}
\label{def:C1}
C_{1} =
\frac{1}{2\sqrt{d}\sigma}
	\int_{0}^{+\infty}
		e^{-\frac{\sigma}{\sqrt{d}}\omega}\hat{\,v_{0}}\parentheses{\xi,\omega}	d\omega.
\end{equation}
Then, calling $A \= \frac{\nu}{d}$ and $B \= \frac{\mu}{d}$, the exchange condition --- second equation in \eqref{syst} --- is recast $Av|_{y=0}\parentheses{t,x} - \partial_{y}v|_{y=0}\parentheses{t,x}=Bu\parentheses{t,x}$; and since
$$
\frownhat{v}\parentheses{s,\xi,0} = C_{1}+C_{2}
\qquad\quad
\text{and}
\qquad\quad
\partial_{y} \frownhat{v}\parentheses{s,\xi,0} = \frac{\sigma}{\sqrt{d}}\parentheses{C_{1}-C_{2}},
$$
the exchange condition enforces
\begin{align}
 	C_{2} &= \Parentheses{1 - \frac{2A\sqrt{d}}{\sigma + A\sqrt{d}}} C_{1} + \frac{B\sqrt{d}}{\sigma+A\sqrt{d}} \; \frownhat{u} (s,\xi) \nonumber \\
 	&= \Parentheses{1 - \frac{2A\sqrt{d}}{\sigma + A\sqrt{d}}} C_{1} + B\sqrt{d} \, \frac{\hat{\,u_{0}}\parentheses{\xi} + \nu \; \frownhat{v}\?\?|_{y=0}\parentheses{s,\xi}}{\parentheses{\Sigma^{2}+\mu}\parentheses{\sigma+A\sqrt{d}}},\label{def:C2}
\end{align}
where we used the expression \eqref{u-frownhat} for $\frownhat{u}(s,\xi)$ and the definition of $\Sigma$ in \eqref{def:sigma}. Comparing \eqref{def:B}, \eqref{def:C} with \eqref{def:C1}, \eqref{def:C2}, we see that the \lq\lq deviation'' of $\,\frownhat{v}(s,\xi,y)$ (of the field-road model) from the solution to the Heat equation in the half-space (see Section \ref{s:heat}) stands in the second term in the right-hand side of \eqref{def:C2}. Hence, using the computations that have been performed in Section \ref{s:heat} and calling $V=V(t,X)$ the solution to \eqref{eq_pour_V}, we reach an \textit{implicit} expression for $\frownhat{v}(s,\xi,y)$, namely
\begin{equation}\label{v-frown-hat-implicit}
\frownhat{v}\parentheses{s,\xi,y} =
\frownhat{\? V}\parentheses{s,\xi,y} +
B\sqrt{d} \,
e^{-\frac{\sigma}{\sqrt{d}}y} \,
\frac{\hat{\,u_{0}}\parentheses{\xi} + \nu \; \frownhat{v}\?\?|_{y=0}\parentheses{s,\xi}}{\parentheses{\Sigma^{2}+\mu}\parentheses{\sigma+A\sqrt{d}}}.
\end{equation}
Evaluating now \eqref{v-frown-hat-implicit} at $y=0$ yields
$$
\frownhat{v}\?\?|_{y=0}\parentheses{s,\xi} =
\frownhat{\? V}|_{y=0}\parentheses{s,\xi} +
B\sqrt{d} \,
\frac{\hat{\,u_{0}}\parentheses{\xi} + \nu \; \frownhat{v}\?\?|_{y=0}\parentheses{s,\xi}}{\parentheses{\Sigma^{2}+\mu}\parentheses{\sigma+A\sqrt{d}}},
$$
whence
\begin{multline}\label{v-frown-y-equal-0}
\frownhat{v}\?\?|_{y=0}\parentheses{s,\xi} =
\frac{B\sqrt{d}}{\parentheses{\Sigma^{2}+\mu}\parentheses{\sigma+A\sqrt{d}}-\nu B\sqrt{d}}
\; \hat{\,u_{0}}\parentheses{\xi}\\
+
\frac{\parentheses{\Sigma^{2}+\mu}\parentheses{\sigma+A\sqrt{d}}}{\parentheses{\Sigma^{2}+\mu}\parentheses{\sigma+A\sqrt{d}}-\nu B\sqrt{d}}
\; \frownhat{\? V}|_{y=0}\parentheses{s,\xi}.
\end{multline}
Plugging \eqref{v-frown-y-equal-0} into \eqref{v-frown-hat-implicit} now yields the \textit{explicit} expression 
\begin{equation}\label{v-frown-hat-explicit}
\frownhat{v}\parentheses{s,\xi,y} =
\frownhat{\? V}\parentheses{s,\xi,y}
+ B\sqrt{d}
\frac{e^{-\frac{\sigma}{\sqrt{d}}y}}{\parentheses{\Sigma^{2}+\mu}\parentheses{\sigma+A\sqrt{d}}-\nu B\sqrt{d}}
\crochets{\hat{\,u_{0}}\parentheses{\xi} + \nu \frownhat{\? V}|_{y=0}\parentheses{s,\xi}}.
\end{equation}
Our task is now to take the inverse Fourier/Laplace transform of  the second term in the above right-hand-side. Letting
$$
\delta \= \parentheses{D-d}\vertii{\xi}{}^{2},
$$
one has $\Sigma^{2} = \sigma^{2}+\delta$. Then define
\begin{equation}
\label{return}
\frownhat{\Lambda}\parentheses{s,\xi,y} \=
\frac{e^{-\frac{\sigma}{\sqrt{d}}y}}{\parentheses{\Sigma^{2}+\mu}\parentheses{\sigma+A\sqrt{d}}-\nu B\sqrt{d}}
=
\frac{e^{-\frac{\sigma}{\sqrt{d}}y}}{\sigma^{3} + A\sqrt{d}\sigma^{2} + \parentheses{\mu+\delta}\sigma +A\sqrt{d}\delta}.
\end{equation}
To match up $\, \frownhat{\Lambda}(s,\xi,y)$ with known $t$-Laplace transforms, we need to expand \eqref{return} into partial fractions, which requires a study of the polynomials
$$
P_{\delta}\parentheses{\sigma} \= \sigma^{3} + A\sqrt{d}\sigma^{2} + \parentheses{\mu+\delta}\sigma +A\sqrt{d}\delta,
\qquad
\delta\geq 0 \;\; \parentheses{\text{since }D>d}.
$$
We show in Appendix \ref{ss:appendix_P_delta} that the three complex roots $\alpha\parentheses{\delta},\beta\parentheses{\delta},\gamma\parentheses{\delta}$ of $P_{\delta}$ are simple for almost all $\xi\in\mathbb{R}^{N-1}$. Hence, for these $\xi$,
\begin{equation}\label{Lambda-frownhat}
\frownhat{\Lambda}\parentheses{s,\xi,y} =
a \; \frac{e^{-\frac{\sigma}{\sqrt{d}}y}}{\sigma-\alpha}
+
b \; \frac{e^{-\frac{\sigma}{\sqrt{d}}y}}{\sigma-\beta}
+
c \; \frac{e^{-\frac{\sigma}{\sqrt{d}}y}}{\sigma-\gamma},
\end{equation}
where $a$, $b$ and $c$ have been defined in \eqref{def:abc}, and do not depend on the the $s$ variable. Expression \eqref{Lambda-frownhat} of $\, \frownhat{\Lambda}$ shows up a known form of $t$-Laplace transform --- see Lemma \ref{lem:inverse-Laplace-Fourier}, $(\ref{A1-Laplace-E_sur_sig_moins_b})$ --- evaluated at $\sigma^2=s+d\vertii{\xi}{}^{2}$, namely
\begin{multline*}
\frownhat{\Lambda}\parentheses{s,\xi,y} =
{\Lb}\Bigg[
t\mapsto
e^{-\frac{y^{2}}{4dt}}
\Bigg(
\frac{a+b+c}{\sqrt{\pi t}}
+ a\alpha \, \frac{\Erfc}{\Gamma}\Parentheses{\frac{-2\alpha\sqrt{d}\, t + y}{2\sqrt{dt}}}
+ b\beta \, \frac{\Erfc}{\Gamma}\Parentheses{\frac{-2\beta\sqrt{d}\, t + y}{2\sqrt{dt}}}\\
+ c\gamma \, \frac{\Erfc}{\Gamma}\Parentheses{\frac{-2\gamma\sqrt{d}\, t + y}{2\sqrt{dt}}}
\Bigg)
\Bigg]
\parentheses{s+d\vertii{\xi}{}^{2}}.
\end{multline*}
Observing that $a+b+c=0$, using Lemma \ref{lem:propri-Laplace-Fourier}, $(\ref{A2-Delay_th})$, and recalling the notation $\Phi_{\bullet}$ defined in statement of Theorem \ref{th:heat-eq-FR-space-SOLUTION}, see \eqref{def:phi}, we thus reach
\begin{equation}\label{presque_Lambda_frownhat}
\frownhat{\Lambda}\parentheses{s,\xi,y} =
{\Lb}\crochets{
t\mapsto 
e^{-\frac{y^{2}}{4dt}}
\Big[ a\alpha\Phi_{\alpha}+b\beta\Phi_{\beta}+c\gamma\Phi_{\gamma}\Big] \parentheses{t,\xi,y} \;
e^{-dt\vertii{\xi}{}^{2}}
}
\parentheses{s}.
\end{equation}
We know from Lemma \ref{lem:inverse-Laplace-Fourier}, $(\ref{A1-Fourier-Gaussienne})$, that $e^{-dt\vertii{\xi}{}^{2}}={\Fb}\crochets{\GRd\parentheses{t,\point}}\parentheses{\xi}$ --- notice that here we do mean $\GRd$, and not $\GRD$, see \eqref{heat-kernel}. Denoting 
$$
\Phi \= a\alpha\Phi_{\alpha}+b\beta\Phi_{\beta}+c\gamma\Phi_{\gamma},
$$
we also know --- see Lemma \ref{lem:propri-Laplace-Fourier}, $(\ref{A2-Fourier-inversion})$ --- that $\Phi\parentheses{t,\xi,y} = \frac{1}{\parentheses{2\pi}^{N-1}}{\Fb}\crochets{{\Fb}\crochets{\Phi\parentheses{t, - \, \point \, , y}}}\parentheses{\xi}$. Hence, from Lemma \ref{lem:propri-Laplace-Fourier}, $(\ref{A2-Fourier-convolee})$, we end up with
\begin{align}
 	\frownhat{\Lambda}\parentheses{s,\xi,y}
 	&\nonumber= {\Lb}{\Fb}\Bigg[\parentheses{t,x}\mapsto 
 	\frac{e^{-\frac{y^{2}}{4dt}}}{\parentheses{2\pi}^{N-1}}
 	\int_{\eta\in\mathbb{R}^{N-1}}^{}
 	\int_{\chi\in\mathbb{R}^{N-1}}^{}
 	\Phi \parentheses{t,\chi,y} \;
 	\GRd\parentheses{t,x-\eta} \;
	e^{i\chi\cdot\eta}
 	d\chi
 	d\eta
 	\Bigg]
 	\parentheses{s,\xi}\\
 	&\nonumber\hspace{-6.747mm} = {\Lb}{\Fb}\Bigg[\parentheses{t,x}\mapsto 
 	\frac{e^{-\frac{y^{2}}{4dt}}}{\parentheses{2\pi}^{N-1}}
 	\int_{\chi\in\mathbb{R}^{N-1}}^{}
 	\Phi \parentheses{t,\chi,y} \;
 	e^{i\chi\cdot x}
 	\int_{\eta\in\mathbb{R}^{N-1}}^{}
 	\GRd\parentheses{t,x-\eta} \;
	e^{-i\chi\cdot\parentheses{x-\eta}}
 	d\eta \;
 	d\chi
 	\Bigg]
 	\parentheses{s,\xi}\\
 	&\label{Lambda}\hspace{-6.747mm} = {\Lb}{\Fb}\Bigg[\parentheses{t,x}\mapsto 
 	\frac{e^{-\frac{y^{2}}{4dt}}}{\parentheses{2\pi}^{N-1}}
 	\int_{\chi\in\mathbb{R}^{N-1}}^{}
 	\Phi \parentheses{t,\chi,y} \;
 	e^{-dt\vertii{\chi}{}^{2}+i\chi\cdot x}
 	d\chi
 	\Bigg]
 	\parentheses{s,\xi}.
\end{align}
In other words, and as expected, we do have
$$
\frownhat{\Lambda}\parentheses{s,\xi,y}= {\Lb}{\Fb}\bigg[\parentheses{t,x}\mapsto
 	\Lambda\parentheses{t,x,y}
 	\bigg]
 	\parentheses{s,\xi},
$$
where $\Lambda=\Lambda(t,x,y)$ has been introduced in the statement of Theorem \ref{th:heat-eq-FR-space-SOLUTION}, see \eqref{def:Lambda}. Finally, our last move consists in using Lemma \ref{lem:propri-Laplace-Fourier}, ($\ref{A2-Fourier-convolee}$) and ($\ref{A2-Laplace-convolee}$),  in \eqref{v-frown-hat-explicit}, leading us, knowing $\Lambda$, to
\begin{multline*}
\frownhat{v}\parentheses{s,\xi,y} =
{\Lb}{\Fb}\Bigg[\parentheses{t,x}\mapsto
V\parentheses{t,x,y}
+ B\sqrt{d} \int_{\mathbb{R}^{N-1}}^{}
\Lambda\parentheses{t,z,y} \;
u_{0}\parentheses{x-z}
dz\\
+ \nu B\sqrt{d}
\int_{0}^{t}
\int_{\mathbb{R}^{N-1}}^{}
\Lambda\parentheses{\tau,z,y} \;
V|_{y=0}\parentheses{t-\tau , x-z}
dz
d\tau
\Bigg]
\parentheses{s,\xi}
\end{multline*}
which provides \eqref{v-sol-FR}.
\end{proof}

\begin{remark}[When $D\leq d$]\label{rem:D-egal-d} Let us first discuss the case $D<d$. Then, when $\xi$ browses $\R^{N-1}$, $\delta=\parentheses{D-d}\vertii{\xi}{}^{2}$ browses $\intervalleof{-\infty}{0}$. As in Appendix \ref{ss:appendix_P_delta}, it can be shown that, for almost all $\delta \leq 0$ (and thus for almost all $\xi \in \R^{N-1}$), the roots of $P_\delta$ are simple. We thus can write again \eqref{Lambda-frownhat} and the above proof readily applies.

We now turn on the critical case $D=d$. When $\xi$ browses $\R^{N-1}$, $\delta=\parentheses{D-d}\vertii{\xi}{}^{2}$ remains stuck  at zero --- so that $\Phi(t,\xi,y)$ is actually independent on $\xi$ --- and only  the polynomial $P_{0}$ plays a role. If $\mu \neq \frac{\nu^2}{4d}$, then $P_{0}$ has three simple roots (see Figure \ref{figure-Roots} and Appendix \ref{ss:appendix_P_delta}) and the above proof again readily applies. On the other hand, if $\mu=\frac{\nu^2}{4d}$, then $P_{0}$ has a simple root and a double root. In this case, one should go back to \eqref{return} and, rather than \eqref{Lambda-frownhat}, use the adequate expansion into partial fractions. Details are omitted.
\end{remark}

\MAT{
\begin{remark}[Adding linear growth]\label{rem:linear-growth} One may also want to consider the case with linear growth both in the field and on the road.  If, in the purely diffusive system \eqref{syst}, $+pv$ and $+qu$ are added to the $v$-equation and the $u$-equation respectively, then by considering
$(\widetilde v (t,x),\widetilde u(t,x)) \= e^{-qt}(v(t,x),u(t,x))$,
the system is recast (after dropping the tildes and defining $r\= p-q$)
$$
\left\{
\begin{array}{ll}
	\partial_{t} v = d \Delta v+rv,
&\quad t>0, \; x \in \R^{N-1}, \; y>0, \vspace{4pt}\\
	- d \partial_{y} v|_{y=0} = \mu u - \nu v|_{y=0},
&\quad t>0, \; x \in \R^{N-1}, \vspace{4pt}\\
	\partial_{t} u = D \Delta u + \nu v|_{y=0} - \mu u,
&\quad t>0, \; x \in \R^{N-1},
\end{array}
\right.
$$
which is the linearized system around the null steady state of the Fisher-KPP system \eqref{syst-non-lin} originally introduced in \cite{Ber-Roq-Ros-13-1, Ber-Roq-Ros-13-2}. Then the first change in the proof is that \eqref{4_ODE_sur_frownhat_v} is transferred into
\begin{equation}\label{4_ODE_sur_frownhat_v-AVEC-r}
d \partial_{yy} \frownhat{v}\parentheses{s,\xi,y}-
	\Parentheses{s+d\vertii{\xi}{}^{2}-r}\frownhat{v}\parentheses{s,\xi,y}
	=
	-\hat{\,v_{0}}\parentheses{\xi ,y},
\qquad y>0.
\end{equation}
Since individuals are expected to better grow in the field than on the road, we would typically have $r>0$ and, because of that, the structure of the solutions to the linear second order ODE \eqref{4_ODE_sur_frownhat_v-AVEC-r}  does depend upon parameters $s$ and $r$, and different cases should be considered. As a result our method readily applies but, to reach a rather explicit expression of the solution possibly yielding new insights, further heavy computations would be necessary.
\end{remark}
}

%%%%%%%%%%%%%%%%%%%%%%%%%%%%%%%%%%%%%%%%%%%%%%%%%%%%%%%%%%%%%%%%%%%%%%%%%%
%%%%%%%%%%%%%%%%%%%%%%%%%%%%%%%%%%%%%%%%%%%%%%%%%%%%%%%%%%%%%%%%%%%%%%%%%%
%%%%%%%%%%%%%%%%%%%%%%%%%%%%%%%%%%%%%%%%%%%%%%%%%%%%%%%%%%%%%%%%%%%%%%%%%%

\section{The decay rate of the field-road model}\label{s:decay}

In this section, we estimate the decay rate of the $L^{\infty}$ norm of the solution to the Cauchy problem \eqref{syst}\,--\,\eqref{initial_data-bis}. Owing to the parabolic comparison principle, we can assume without loss of generality that $v_0$ and $u_0$ are smooth. 

\begin{proof}[Proof of Theorem \ref{th:decay}]
We first handle the case of $v$ --- from which the one of $u$ shall easily ensue. Reminding the expression \eqref{v-sol-FR} of $v$,
\begin{multline}
v\parentheses{t,X} =
V\parentheses{t,X}
+
\frac{\mu}{\sqrt{d}}
\overbrace{\int_{\mathbb{R}^{N-1}}^{}
\Lambda\parentheses{t,z,y} \;
u_{0}\parentheses{x-z} \;
dz}^{\text{Let us call this $P\parentheses{t,x,y}$,}}\\
%%%
+
\frac{\mu \, \nu}{\sqrt{d}}
\underbrace{\int_{0}^{t}
\int_{\mathbb{R}^{N-1}}^{}
\Lambda\parentheses{s,z,y} \;
V|_{y=0}\parentheses{t-s,x-z} \;
dz \,
ds}_{\text{and that $Q\parentheses{t,x,y}$}.},\label{truc}
\end{multline}
The first term, namely $V\parentheses{t,X}$, is the solution to the Cauchy problem \eqref{eq_pour_V} whose $L^{\infty}$ control is given in statement of Theorem \ref{th:heat-eq-half-space} $(\ref{HE-Half-Space-Robin})$:\begin{equation}\label{controle_V}
\vertii{V(t,\point)}{{L^{\infty}(\mathbb{R}^{N}_{+})}}
\lesssim
	\frac  {\int_{\R_+^N}(1+\omega)\vert v_{0}(Z)\vert\, dZ}  {t^{\frac{N+1}{2}}},
\qquad \forall t>0.
\end{equation}
To deal with terms $P$ and $Q$, we will need the following lemma, whose proof is postponed at the end of this section.

\begin{lemma}[$L^{\infty}$ control of $\Phi$]\label{lem:controle-L-infty-Phi}
Recall the notation introduced in the proof of Theorem \ref{th:heat-eq-FR-space-SOLUTION}:
\begin{equation}
\label{recall-Phi}
\Phi \parentheses{t,\xi,y} \=
\Big[ a\alpha\Phi_{\alpha}+b\beta\Phi_{\beta}+c\gamma\Phi_{\gamma}\Big] \parentheses{t,\xi,y}.
\end{equation}
Then 
\begin{equation}\label{controle-L-infty-Phi}
{\Sb}\parentheses{t} \=
\;\;
\sup\limits_{y\geq 0}
\;\;
\sup\limits_{\xi\in\mathbb{R}^{N-1}}
\;\;
\verti{\Phi\parentheses{t,\xi,y}} \lesssim
\frac{1}{\sqrt{1+t}}, \qquad \forall t > 0.
\end{equation}
\end{lemma}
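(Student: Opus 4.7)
The goal is to bound $\Phi=a\alpha\Phi_{\alpha}+b\beta\Phi_{\beta}+c\gamma\Phi_{\gamma}$ uniformly in $\parentheses{\xi,y}$ by $C/\sqrt{1+t}$, despite the blow-up of $a,b,c$ when the roots of $P_{\delta}$ collide. My plan is to exploit the ``divided-difference'' nature of the sum: introducing the holomorphic function
\[
g_{t,y}\parentheses{\sigma}\=\sigma\,\frac{\Erfc}{\Gamma}\Parentheses{\frac{-2\sigma\sqrt{d}\,t+y}{2\sqrt{dt}}},
\]
formula \eqref{def:abc} together with the standard partial-fraction identity give
\[
\Phi\parentheses{t,\xi,y}=\sum_{\bullet\in\ensemble{\alpha,\beta,\gamma}}\frac{g_{t,y}\parentheses{\bullet}}{\prod_{\star\neq\bullet}\parentheses{\bullet-\star}}=\frac{1}{2\pi i}\oint_{\mathcal{C}}\frac{g_{t,y}\parentheses{\sigma}}{P_{\delta}\parentheses{\sigma}}\,d\sigma,
\]
where $\mathcal{C}$ is any simple contour encircling the three roots. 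This representation is manifestly regular when roots coalesce and realises the ``compensating structure'' announced in the introduction.

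The estimate then rests on two ingredients. First, Appendix \ref{ss:appendix_P_delta} describes the trajectories $\delta\mapsto\parentheses{\alpha,\beta,\gamma}\parentheses{\delta}$; in particular, Vieta's formulas give $\alpha+\beta+\gamma=-\nu/\sqrt{d}$ and $\alpha\beta\gamma=-\nu\delta/\sqrt{d}\le 0$, and the signs $P_{\delta}\parentheses{-\nu/\sqrt{d}}=-\nu\mu/\sqrt{d}<0$, $P_{\delta}\parentheses{0}\ge 0$ supply a real root in $\intervalleff{-\nu/\sqrt{d}}{0}$, while the Routh--Hurwitz criterion $a_{1}a_{2}=\parentheses{\mu+\delta}\nu/\sqrt{d}>\nu\delta/\sqrt{d}=a_{0}$ forces $\realpart{\alpha},\realpart{\beta},\realpart{\gamma}\le 0$ for every $\delta\ge 0$. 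Second, the Faddeeva-type ratio $F\parentheses{\ell}\=\Erfc\parentheses{\ell}/\Gamma\parentheses{\ell}$ satisfies $\verti{F^{\parentheses{k}}\parentheses{\ell}}\lesssim\parentheses{1+\verti{\ell}}^{-k-1}$ for $\realpart{\ell}\ge 0$ (the recursion $F'=-2/\sqrt{\pi}+2\ell F$ taking care of derivatives). Since each root has non-positive real part, the argument $\ell_{\bullet}\=\parentheses{-2\bullet\sqrt{d}\,t+y}/\parentheses{2\sqrt{dt}}$ fed into $F$ in $\Phi_{\bullet}$ has real part $\ge y/\parentheses{2\sqrt{dt}}\ge 0$, so these bounds apply throughout.

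With these in hand, I split the analysis in two regimes. In the non-degenerate regime where $\verti{\alpha-\beta},\verti{\beta-\gamma},\verti{\gamma-\alpha}$ are bounded below by a positive constant depending only on $\parentheses{d,D,\mu,\nu}$, a direct termwise use of $\verti{F\parentheses{\ell_{\bullet}}}\lesssim 1/\parentheses{1+\verti{\ell_{\bullet}}}$ gives $\verti{\Phi}\lesssim 1/\sqrt{t}$, the $\sqrt{t}$ in the argument of $F$ supplying the decay. In the near-collision regime (which by the appendix only occurs for $\delta$ in a bounded set), I deform $\mathcal{C}$ to a small circle around the cluster and bound the contour integral by $\sup\verti{g_{t,y}''}$ on that neighbourhood; the chain rule gives $\verti{g_{t,y}''\parentheses{\sigma}}\lesssim t\verti{F''\parentheses{\ell\parentheses{\sigma}}}+\sqrt{t}\verti{F'\parentheses{\ell\parentheses{\sigma}}}$, which, combined with the decay of $F'$ and $F''$ and the lower bound $\verti{\ell}\gtrsim\sqrt{t}$ valid in this regime, again produces $1/\sqrt{t}$. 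Uniform boundedness for small $t$ follows from $\verti{F}\le 1$ on $\realpart{\ell}\ge 0$ together with the identities $a+b+c=0$ and $a\alpha+b\beta+c\gamma=0$, which rewrite $\Phi$ as a second-order increment in $\bullet\mapsto\bullet\Phi_{\bullet}$ and absorb the singular prefactors; patching the two regimes yields the claimed $1/\sqrt{1+t}$ bound.

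The main obstacle is precisely the joint treatment of these regimes: individually, $a\alpha\Phi_{\alpha}$ blows up as soon as two roots collide, yet three orders of cancellation (encoded in $a+b+c=0$, $a\alpha+b\beta+c\gamma=0$, $a\alpha^{2}+b\beta^{2}+c\gamma^{2}=1$) keep $\Phi$ regular, and this cancellation must be made effective uniformly in $\xi\in\mathbb{R}^{N-1}$. This is where the fine root information of Appendix \ref{ss:appendix_P_delta} --- location, separation, and behaviour at small/large $\delta$ --- becomes essential, and where I expect most of the technical work of the lemma to lie.
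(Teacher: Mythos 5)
Your strategy is sound and, in substance, the same as the paper's, but you package it in a more unified and arguably cleaner way. The paper's proof is a three-way case split (simple roots, double roots appearing, triple root appearing): in step (i) it bounds each term $a\alpha\Phi_\alpha$ separately, using for small $t$ and large $\delta$ the extra geometric fact $\verti{\beta-\gamma}>\verti{\beta}$ from Lemma \ref{lem:fact_P_delta_large_values_of_delta}; in step (ii) it introduces $\psi_\lambda=\frac{\lambda(\Phi_\lambda-\Phi_\gamma)}{\lambda-\gamma}$ and applies the mean value inequality; in step (iii) it does a Taylor--Lagrange expansion to second order. Unraveling the definitions shows that $\psi_\alpha-\psi_\beta = g[\alpha,\gamma]-g[\beta,\gamma]$ and $\Phi=\frac{\psi_\alpha-\psi_\beta}{\alpha-\beta}=g[\alpha,\beta,\gamma]$ with $g(\sigma)=\sigma\frac{\Erfc}{\Gamma}\left(\frac{-2\sigma\sqrt{d}\,t+y}{2\sqrt{dt}}\right)$: the paper's steps (ii) and (iii) are exactly the computation of the second divided difference that you write as a contour integral. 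Your version has genuine advantages: the Hermite--Genocchi bound $\verti{g[\alpha,\beta,\gamma]}\leq\frac{1}{2}\sup_{\conv(\alpha,\beta,\gamma)}\verti{g''}$ applies uniformly regardless of root multiplicity, so the "double-root" and "triple-root" subcases collapse into one, and the small-$t$, large-$\delta$ regime (where termwise bounds yield the useless $1/\sqrt{t}$) is handled without the ad hoc fact $\verti{\beta-\gamma}>\verti{\beta}$, since $\verti{g''}\lesssim\sqrt{t}\verti{F'}+\verti{\sigma}\,t\,\verti{F''}\lesssim\sqrt{t}$ uniformly for $t\leq 1$. Conversely, the paper's elementary case split is more self-contained and never invokes the Hermite--Genocchi (or equivalent) identity.

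Two points in your sketch deserve tightening. First, "bound the contour integral by $\sup\verti{g_{t,y}''}$" is not a naïve residue/length estimate on a shrinking circle --- that estimate actually blows up as $1/\epsilon^{2}$ when the roots cluster at distance $\epsilon$; you must either cite Hermite--Genocchi for analytic $g$ over the convex hull or, as the paper does, use first- and second-order increments with the mean value inequality. Since $\Re\alpha,\Re\beta,\Re\gamma\leq 0$, that convex hull lies in $\{\Re\sigma\leq 0\}$, so $\Re\,\ell(\sigma)\geq 0$ on it and your $F$-bounds apply, but this should be spelled out. Second, "uniform boundedness for small $t$ follows from $\verti{F}\leq 1$" is slightly misstated: the divided-difference bound involves $g''=-2\sqrt{t}\,F'(\ell)+\sigma t\,F''(\ell)$, so it is boundedness (and, for larger $\verti{\sigma}$, the decay) of $F'$ and $F''$ that you actually use; $\verti{F}\leq 1$ alone does not suffice. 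Moreover, for the near-collision regime you quote the lower bound $\verti{\ell}\gtrsim\sqrt{t}$; that requires $\verti{\sigma}\gtrsim 1$ on the convex hull, which is exactly what Lemmas \ref{lem:fact_P_delta_triple-root}$(\theATripleRoots{})$, \ref{lem:fact_P_delta_2-double-roots}$(\theATwoDoubleRoots{})$, \ref{lem:fact_P_delta_1-double-root}$(\theAOneDoubleRoots{})$ supply --- it is worth flagging that this input from the appendix is indispensable, since the case $\delta\to 0$ (where $\alpha\to 0$) shows the divided-difference bound alone does not give the $1/\sqrt{t}$ decay for large $t$, and one must patch in the termwise bound there as you do.
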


From here, the estimate of $P$ is straight. Indeed, using expression  \eqref{def:Lambda} of $\Lambda$ as written in Theorem \ref{th:heat-eq-FR-space-SOLUTION}, we get
\begin{align*}
 	\verti{P\parentheses{t,x,y}}
 	&= \verti{\int_{z\in\mathbb{R}^{N-1}}^{}\frac{e^{-\frac{y^{2}}{4dt}}}{\parentheses{2\pi}^{N-1}}
 	\int_{\xi\in\mathbb{R}^{N-1}}^{}
 	\Phi\parentheses{t,\xi,y} \;
 	e^{-dt\vertii{\xi}{}^{2}+i \xi\cdot z} \;
 	u_{0}\parentheses{x-z} \;
 	d\xi \,
 	dz}\\
 	&\leq \frac{{\Sb}\parentheses{t}}{\parentheses{2\pi}^{N-1}} \;
 	\int_{\xi\in\mathbb{R}^{N-1}}^{}e^{-dt\vertii{\xi}{}^{2}} \, d\xi \;
 	\int_{z\in\mathbb{R}^{N-1}}^{}\verti{u_{0}\parentheses{z}}dz\\
 	&= \frac{{\Sb}\parentheses{t}}{\parentheses{4\pi dt}^{\frac{N-1}{2}}} \;
 	\int_{z\in\mathbb{R}^{N-1}}^{}\verti{u_{0}\parentheses{z}}dz.
\end{align*}
Whence, thanks to Lemma \ref{lem:controle-L-infty-Phi},
\begin{equation}\label{controle_P}
\vertii{P(t,\point)}{{L^{\infty}(\mathbb{R}^{N}_{+})}}
\lesssim
	\frac{\int_{\mathbb{R}^{N-1}}^{}\verti{u_{0}\parentheses{z}}dz}{t^{\frac{N}{2}}},
\qquad \forall t>0.
\end{equation}

Controlling $Q$ is more refined. Observe from Theorem \ref{th:heat-eq-half-space} and Corollary \ref{cor:split} that
$$
V(t,x,y) =
\int_{\eta\in\mathbb{R}^{N-1}}^{}
\int_{\omega=0}^{\infty}
	\GRd(t,x-\eta) \;
	\HalfaONE (t,y,\omega) \;
	v_{0}(\eta,\omega) \;
d\omega \,
d\eta,
$$
where $\theta \= \frac{\nu}{1+\nu}\in (0,1)$, so that
\begin{align*}
Q\parentheses{t,x,y}
 	&= \int_{s=0}^{t}
 	\int_{z\in\mathbb{R}^{N-1}}^{}\frac{e^{-\frac{y^{2}}{4ds}}}{\parentheses{2\pi}^{N-1}}
 	\int_{\xi\in\mathbb{R}^{N-1}}^{}
 	\Phi\parentheses{s,\xi,y} \;
 	e^{-ds\vertii{\xi}{}^{2}+i \xi\cdot z} \\
 	&\hspace{20.477mm} \int_{\eta\in\mathbb{R}^{N-1}}^{}\int_{\omega=0}^{\infty} 
 	\GRd\parentheses{t-s,x-z-\eta} \;
 	\HalfaONE\parentheses{t-s,0,\omega} \;
 	v_{0}\parentheses{\eta,\omega} \;
 	d\omega \,
 	d\eta \,
 	d\xi \,
 	dz \,
 	ds\\
%%%%
 	&= \int_{s=0}^{t}
 	\frac{e^{-\frac{y^{2}}{4ds}}}{\parentheses{2\pi}^{N-1}}
 	\int_{\xi\in\mathbb{R}^{N-1}}^{}
 	\Phi\parentheses{s,\xi,y} \;
 	e^{-ds\vertii{\xi}{}^{2}}
 	\int_{\eta\in\mathbb{R}^{N-1}}^{}\int_{\omega=0}^{\infty} 
 	\HalfaONE\parentheses{t-s,0,\omega} \;
 	v_{0}\parentheses{\eta,\omega} \\
 	&\hspace{62.375mm} \int_{z\in\mathbb{R}^{N-1}}^{}
 	\GRd\parentheses{t-s,x-z-\eta} \;
 	e^{i \xi\cdot z}
 	dz \,
 	d\omega \,
 	d\eta \,
 	d\xi \,
 	ds.\\
 	%%%%
 	\intertext{The integral in $z$ is nothing else than ${\Fb}\crochets{\GRd\parentheses{t-s,\point+x-\eta}}\parentheses{\xi}=e^{-d\parentheses{t-s}\vertii{\xi}{}^{2}+i\xi\cdot \parentheses{x-\eta}}$ so that}
 	%%%%
Q\parentheses{t,x,y}
 	&= \int_{s=0}^{t}
 	\frac{e^{-\frac{y^{2}}{4ds}}}{\parentheses{2\pi}^{N-1}}
 	\int_{\xi\in\mathbb{R}^{N-1}}^{}
 	\Phi\parentheses{s,\xi,y} \;
 	e^{-dt\vertii{\xi}{}^{2}}
 	\int_{\eta\in\mathbb{R}^{N-1}}^{}\int_{\omega=0}^{\infty} 
 	\HalfaONE\parentheses{t-s,0,\omega} \;
 	v_{0}\parentheses{\eta,\omega} \\
 	&\hspace{108.266mm} e^{i\xi\cdot \parentheses{x-\eta}} \,
 	d\omega \,
 	d\eta \,
 	d\xi \,
 	ds\\
 	&=\int_{s=0}^{t}
 	\frac{e^{-\frac{y^{2}}{4ds}}}{\parentheses{2\pi}^{N-1}}
 	\int_{\xi\in\mathbb{R}^{N-1}}^{}
 	\Phi\parentheses{s,\xi,y} \;
 	e^{-dt\vertii{\xi}{}^{2}} e^{i\xi\cdot x}
 	\int_{\omega=0}^{\infty} 
 	\HalfaONE\parentheses{t-s,0,\omega} \;
 	\hat{v_{0}}\parentheses{\xi,\omega} \,
 	d\omega \,
 	d\xi \,
 	ds.
\end{align*}
As a result,
$$
\verti{Q\parentheses{t,x,y}}
\leq
\frac{1}{\parentheses{2\pi}^{N-1}}
\int_{s=0}^{t}
{\Sb}\parentheses{s}
\int_{\xi\in\mathbb{R}^{N-1}}^{}
e^{-dt\vertii{\xi}{}^{2}}
 \int_{\omega=0}^{\infty} 
\HalfaONE\parentheses{t-s,0,\omega}
\verti{\?\?\?\?\hat{v_{0}}\parentheses{\xi,\omega}}
d\omega \,
d\xi \,
ds.
$$
From the control on ${\Sb}$ in Lemma \ref{lem:controle-L-infty-Phi} and estimate \eqref{decay-robin-bis} in Theorem \ref{th:heat-eq-half-space} (with $N=1$), we reach
$$
\verti{Q\parentheses{t,x,y}}
\lesssim
\TCvzero
\int_{s=0}^{t} \int_{\xi\in\mathbb{R}^{N-1}}^{}
e^{-dt\vertii{\xi}{}^{2}}
\frac{1}{\sqrt{1+s}\parentheses{1+t-s}}
d\xi\, ds
\lessim \frac{{\TCvzero}}{t^{\frac{N-1}{2}}} \int_{0}^{t} 
\frac{1}{\sqrt{1+s}\parentheses{1+t-s}}
ds,
$$
for some $\TCvzero\geq 0$. 
Finally, an elementary computation shows
$$
\int_{0}^{t}\frac{1}{\sqrt{1+s}\parentheses{1+t-s}}ds =
\frac{1}{\sqrt{2+t}} \crochets{\ln\Parentheses{\frac{1+\sqrt{\frac{1+t}{2+t}}}{1-\sqrt{\frac{1+t}{2+t}}}} - \ln\Parentheses{\frac{1+\sqrt{\frac{1}{2+t}}}{1-\sqrt{\frac{1}{2+t}}}}}
\leq \frac{\ln\parentheses{4t+6}}{\sqrt{2+t}},
$$
from which there comes
\begin{equation}\label{controle_Q}
\vertii{Q(t,\point)}{{L^{\infty}(\mathbb{R}^{N}_{+})}}
\lesssim \TCvzero
	\frac{\ln\parentheses{1+t}}{t^{\frac{N}{2}}},
\qquad \forall t>0.
\end{equation}
Gathering \eqref{controle_V}, \eqref{controle_P} and \eqref{controle_Q} brings us to the announced control \eqref{decay-v} on $v$.

\medskip

Let us now turn to $u$. We recall its expression as written in \eqref{u-sol-FR}:
$$
u\parentheses{t,x} =
e^{-\mu t} U\parentheses{t,x}
+
\nu \int_{0}^{t}
	e^{-\mu\parentheses{t-s}}
	\int_{\mathbb{R}^{N-1}}^{}
	\GRD\parentheses{t-s , x-z} \;
	v|_{y=0}\parentheses{s,z} \;
	dz \,
ds,
$$
where $U=U\parentheses{t,x}$ denotes the well-known solution to the Cauchy problem \eqref{eq_pour_U}. Hence, thanks the control \eqref{decay-v} we just established on $v$,
\begin{equation}\label{controle_u_avec_convolee_en_s}
\verti{u\parentheses{t,x}}
\lesssim
\frac{\Vert u_0\Vert_{L^1(\R^{N-1})} \, e^{-\mu t}}{t^{\frac{N-1}{2}}}+  \Cvzero \, e^{-\mu t}
 \int_{0}^{t} e^{\mu s} \, \frac{\ln\parentheses{1+s}}{\parentheses{1+s}^{\frac{N}{2}}} \; ds
+ \Cvuzero \, e^{-\mu t}
\int_{0}^{t} e^{\mu s} \, \frac{1}{\parentheses{1+s}^{\frac{N}{2}}} \; ds.
\end{equation}
We now  treat the two temporal convolutions in \eqref{controle_u_avec_convolee_en_s}. To do so, remark that
$$
\frac{d}{ds}\Parentheses{\frac{e^{\mu s}}{\mu}\,\frac{\ln\parentheses{1+s}}{\parentheses{1+s}^{\frac{N}{2}}}}
=
e^{\mu s} \, \frac{\ln\parentheses{1+s}}{\parentheses{1+s}^{\frac{N}{2}}} +
\frac{e^{\mu s}}{\mu} \, \frac{1-\frac{N}{2}\ln\parentheses{1+s}}{\parentheses{1+s}^{\frac{N+2}{2}}}
\stackrel{s\to\infty}{\sim}
e^{\mu s} \, \frac{\ln\parentheses{1+s}}{\parentheses{1+s}^{\frac{N}{2}}}$$
yields $\int_{0}^{t} e^{\mu s} \frac{\ln\parentheses{1+s}}{\parentheses{1+s}^{\frac{N}{2}}} \, ds \stackrel{t\to\infty}{\sim} \frac{e^{\mu t}}{\mu} \, \frac{\ln\parentheses{1+t}}{\parentheses{1+t}^{\frac{N}{2}}}$. Similarly,
$$
\frac{d}{ds}\Parentheses{\frac{e^{\mu s}}{\mu}\,\frac{1}{\parentheses{1+s}^{\frac{N}{2}}}}
=
e^{\mu s} \, \frac{1}{\parentheses{1+s}^{\frac{N}{2}}} -
\frac{e^{\mu s}}{\mu} \, \frac{\frac{N}{2}}{\parentheses{1+s}^{\frac{N+2}{2}}}
\stackrel{s\to\infty}{\sim}
e^{\mu s} \, \frac{1}{\parentheses{1+s}^{\frac{N}{2}}}
$$
yields $\int_{0}^{t} e^{\mu s} \frac{1}{\parentheses{1+s}^{\frac{N}{2}}} \, ds \stackrel{t\to\infty}{\sim} \frac{e^{\mu t}}{\mu} \, \frac{1}{\parentheses{1+t}^{\frac{N}{2}}} $. Therefore,
\begin{align*}
\verti{u\parentheses{t,x}}
	&\lesssim
	\frac{\Vert u_0\Vert_{L^1(\R^{N-1})} \, e^{-\mu t}}{t^{\frac{N-1}{2}}}+
	\frac{\Cvzero \ln\parentheses{1+t}}{\parentheses{1+t}^{\frac{N}{2}}} + \frac{\Cvuzero}{\parentheses{1+t}^{\frac{N}{2}}},
\end{align*}
which provides the control \eqref{decay-u} on $u$ up to changing the value of $\Cvuzero$ if necessary.
\end{proof}

It now remains  to prove Lemma \ref{lem:controle-L-infty-Phi}.

\begin{proof}[Proof of Lemma \ref{lem:controle-L-infty-Phi}]  The proof is related to the study of the $\delta$-indexed polynomials $P_{\delta}$  in Appendix \ref{ss:appendix_P_delta}. As $\delta = \parentheses{D-d}\vertii{\xi}{}^{2}$ browses $\mathbb{R}_{+}$, the polynomials $P_{\delta}$ may provide double or triple roots for, at most, two values of $\delta$ --- see Lemmas \ref{lem:fact_P_delta_only_simple_roots} to \ref{lem:fact_P_delta_1-double-root} $(\ref{ASimpleRoots-kind-of-roots})$. In Lemmas \ref{lem:fact_P_delta_only_simple_roots} to \ref{lem:fact_P_delta_1-double-root} we build a closed set ${\Eb}$ of $\R_+$ so that, for $\delta\in{\Eb}$, the roots $\alpha$, $\beta$, $\gamma$ remain away from each other and so that each connected component of ${\Eb}^{c}$ contains \textit{one and only one} $\delta_{i}$ ($i=0$, $1$ or $2$) which provides a multiple root $\lambda_{i}$. This proof is divided into three steps:
\begin{itemize}
	\item[($i$)] We treat the cases where $\delta\in{\Eb}$ for which, thanks to the boundedness of $a$, $b$ and $c$, we can consider $a\alpha\Phi_{\alpha}$, $b\beta\Phi_{\beta}$ and $c\gamma\Phi_{\gamma}$ independently.
	
We deal then with the situations where $\delta\in{\Eb}^{c}$. In those, because $a$, $b$ and $c$ may be unbounded we must consider $a\alpha\Phi_{\alpha}$, $b\beta\Phi_{\beta}$ and $c\gamma\Phi_{\gamma}$ together. Let $\delta\in{\Eb}^{c}$, then there is $i\in\ensemble{0,1,2}$ such that $\delta\in\intervalleoo{\delta_{i}-\eta}{\delta_{i}+\eta}$ --- recall that $\delta=\delta_{i}$ provides the multiple root $\lambda_{i}$ and that $\eta$ is defined in Lemmas \ref{lem:fact_P_delta_triple-root} to \ref{lem:fact_P_delta_1-double-root}.
	\item[($ii$)] We treat the cases where the root $\lambda_{i}$ is double.	\item[($iii$)] We treat the cases where the root $\lambda_{i}$ is triple.
\end{itemize}
Before starting, we emphasize that \eqref{partie-reelle-neg} insures
$
\realpart{\alpha}, \realpart{\beta}, \realpart{\gamma} \leq 0
$
which allows us to use the estimates \eqref{Control_ERFC} at point $z = \frac{-2 \, \bullet \, \sqrt{d}t+y}{2\sqrt{dt}}$ for $\bullet$ in the convex hull of $\alpha$, $\beta$ and $\gamma$. Notice that, in the sequel, we shall often write $\Phi_\bullet$ for $\Phi _\bullet (t,\xi,y)$.

\medskip
Step ($i$). Because $\inf\limits_{\delta\in{\Eb}} \Parentheses{\verti{\alpha-\beta} , \verti{\alpha-\gamma} , \verti{\beta-\gamma}} > \varepsilon$, we have
\begin{equation}\label{a_b_c_bounded}
\sup\limits_{\delta\in{\Eb}} \Parentheses{\verti{a} , \verti{b} , \verti{c}} < \frac{1}{\varepsilon^{2}}.
\end{equation}
Next, using  \eqref{Control_ERFC}, we have, for $\lambda\in\ensemble{\alpha,\beta,\gamma}$, 
$$
 	\verti{\lambda\Phi_{\lambda}}
 	= \verti{\lambda \frac{\Erfc}{\Gamma}\Parentheses{\frac{-2\lambda\sqrt{d}t+y}{2\sqrt{dt}}}}
 	\lesssim \frac{2\verti{\lambda}\sqrt{dt}}{\verti{-2\lambda\sqrt{d}t+y}} 	\leq \frac{1}{\sqrt{t}},
$$
the last inequality coming from $\realpart{\lambda} \leq 0$ and $y\geq 0$. 
Gathering the latter inequality with \eqref{a_b_c_bounded} provides, in view of \eqref{recall-Phi},
\begin{equation}\label{Controle_preuve_du_lemme_1}
\sup\limits_{y\geq 0}
\;\;
\sup\limits_{\delta\in{\Eb}}
\;\;
\verti{\Phi\parentheses{t,\xi,y}}\lesssim\frac{1}{\sqrt{1+t}} \quad  \text{ for all } t\geq 1.
\end{equation}
We now turn to the case where $0<t\leq 1$ (still when  $\delta\in{\Eb}$). Let $\delta_{\infty}>0$ be  defined as in Lemma \ref{lem:fact_P_delta_large_values_of_delta}. It is readily seen that  
\begin{equation}\label{delta-compact}
\sup\limits_{t\in\intervalleof{0}{1}}
\;\;
\sup\limits_{y\geq 0}
\;\;
\sup\limits_{\delta\in{\Eb}\cap\intervalleff{0}{\delta_{\infty}}}
\;\;
\verti{\Phi\parentheses{t,\xi,y}}<+\infty.
\end{equation}
Next let us consider $\delta\geq\delta_{\infty}$. Since $\gamma=\overline \beta$, $c=\overline b$, $\Phi_\gamma=\overline{\Phi_\beta}$, we deduce from \eqref{recall-Phi} that
\begin{equation}\label{Controle_du_Phi_t_leq_1}
\verti{\Phi}\leq
\verti{a\alpha\Phi_{\alpha}} + 2 \verti{b\beta\Phi_{\beta}}.
\end{equation}
The first term in the above right-hand-side is clearly bounded while, for the second,
$$
\verti{b\beta\Phi_{\beta}} = \frac{\verti{\beta}\verti{\Phi_{\beta}}}{\verti{\beta-\alpha}\verti{\beta-\gamma}} \leq \frac{\verti{\Phi_{\beta}}}{\verti{\beta-\alpha}} \leq \frac{\verti{\Phi_{\beta}}}{\varepsilon} <+\infty.
$$
Hence
\begin{equation}\label{delta-gd}
\sup\limits_{t\in\intervalleof{0}{1}}
\;\;
\sup\limits_{y\geq 0}
\;\;
\sup\limits_{\delta\in{\Eb}\cap[\delta_{\infty},+\infty)}
\;\;
\verti{\Phi\parentheses{t,\xi,y}}<+\infty.
\end{equation}
From \eqref{delta-compact} and \eqref{delta-gd}, we deduce that \eqref{Controle_preuve_du_lemme_1} actually holds true for all $t > 0$,  and we are done with this case.

Step ($ii$). Without loss of generality, we may, to fix ideas, suppose that $\delta\to\delta_{i}$ ($i=1\text{ or }2$) provides the merging of the simple roots $\alpha$ and $\beta$ into the double root $\lambda_{i}$ --- see Lemmas \ref{lem:fact_P_delta_2-double-roots} and \ref{lem:fact_P_delta_1-double-root}. Recall in that case that
$$
\inf\limits_{\delta\in\intervalleoo{\delta_{i}-\eta}{\delta_{i}+\eta}}
\;\;
\inf\limits_{\lambda\in\intervalleff{\alpha}{\beta}}
\;\;
\parentheses{\verti{\lambda},\verti{\lambda-\gamma}} > \varepsilon,
$$
for some $\eta>0$. Then, for $\delta\in\intervalleoo{\delta_{i}-\eta}{\delta_{i}+\eta}$ and $\lambda\in\intervalleff{\alpha}{\beta}$, one sets
$$
\psi_{\lambda} = \psi_{\lambda}\parentheses{t,\xi,y} \= \frac{\lambda\parentheses{\Phi_{\lambda}-\Phi_{\gamma}}}{\lambda-\gamma},
$$
and we claim that
\begin{equation}\label{Controle_psi_prime}
\sup\limits_{y\geq 0}
\;\;
\sup\limits_{\delta\in\intervalleoo{\delta_{i}-\eta}{\delta_{i}+\eta}} 
\;\;
\sup\limits_{\lambda \in \intervalleff{\alpha}{\beta}} 
\;\;
\verti{\psi_{\lambda}'}\lesssim\frac{1}{\sqrt{1+t}},
\qquad \forall t > 0,
\end{equation}
where $\psi'_{\lambda}$ denotes $\parentheses{\partial_{\bullet}\psi_{\bullet}}_{\lambda}\parentheses{t,\xi,y}$. Using $a\alpha+b\beta+c\gamma=0$ we get
\begin{align*}
 	\verti{\Phi}
	&= \verti{a\alpha\parentheses{\Phi_{\alpha}-\Phi_{\gamma}}+b\beta\parentheses{\Phi_{\beta}-\Phi_{\gamma}}}\\
 	&= \frac{\verti{\psi_{\alpha}-\psi_{\beta}}}{\verti{\alpha-\beta}}\\
 	&\leq \; \sup\limits_{y\geq 0}\;\;\sup\limits_{\delta\in\intervalleoo{\delta_{i}-\eta}{\delta_{i}+\eta}} \;\;
\sup\limits_{\lambda \in \intervalleff{\alpha}{\beta}} \;\;\verti{\psi_{\lambda}'}\\
 	&\lesssim \frac{1}{\sqrt{1+t}},
\end{align*}
where we used the mean value inequality to provide third line.

It thus only remains to prove the claim \eqref{Controle_psi_prime}. We have
$$
\psi'_{\lambda} = \frac{\lambda\Phi'_{\lambda}}{\lambda-\gamma}
- \frac{\gamma\Phi_{\lambda}}{\parentheses{\lambda-\gamma}^{2}}
+\frac{\gamma\Phi_{\gamma}}{\parentheses{\lambda-\gamma}^{2}},
$$
where $\Phi'_{\lambda}$ denotes $\parentheses{\partial_{\bullet}\Phi_{\bullet}}_{\lambda}\parentheses{t,\xi,y}$. It is at first clear that
\begin{equation}\label{proof_lemma_small_t_case_2}
\sup\limits_{t \in \intervalleof{0}{1}}
\;\; 
\sup\limits_{y\geq 0}
\;\;
\sup\limits_{\delta\in\intervalleoo{\delta_{i}-\eta}{\delta_{i}+\eta}} 
\;\;
\sup\limits_{\lambda \in \intervalleff{\alpha}{\beta}} 
\;\;
\verti{\psi'_{\lambda}}
< +\infty.
\end{equation}
Next, using control \eqref{Control_ERFC}, we have, for $\lambda\in\intervalleff{\alpha}{\beta}$,
$$
\verti{\Phi'_{\lambda}} =
\sqrt{t}\verti{\Parentheses{\frac{\Erfc}{\Gamma}}'\!\Parentheses{\frac{-2\lambda\sqrt{d}t+y}{2\sqrt{dt}}}
}\lesssim \sqrt{t} \; \frac{4dt}{\verti{-2\lambda\sqrt{d}t+y}^{2}}
\leq \frac{1}{\verti{\lambda}^{2}\sqrt{t}},
$$
$$
 	\verti{\Phi_{\lambda}}
 	= \verti{\frac{\Erfc}{\Gamma}\Parentheses{\frac{-2\lambda\sqrt{d}t+y}{2\sqrt{dt}}}}
 	\lesssim \frac{2\sqrt{dt}}{\verti{-2\lambda\sqrt{d}t+y}}
 	\leq \frac{1}{\verti{\lambda}\sqrt{t}},
$$
$$
 	\verti{\gamma\Phi_{\gamma}}
 	= \verti{\gamma \frac{\Erfc}{\Gamma}\Parentheses{\frac{-2\gamma\sqrt{d}t+y}{2\sqrt{dt}}}}
 	\lesssim \frac{\verti{\gamma}2\sqrt{dt}}{\verti{-2\gamma\sqrt{d}t+y}}
 	\leq \frac{1}{\sqrt{t}},
$$
and so, because $\verti{\lambda}>\varepsilon$ --- see Lemmas \ref{lem:fact_P_delta_2-double-roots} and \ref{lem:fact_P_delta_1-double-root}, ($\ref{AOneDoubleRoots-seg-away-from-zero-and-gamma}$) ---, one gets, for all $t\geq 1$,
\begin{equation}\label{proof_lemma_large_t_case_2}
\sup\limits_{y\geq 0}
\;\; 
\sup\limits_{\delta\in\intervalleoo{\delta_{i}-\eta}{\delta_{i}+\eta}} 
\;\;
\sup\limits_{\lambda \in \intervalleff{\alpha}{\beta}} 
\;\;
\verti{\psi'_{\lambda}}
\lesssim \frac{1}{\sqrt{1+t}}.
\end{equation}
Gathering \eqref{proof_lemma_small_t_case_2} and \eqref{proof_lemma_large_t_case_2} finally gives \eqref{Controle_psi_prime} for all $t > 0$ and we are done with this case.

Step ($iii$). We recall in this last case that $\delta\to\delta_{0}$ provokes the merging of the three simple roots $\alpha$, $\beta$, $\gamma$ into the triple root $\lambda_{0} = -\frac{A\sqrt{d}}{3}$. Without loss of generality it may be assumed that $\alpha$ is the real root, and taking $\epsR$ and $\epsI$ as defined in Lemma \ref{lem:fact_P_delta_triple-root}, ($\ref{ATripleRoots-epsR-espI}$), we have
$$
\alpha = \lambda_{0} - 2\epsR,
\qquad
\beta = \lambda_{0} + \epsR + i\epsI,
\qquad
\gamma = \lambda_{0} + \epsR - i\epsI.
$$
Using again $a\alpha+b\beta+c\gamma=0$ yields
\begin{align}
 	\Phi
	&= b\beta\parentheses{\Phi_{\beta}-\Phi_{\alpha}} + c\gamma\parentheses{\Phi_{\gamma}-\Phi_{\alpha}} \nonumber\\
 	&= \frac{\beta}{\beta-\gamma}\frac{\Phi_{\beta}-\Phi_{\alpha}}{\beta-\alpha} - \frac{\gamma}{\beta-\gamma}\frac{\Phi_{\gamma}-\Phi_{\alpha}}{\gamma-\alpha} \nonumber\\
 	&= \frac{\beta}{\beta-\gamma}\frac{\Phi_{\beta}-\Phi_{\alpha}}{\beta-\alpha} - \frac{\gamma}{\beta-\gamma}\overline{\Parentheses{\frac{\Phi_{\beta}-\Phi_{\alpha}}{\beta-\alpha}}} \nonumber\\
 	&= \Realpart{\frac{\Phi_{\beta}-\Phi_{\alpha}}{\beta-\alpha}} + i\frac{\beta+\gamma}{\beta-\gamma}\Impart{\frac{\Phi_{\beta}-\Phi_{\alpha}}{\beta-\alpha}} \nonumber\\
 	&= \label{Control_Phi_case_2}\Realpart{\frac{\Phi_{\beta}-\Phi_{\alpha}}{\beta-\alpha}} + \frac{\lambda_{0}+\epsR}{\epsI}\Impart{\frac{\Phi_{\beta}-\Phi_{\alpha}}{\beta-\alpha}}.
\end{align}
The main issue is now to control second term in \eqref{Control_Phi_case_2} because one has to compensate the term $\epsI$ which vanishes as $\delta\to \delta_{0}$. To do so, we use a Taylor-Lagrange expansion of $\Phi_{\bullet}$ at $\bullet=\alpha$ which provides, for some $\lambda\in\intervalleff{\alpha}{\beta}$,
$$
\Phi_{\beta}=\Phi_{\alpha}+\parentheses{\beta-\alpha}\Phi'_{\alpha}+\frac{\parentheses{\beta-\alpha}^{2}}{2}\Phi''_{\lambda},
$$
where $\Phi'_{\alpha}$ denotes $\parentheses{\partial_{\bullet}\Phi_{\bullet}}_{\alpha}$, and $\Phi''_{\lambda}$ denotes $\parentheses{\partial_{\bullet\bullet}\Phi_{\bullet}}_{\lambda}$. Hence, one gets
$$
\frac{\Phi_{\beta}-\Phi_{\alpha}}{\beta-\alpha} =
\Phi'_{\alpha} + \frac{3\epsR + i \epsI}{2}\Phi''_{\lambda}.
$$
Now, because $\alpha$ is real,
$$
\Impart{\frac{\Phi_{\beta}-\Phi_{\alpha}}{\beta-\alpha}} =
\frac{3\epsR}{2}\impart{\Phi''_{\lambda}} +
\frac{\epsI}{2}\realpart{\Phi''_{\lambda}},
$$
and therefore, using that $\epsR\stackrel{\delta\to\delta_{0}}{=}\mathcal{O}\parentheses{\epsI}$,
$$
\verti{\Impart{\frac{\Phi_{\beta}-\Phi_{\alpha}}{\beta-\alpha}}} \leq 
\Parentheses{\frac{3}{2}\verti{\epsR}+\frac{1}{2}\verti{\epsI}}\verti{\Phi''_{\lambda}} \lesssim
\verti{\epsI \Phi''_{\lambda}}.
$$
As a consequence, by using the mean value inequality to control the first term in \eqref{Control_Phi_case_2}, one gets
$$
\verti{\Phi} \lesssim \; \sup\limits_{y\geq 0}\;\;\sup\limits_{\delta\in\intervalleoo{\delta_{0}-\eta}{\delta_{0}+\eta}} \;\;
\sup\limits_{\ell \in \intervalleff{\alpha}{\beta}} \;\; \verti{\Phi'_{\ell}} \;\; + \;\; \sup\limits_{y\geq 0}\;\;\sup\limits_{\delta\in\intervalleoo{\delta_{0}-\eta}{\delta_{0}+\eta}} \;\;
\sup\limits_{\lambda \in \intervalleff{\alpha}{\beta}} \;\; \verti{\lambda_{0}+\epsR}\verti{\Phi''_{\lambda}}.
$$
It is first clear that
\begin{equation}\label{proof_lemma_small_t_case_3}
\sup\limits_{t \in \intervalleof{0}{1}}
\;\; 
\sup\limits_{y\geq 0}
\;\;
\sup\limits_{\delta\in\intervalleoo{\delta_{0}-\eta}{\delta_{0}+\eta}} 
\;\;
\sup\limits_{\ell \in \intervalleff{\alpha}{\beta}} 
\;\;
\verti{\Phi'_{\ell}}
< +\infty,
\end{equation}
and
\begin{equation}\label{proof_lemma_small_t_case_3_bis}
\sup\limits_{t \in \intervalleof{0}{1}}
\;\; 
\sup\limits_{y\geq 0}
\;\;
\sup\limits_{\delta\in\intervalleoo{\delta_{0}-\eta}{\delta_{0}+\eta}} 
\;\;
\sup\limits_{\lambda \in \intervalleff{\alpha}{\beta}} 
\;\;
\verti{\Phi''_{\lambda}}
< +\infty.
\end{equation}
Now, using control \eqref{Control_ERFC}, we have, for $\ell \text{ and }\lambda\in\intervalleff{\alpha}{\beta}$,
$$
\verti{\Phi'_{\ell}} =
\sqrt{t}\verti{\Parentheses{\frac{\Erfc}{\Gamma}}'\!\Parentheses{\frac{-2\ell\sqrt{d}t+y}{2\sqrt{dt}}}
}\lesssim \sqrt{t} \; \frac{4dt}{\verti{-2\ell\sqrt{d}t+y}^{2}}
\leq \frac{1}{\verti{\ell}^{2}\!\sqrt{t}},
$$
$$
\verti{\Phi''_{\lambda}} =
t\verti{\Parentheses{\frac{\Erfc}{\Gamma}}''\!\Parentheses{\frac{-2\lambda\sqrt{d}t+y}{2\sqrt{dt}}}
}\lesssim t \; \frac{8\parentheses{dt}^{\frac{3}{2}}}{\verti{-2\lambda\sqrt{d}t+y}^{3}}
\leq \frac{1}{\verti{\lambda}^{3}\!\sqrt{t}},
$$
and thus, because $\verti{\ell} \text{ and }\verti{\lambda}>\varepsilon$ --- see Lemma \ref{lem:fact_P_delta_triple-root}, ($\ref{ATripleRoots-seg_away_from_zero}$) ---, one gets, for all $t\geq 1$,
\begin{equation}\label{proof_lemma_large_t_case_3}
\sup\limits_{y\geq 0}
\;\; 
\sup\limits_{\delta\in\intervalleoo{\delta_{0}-\eta}{\delta_{0}+\eta}} 
\;\;
\sup\limits_{\ell \in \intervalleff{\alpha}{\beta}} 
\;\;
\verti{\Phi'_{\ell}}
\lesssim \frac{1}{\sqrt{1+t}},
\end{equation}
\begin{equation}\label{proof_lemma_large_t_case_3_bis}
\sup\limits_{y\geq 0}
\;\; 
\sup\limits_{\delta\in\intervalleoo{\delta_{0}-\eta}{\delta_{0}+\eta}} 
\;\;
\sup\limits_{\lambda \in \intervalleff{\alpha}{\beta}} 
\;\;
\verti{\Phi''_{\lambda}}
\lesssim \frac{1}{\sqrt{1+t}}.
\end{equation}
Gathering \eqref{proof_lemma_small_t_case_3}, \eqref{proof_lemma_small_t_case_3_bis}, \eqref{proof_lemma_large_t_case_3} and \eqref{proof_lemma_large_t_case_3_bis} finally provides $\verti{\Phi}\lesssim \frac{1}{\sqrt{1+t}}$ for all $t > 0$, and we are done with this case which concludes the proof of the lemma.
\end{proof}

\vspace*{\fill}

%%%%%%%%%%%%%%%%%%%%%%%%%%%%%%%%%%%%%%%%%%%%%%%%%%%%%%%%%%%%%%%%%%%%%%%%%%
%%%%%%%%%%%%%%%%%%%%%%%%%%%%%%%%%%%%%%%%%%%%%%%%%%%%%%%%%%%%%%%%%%%%%%%%%%
%%%%%%%%%%%%%%%%%%%%%%%%%%%%%%%%%%%%%%%%%%%%%%%%%%%%%%%%%%%%%%%%%%%%%%%%%%

\section{Numerical explorations}\label{s:num}

Through this section, we assume $N=2$, meaning that the road is a line and we perform some numerical simulations of the Cauchy problem \eqref{syst}\,--\,\eqref{initial_data-bis}.  To do so, we truncate the unbounded domain $\R\times (0,+\infty)$ and work on a  bounded box $(-2M,2M)\times (0,M)$, where $M>0$. To preserve the quantity of individuals, we impose the no-flux boundary conditions through the artificial frontiers. The resolution method is  based on a classical finite difference scheme. Since we consider $M\gg 1$ 
and an initial datum $\parentheses{v_{0},u_{0}}$ supported in a \lq\lq relatively small'' compact compared to the size of the box, we are confident that the numerical solution should be close to the real solution. 

\medskip

\noindent {\bf On the decay rate.} When $D>d$, the asymptotic decay rate of the $L^\infty$ norm of the solution $(v,u)$ is expected to be of the magnitude $\mathcal{O}((1+t)^{-1})$, see Theorem \ref{th:decay} and the discussion on the logarithmic term right after. This is numerically confirmed by the left panel of Figure \ref{figure-Decay_Rate}.

On the other hand, when $D\leq d$ (which is not the essence of the field-road model), one should, based on Remark \ref{rem:D-egal-d},  redo the lengthy computations of Section \ref{s:decay} to capture the asympotic decay rate. We are rather confident that this should not alter the result, and  this is sustained by numerical simulations, see the right panel of Figure \ref{figure-Decay_Rate}.

\refstepcounter{FIGURE}\label{figure-Decay_Rate}
\begin{center}
\includegraphics[scale=1]{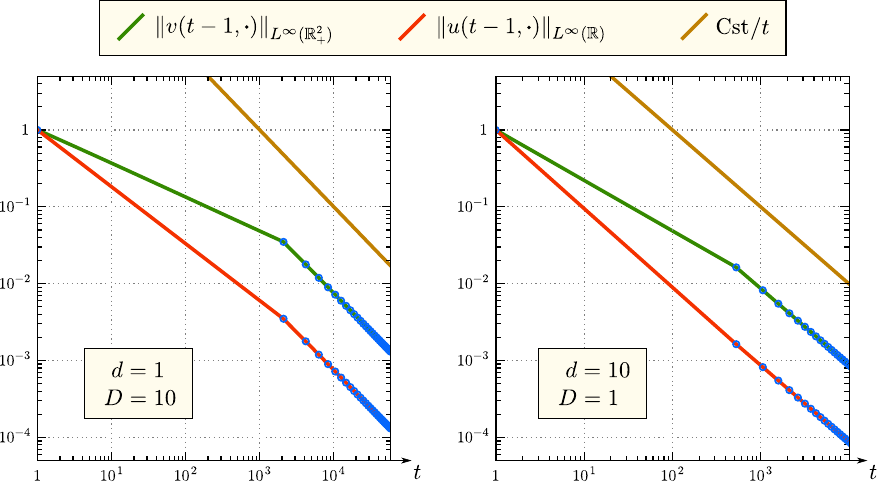}\\
\textsl{\textbf{Figure \theFIGURE} --- Decay in $\log$-scale of the solution to the Cauchy problem \eqref{syst}\?\?\?--\,\eqref{initial_data-bis} with $N=2$, $M=800$, $\mu=10$ and $\nu=1$, arising from the datum $\parentheses{v_{0},u_{0}}\equiv \parentheses{\indicatrice{\intervalleff{-10}{10}\times\intervalleff{10}{30}},\indicatrice{\intervalleff{-10}{10}}}$.}
\end{center}

\medskip

\noindent {\bf On the level sets.} We now turn to numerically explore the form of the level sets of the solution $(v,u)$. The results are presented in Figure \ref{figure-Solution2}.

In particular on the bottom panel of Figure \ref{figure-Solution2}, corresponding to a situation where $D=100$ and $d=1$, we observe the following. There holds
$$
-\partial _y v|_{y=0}(t=1000,\point)<0 \quad  \text{ in the \lq\lq middle of the road''},
$$
meaning that the individuals mainly switch {\it from} the field {\it to} the road. On the other hand,  there holds
$$
-\partial_y v|_{y=0}(t=1000,\point)>0 \quad  \text{ \lq\lq far away'' on the road},
$$
meaning that the individuals mainly switch {\it from} the road {\it to} the field. Roughly speaking, the road sucks up individuals in the central region (corresponding to the bulk of the population) and spits them out in the far away region (corresponding to the tails of the population). 

\MAT{
On the other hand, on the top panel of Figure \ref{figure-Solution2}, corresponding to a situation where $D=0.1$ and $d=1$, there holds $ -\partial _y v|_{y=0}(t=1000,\point)\approx 0$ (since the level sets of $v$ are almost perpendicular to the road), meaning that there are very few exchanges between the field and the road. This can be observed with further accuracy in Figure \ref{figure-plot-flux-D01}.
}

\refstepcounter{FIGURE}\label{figure-Solution2}
\begin{center}
\includegraphics[scale=1]{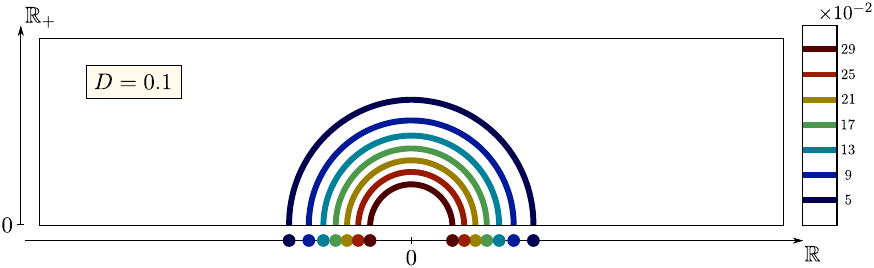}\\
\includegraphics[scale=1]{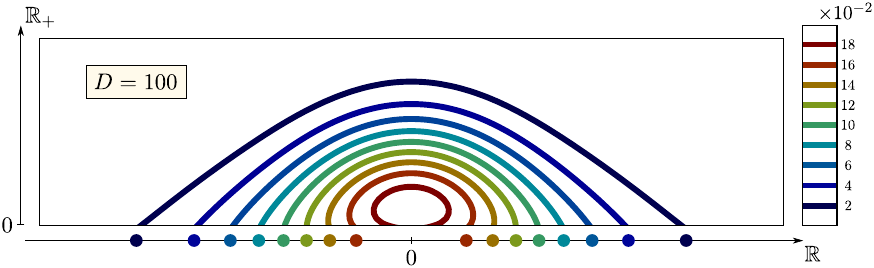}\\
\textsl{\textbf{Figure \theFIGURE} --- Two snapshots at time $t=1000$ of a few level sets of the solution to the Cauchy problem \eqref{syst}\?\?\?--\,\eqref{initial_data-bis} with $N=2$, $M=200$, $d=\mu=\nu=1$ and two different values for $D$, arising from the datum $\parentheses{v_{0},u_{0}}\equiv \parentheses{\indicatrice{\intervalleff{-5}{5}\times\intervalleff{0}{5}},0}$.}
\end{center}

\medskip

\noindent {\bf On the flux.} In view of the above considerations, we are now interested in
$$
F(t,x):= \mu u(t,x) - \nu v|_{y=0}(t,x)=-d\partial _y v|_{y=0}(t,x)
$$
which is the {\it flux entering the field}. 

When $D=100$ and $d=1$, Figure \ref{figure-plot-flux-D100} confirms that the flux is negative in the middle of the road, and positive far away. Furthermore if we denote $x_0(t)$ the (rightmost) position where the flux changes sign, that is $F(t,x_0(t))=0$, the left panel of Figure \ref{figure-plot-flux-x0-max-100} suggest that $x_0(t)$ asymptotically behaves like $\mathcal O(t^{1/2})$. Last, the right panel of Figure \ref{figure-plot-flux-x0-max-100} indicates that the asymptotic decay rate of the flux at $x=0$ is of the magnitude $\mathcal O((1+t)^{-3/2})$.

\refstepcounter{FIGURE}\label{figure-plot-flux-D100}
\begin{center}
\includegraphics[scale=1]{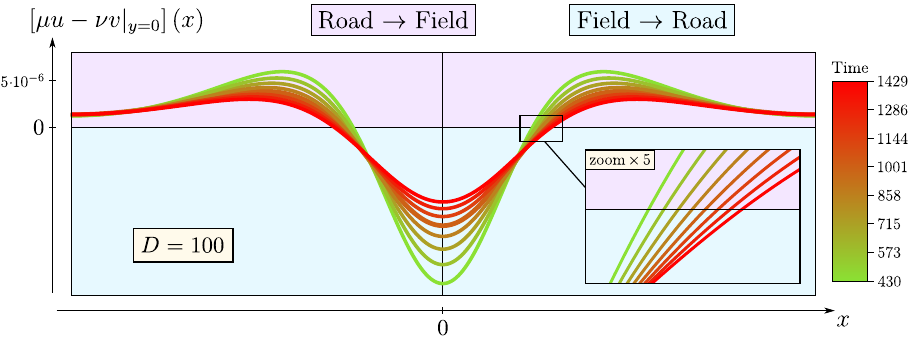}\\
\textsl{\textbf{Figure \theFIGURE} --- Plot of the flux entering the field, $F\parentheses{t,x} = \mu u(t,x) - \nu v|_{y=0}(t,x)$, of the solution to the Cauchy problem \eqref{syst}\?\?\?--\,\eqref{initial_data-bis} with $N=2$, $M=200$, $d=\mu=\nu=1$ and $D=100$, arising from the datum $\parentheses{v_{0},u_{0}}\equiv \parentheses{\indicatrice{\intervalleff{-5}{5}\times\intervalleff{0}{5}},0}$, for eight different values of time.}
\end{center}

\refstepcounter{FIGURE}\label{figure-plot-flux-x0-max-100}
\begin{center}
\includegraphics[scale=1]{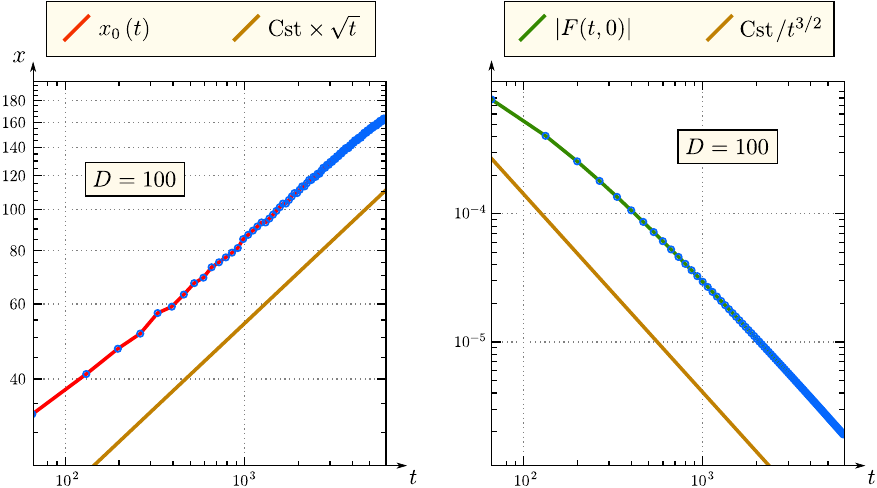}\\
\textsl{\textbf{Figure \theFIGURE} --- Plots in $\log$-scale of $x_{0}\parentheses{t}$ (on left) and
$\verti{F\parentheses{t,0}}$
(on right) for the solution to the Cauchy problem \eqref{syst}\?\?\?--\,\eqref{initial_data-bis} with $N=2$, $M=200$, $d=\mu=\nu=1$ and $D=100$, arising from the datum $\parentheses{v_{0},u_{0}}\equiv \parentheses{\indicatrice{\intervalleff{-5}{5}\times\intervalleff{0}{5}},0}$.}
\end{center}

When $D=0.1$, $d=1$, Figure \ref{figure-plot-flux-D01} indicates that the flux is positive in the middle of the road, and negative far away, that is the opposite situation than the previous case.  Furthermore if we denote $x_0(t)$ the (rightmost) position where the flux changes sign, that is $F(t,x_0(t))=0$, the left panel of Figure \ref{figure-plot-flux-x0-max-01} suggest that $x_0(t)$, again, asymptotically behaves like $\mathcal O(t^{1/2})$. Last, the right panel of Figure \ref{figure-plot-flux-x0-max-01} indicates that the asymptotic decay rate of the flux at $x=0$ is of the magnitude $\mathcal O((1+t)^{-2})$, which is in contrast with the previous case.

\refstepcounter{FIGURE}\label{figure-plot-flux-D01}
\begin{center}
\includegraphics[scale=1]{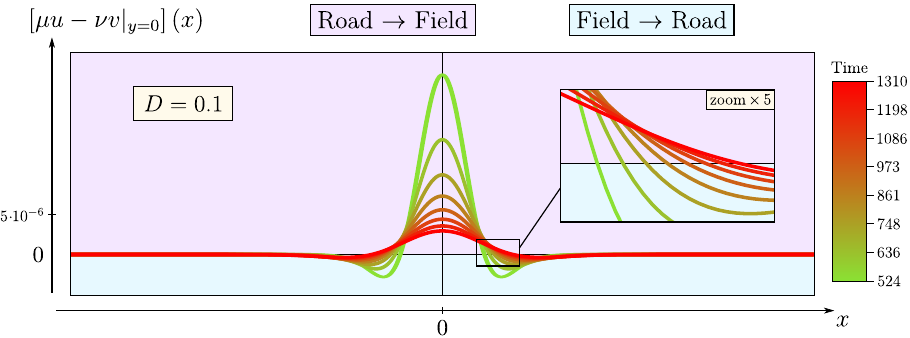}\\
\textsl{\textbf{Figure \theFIGURE} --- Plot of the flux entering the field, $F\parentheses{t,x} = \mu u(t,x) - \nu v|_{y=0}(t,x)$, of the solution to the Cauchy problem \eqref{syst}\?\?\?--\,\eqref{initial_data-bis} with $N=2$, $M=200$, $d=\mu=\nu=1$ and $D=0.1$, arising from the datum $\parentheses{v_{0},u_{0}}\equiv \parentheses{\indicatrice{\intervalleff{-5}{5}\times\intervalleff{0}{5}},0}$, for eight different values of time.}
\end{center}

\refstepcounter{FIGURE}\label{figure-plot-flux-x0-max-01}
\begin{center}
\includegraphics[scale=1]{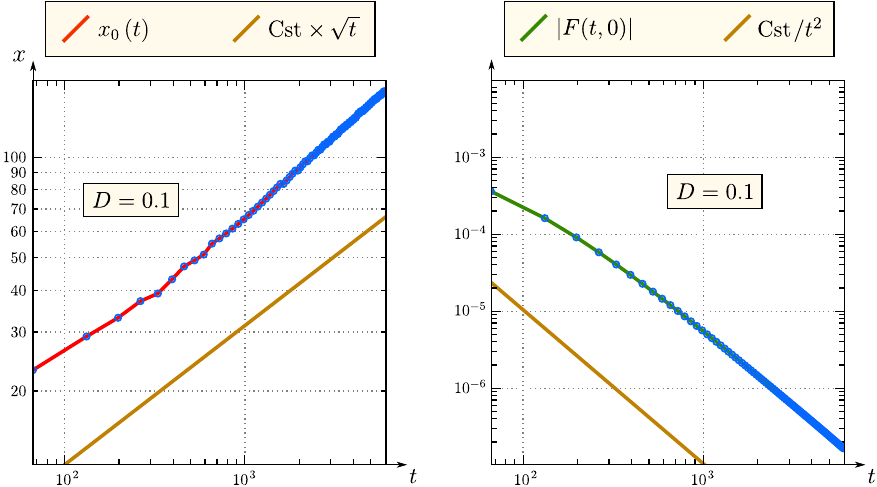}\\
\textsl{\textbf{Figure \theFIGURE} --- Plots in $\log$-scale of $x_{0}\parentheses{t}$ (on left) and
$\verti{F\parentheses{t,0}}$
(on right) for the solution to the Cauchy problem \eqref{syst}\?\?\?--\,\eqref{initial_data-bis} with $N=2$, $M=200$, $d=\mu=\nu=1$ and $D=0.1$, arising from the datum $\parentheses{v_{0},u_{0}}\equiv \parentheses{\indicatrice{\intervalleff{-5}{5}\times\intervalleff{0}{5}},0}$.}
\end{center}

\medskip

\noindent{\bf On the threshold value for the shape of the flux.} \MAT{From the above, the shape of the flux function $F$ is very dependent on the value of $D$ ($d=1$ being fixed). As we may see in
Figure \ref{figure-plot-signe-flux-fonction-de-D},
it seems there is a threshold value at $D=2$. This could be related to the enhanced spreading speed for the Fisher-KPP system \eqref{syst-non-lin}, noticed in the original paper
\cite{Ber-Roq-Ros-13-1}, when $D>2d$.
It would be interesting to have a result that highlights some connections between the purely diffusive system
\eqref{syst}
from one side, and the nonlinear system \eqref{syst-non-lin} from the other side, especially to identify more clearly the reason for this threshold.
}

\refstepcounter{FIGURE}\label{figure-plot-signe-flux-fonction-de-D}
\begin{center}
\includegraphics[scale=1]{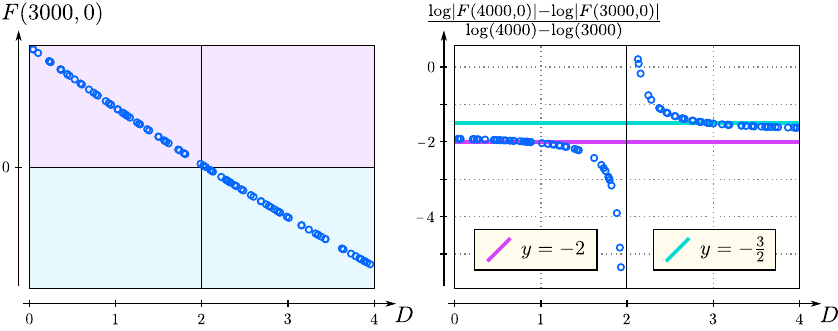}\\
\textsl{\textbf{Figure \theFIGURE} --- Plot of $F\parentheses{3000,0}$ (on left) and the decay rate of $\verti{F\parentheses{t,0}}$ (on right) with different values of $D$ for the solution to the Cauchy problem ${\text{\eqref{syst}\?\?\?--\,\eqref{initial_data-bis}}}$ with $N=2$, $M=300$, $d=\mu=\nu=1$, arising from the datum $\parentheses{v_{0},u_{0}}\equiv \parentheses{\indicatrice{\intervalleff{-20}{20}\times\intervalleff{0}{20}},0}$.}
\end{center}

Let us make a concluding remark on the asymptotic decay rate of the flux. Notice that we have focused on the flux at $x=0$. However, the $L^\infty$ norm of the flux is not always attained at this point. In particular, we have numerically observed that, in the vicinity of $x=0$, the flux is \lq\lq dromedary shaped'' (one bump) when $D$ is much smaller than $2$, but \lq\lq camel shaped'' (two bumps) when $D$ is slightly smaller than $2$. This oscillation phenomenon would deserve further investigations. 

%%%%%%%%%%%%%%%%%%%%%%%%%%%%%%%%%%%%%%%%%%%%%%%%%%%%%%%%%%%%%%%%%%%%%%%%%%
%%%%%%%%%%%%%%%%%%%%%%%%%%%%%%%%%%%%%%%%%%%%%%%%%%%%%%%%%%%%%%%%%%%%%%%%%%
%%%%%%%%%%%%%%%%%%%%%%%%%%%%%%%%%%%%%%%%%%%%%%%%%%%%%%%%%%%%%%%%%%%%%%%%%%

\appendix

\section{Appendix}

\subsection{The complementary error function $\Erfc\!$}\label{ss:appendix_ERFC}

Recalling that $\Gamma\parentheses{\ell}=e^{-\ell \, ^{2}}$, the $\Erfc\!$ function is defined by
\begin{equation}\label{def:Erfc}
\Erfc(x)\=
\frac{2}{\sqrt \pi}\int_{x}^{+\infty}\Gamma\parentheses{y} \; dy,
\qquad x\in\mathbb{R},
\end{equation}
and has an holomorphic continuation, namely
\begin{equation}\label{def:Erfc_complex}
\Erfc(z)\=
1-
\frac{2}{\sqrt{\pi}}
\sum\limits_{k=0}^{+\infty}\frac{\parentheses{-1}^{k}}{\parentheses{2k+1}k!}z^{2k+1},
\qquad z\in\mathbb{C}.
\end{equation}

For any small $\delta >0$, we have the asymptotic expansion, see \cite[page 393]{Hen-book} or \cite[page 262]{Cuy-book},
\begin{equation}\label{DAS:Erfc}
\sqrt{\pi} \, \frac{\Erfc}{\Gamma}\parentheses{z}
=
\frac{1}{z}
-\frac{1}{2z^{3}}
+\frac{3}{4z^{5}}
+o\Parentheses{\frac{1}{\verti{z}^6}},
\qquad
\text{as }
\verti{z}\to +\infty, 
\end{equation}
uniformly in $\{z\in \mathbb C:\verti{\text{arg}\parentheses{z}}\leq \frac{3\pi}{4}-\delta\}$ --- see Figure \ref{figure-ERFC-on-GAMMA}. 
By computing the derivatives of $\frac{\Erfc}{\Gamma}$ and using \eqref{DAS:Erfc} one can check that, for all non-zero $z\in\mathbb{R}_{+}+i\mathbb{R}$,
\begin{equation}\label{Control_ERFC}
\verti{\frac{\Erfc}{\Gamma}\parentheses{z}} \lesssim \frac{1}{\verti{z}},\qquad\qquad
\verti{\frac{d}{dz}\frac{\Erfc}{\Gamma}\parentheses{z}} \lesssim \frac{1}{\verti{z}^{2}},
\qquad\qquad
\verti{\frac{d^{\, 2}}{dz^{2}}\frac{\Erfc}{\Gamma}\parentheses{z}} \lesssim \frac{1}{\verti{z}^{3}}.
\end{equation}

\begin{center}
\begin{tabular}{ccc}
\includegraphics[scale=1]{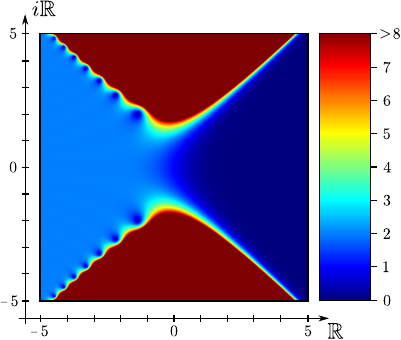}
&\hspace*{3.083mm}&
\includegraphics[scale=1]{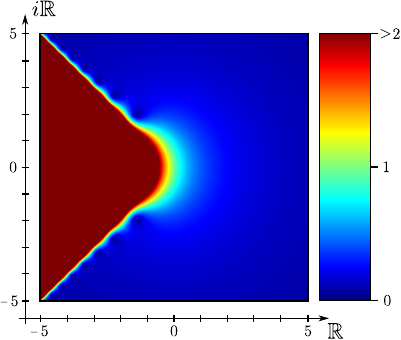}\\[1mm]
\refstepcounter{FIGURE}\label{figure-ERFC}
\textsl{\textbf{Figure \theFIGURE} --- Plot of $\verti{\Erfc\parentheses{z}}$.}
&&
\refstepcounter{FIGURE}\label{figure-ERFC-on-GAMMA}
\textsl{\textbf{Figure \theFIGURE} --- Plot of $\verti{\frac{\Erfc}{\Gamma}\parentheses{z}}$.}
\end{tabular}
\end{center}

\subsection{The polynomials $P_{\delta}$}\label{ss:appendix_P_delta}

Let $\mu , \nu , d>0$ be given. For $\delta \geq 0$, we consider the polynomial 
$$
P_{\delta}\parentheses{\sigma} :=
\sigma^{3} + A\sqrt{d} \sigma^{2} + \parentheses{\mu + \delta} \, \sigma + A\sqrt{d}\delta,
$$
where $A \= \frac{\nu}{d}$. We denote $\alpha\parentheses{\delta}$, $\beta\parentheses{\delta}$ and $\gamma\parentheses{\delta}$ its three complex roots.  Notice that $0$ is a root if and only if $\delta=0$. Recall that $P_\delta$ has multiple roots if and only if the discriminant
$$
\Delta_{\delta}:=
18A^{2}d\parentheses{\mu+\delta}\delta
- 4A^{4}d^{2}\delta
+ A^{2}d\parentheses{\mu+\delta}^{2}
- 4\parentheses{\mu+\delta}^{3}
- 27 A^{2}d\delta^{2}
$$
vanishes. As a consequence, $\alpha$, $\beta$ and $\gamma$ are distinct for almost all $\delta \geq 0$. 

As easily checked, the real roots belong to $(-A\sqrt d,0]$ and, since $\alpha+\beta+\gamma=-A\sqrt d$, we  have
\begin{equation}\label{partie-reelle-neg}
-A\sqrt{d} \; < \; \realpart{\alpha\parentheses{\delta}} , \; \realpart{\beta\parentheses{\delta}} , \; \realpart{\gamma\parentheses{\delta}} \; \leq \; 0,
\qquad
\text{for all }\delta\geq 0.
\end{equation}
Then, a more detailed analysis provides further information about the nature and the behavior of the roots, as stated in the following lemmas.

%LEMME P_{\delta} (large \delta)
\begin{lemma}[Large values of $\delta$]\label{lem:fact_P_delta_large_values_of_delta}
There is  $\delta_{\infty} > 0$ large enough such that, for all $\delta\geq \delta_{\infty}$, the three roots are simple, one is real (say $\alpha$) and the two others are complex conjugated (say $\impart{\beta}>0$ and $\impart{\gamma}<0$). Moreover, we have
$$
\lim\limits_{\delta \rightarrow +\infty} \alpha\parentheses{\delta} = -A\sqrt{d},
\qquad
\lim\limits_{\delta \rightarrow +\infty} \realpart{\beta\parentheses{\delta}} = 0,
\qquad
\lim\limits_{\delta \rightarrow +\infty} \impart{\beta\parentheses{\delta}} = +\infty,
$$
so that, up to increasing $\delta_\infty$ if necessary,  $\verti{\beta-\gamma}>\verti{\beta}$ for all $\delta \geq  \delta_{\infty}$.
\end{lemma}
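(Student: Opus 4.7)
The plan is to combine a discriminant argument (to determine the nature of the roots for large $\delta$) with Vieta's formulas (to extract the asymptotics of each root). The monic cubic $P_\delta$ has one real root and two complex conjugate roots if and only if $\Delta_\delta<0$, three distinct real roots if $\Delta_\delta>0$, and multiple roots if $\Delta_\delta=0$. The expression of $\Delta_\delta$ recalled in the excerpt shows that as $\delta\to+\infty$,
\begin{equation*}
\Delta_\delta = -4\delta^{3}+(18A^{2}d-27A^{2}d)\,\delta^{2}+O(\delta) = -4\delta^{3}-9A^{2}d\,\delta^{2}+O(\delta),
\end{equation*}
so there exists $\delta_\infty>0$ such that $\Delta_\delta<0$ on $[\delta_\infty,+\infty)$. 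Hence, for $\delta\geq\delta_\infty$, the three roots are simple, one is real (denote it $\alpha(\delta)$), and the two others are complex conjugate; label them so that $\operatorname{\mathfrak{Im}}(\beta(\delta))>0$ and $\gamma(\delta)=\overline{\beta(\delta)}$.

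Next I would locate $\alpha(\delta)$. Writing $P_\delta(\sigma)=\sigma^{3}+A\sqrt{d}\,\sigma^{2}+\mu\sigma+\delta(\sigma+A\sqrt{d})$, a direct evaluation gives $P_\delta(-A\sqrt{d})=-\mu A\sqrt{d}$, independent of $\delta$. Since $P_\delta'(-A\sqrt{d})\to+\infty$ as $\delta\to+\infty$ (its dominant term is $\delta$), a quantitative form of the implicit function theorem (or a direct first-order expansion setting $\alpha=-A\sqrt{d}+\varepsilon_\delta$ and solving $P_\delta(\alpha)=0$) yields $\varepsilon_\delta=\mathcal{O}(1/\delta)$, and in particular $\alpha(\delta)\to-A\sqrt{d}$.

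The behavior of $\beta$ and $\gamma$ then follows from Vieta's relations
\begin{equation*}
\alpha+\beta+\gamma=-A\sqrt{d},\qquad \alpha\beta\gamma=-A\sqrt{d}\,\delta.
\end{equation*}
The first relation, combined with $\gamma=\overline{\beta}$ and the asymptotics of $\alpha$, gives $2\operatorname{\mathfrak{Re}}(\beta(\delta))=-A\sqrt{d}-\alpha(\delta)\to 0$. The second relation gives $|\beta(\delta)|^{2}=\beta\gamma=-A\sqrt{d}\,\delta/\alpha(\delta)\sim\delta\to+\infty$. Since $(\operatorname{\mathfrak{Re}}\beta)^{2}$ stays bounded while $|\beta|^{2}\to+\infty$, we must have $\operatorname{\mathfrak{Im}}(\beta(\delta))\to+\infty$.

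Finally, the comparison $|\beta-\gamma|>|\beta|$ for $\delta$ large enough is immediate: indeed $|\beta-\gamma|=2|\operatorname{\mathfrak{Im}}\beta|$ whereas $|\beta|^{2}=(\operatorname{\mathfrak{Re}}\beta)^{2}+(\operatorname{\mathfrak{Im}}\beta)^{2}\sim(\operatorname{\mathfrak{Im}}\beta)^{2}$, so $|\beta-\gamma|/|\beta|\to 2$, and it suffices to enlarge $\delta_\infty$ if necessary. The only nontrivial step is the discriminant computation; everything else is a routine consequence of Vieta. No delicate obstacle is anticipated, since for large $\delta$ the polynomial is a small (order $1/\delta$) perturbation of the rescaled model $\sigma(\sigma^{2}+\delta)+A\sqrt{d}(\sigma^{2}+\delta)$, whose roots are precisely $-A\sqrt{d}$ and $\pm i\sqrt{\delta}$.
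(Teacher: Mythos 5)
The paper states Lemma \ref{lem:fact_P_delta_large_values_of_delta} without providing a written proof, so there is no argument to compare with directly; that said, your approach — discriminant sign for the nature of the roots, implicit function theorem for the real root, then Vieta for the two conjugate roots — is the natural one and the argument is essentially sound.

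One arithmetic slip: the $\delta^{2}$ coefficient of $\Delta_{\delta}$ is not $18A^{2}d-27A^{2}d=-9A^{2}d$. You have dropped the contributions $A^{2}d$ from expanding $A^{2}d(\mu+\delta)^{2}$ and $-12\mu$ from expanding $-4(\mu+\delta)^{3}$; the correct coefficient is $-8A^{2}d-12\mu$. Fortunately this does not affect the conclusion, since the leading behaviour of $\Delta_{\delta}$ is $-4\delta^{3}$ regardless of what the lower-order coefficients are, so $\Delta_{\delta}\to -\infty$ and the discriminant is negative for $\delta$ large — in fact you could cut the expansion at the leading term $-4\delta^{3}$ and drop the $\delta^{2}$ bookkeeping entirely.

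Two smaller points worth making airtight. First, the implicit-function-theorem step tracks a zero near $-A\sqrt{d}$ but one should note that this zero is \emph{the} real root $\alpha(\delta)$: since $P_{\delta}(-A\sqrt{d})=-\mu A\sqrt{d}<0$ and $P_{\delta}(0)=A\sqrt{d}\,\delta>0$, the unique real root for $\delta\geq\delta_{\infty}$ lies in $(-A\sqrt{d},0)$, and the IFT (or the direct estimate $P_{\delta}(-A\sqrt{d}+\varepsilon)>0$ for $\varepsilon>\mu A\sqrt{d}/(A^{2}d+\mu+\delta)$) confines it to a shrinking neighbourhood of $-A\sqrt{d}$. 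Second, the "rescaled model" sentence at the end is a nice heuristic but the perturbation $\mu\sigma$ is not small of order $1/\delta$ in absolute terms at the roots $\pm i\sqrt{\delta}$ (it is of size $\mu\sqrt{\delta}$); what is of order $1/\delta$ is the root displacement, because $P_{\delta}'(\pm i\sqrt{\delta})\sim 2\delta^{3/2}$. Your Vieta argument bypasses this subtlety cleanly, so it is better to keep Vieta as the actual proof and treat the factorisation $(\sigma+A\sqrt{d})(\sigma^{2}+\delta)+\mu\sigma$ only as motivation.
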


Next we fix $\nu>0$ and $d>0$ (and thus $A$) and describe the situations while decreasing the parameter $\mu$ from $+\infty$ to $0$. Most of the results can be seen at a glance on  Figure \ref{figure-Roots}. 

\refstepcounter{FIGURE}\label{figure-Roots}
\begin{center}
\includegraphics[scale=1]{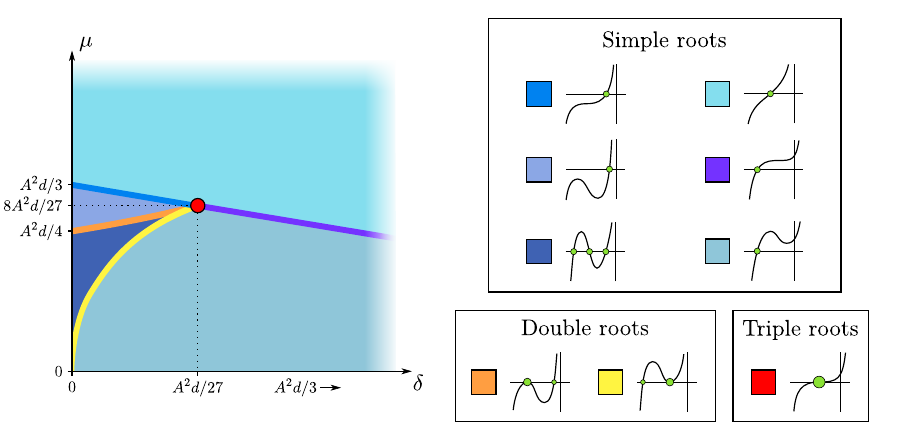}\\\textsl{\textbf{Figure \theFIGURE} --- Multiplicity of the roots of $P_{\delta}$ for fixed $\nu$, $d$. Fix the last parameter $\mu>0$, then $\delta$ browses $[0,+\infty)$.}
\end{center}

%LEMME P_{\delta} (simple roots)
\begin{lemma}[Only simple roots]\label{lem:fact_P_delta_only_simple_roots}
Assume $\mu>\frac{8A^{2}d}{27}$ is fixed and set ${\Eb} \= \mathbb{R}_{+}$.
\begin{itemize}
\refstepcounter{ASimpleRoots}\label{ASimpleRoots-kind-of-roots}
	\item [$(\theASimpleRoots)$] For all $\delta\in{\Eb}$, the three roots are simple, one is real and the two others are complex conjugated.
\refstepcounter{ASimpleRoots}\label{ASimpleRoots-roots-staying-away}
	\item [$(\theASimpleRoots)$] There is $\varepsilon>0$ such that $\inf\limits_{\delta\in{\Eb}} \Parentheses{\verti{\alpha-\beta} , \verti{\alpha-\gamma} , \verti{\beta-\gamma}} > \varepsilon$.
\end{itemize}
\end{lemma}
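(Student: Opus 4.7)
My plan is to characterize exactly which $(\mu,\delta)$ produce a multiple root of $P_\delta$, and then read off both statements from the resulting picture. Since $P_\delta$ has real coefficients and degree $3$, any multiple root $\sigma^{*}$ must be real. The system $P_\delta(\sigma^{*})=0=P'_\delta(\sigma^{*})$ then allows one to solve the second equation for $\mu+\delta$ in terms of $\sigma^{*}$ and substitute back into the first, eliminating $\delta$ to produce the parametrization
\[
\mu=\Psi(\sigma^{*}):=-\frac{2\sigma^{*}(\sigma^{*}+A\sqrt{d})^{2}}{A\sqrt{d}}, \qquad \delta=\sigma^{*2}\left(1+\frac{2\sigma^{*}}{A\sqrt{d}}\right)
\]
of the multiple-root locus in the $(\mu,\delta)$-plane. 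The constraints $\mu>0$ and $\delta\geq 0$ confine $\sigma^{*}$ to $[-A\sqrt{d}/2,0)$, and a short calculus study of $\Psi$ on this interval reveals that it achieves its maximum at the interior critical point $\sigma^{*}=-A\sqrt{d}/3$, with value $\frac{8A^{2}d}{27}$. Consequently, whenever $\mu>\frac{8A^{2}d}{27}$, no admissible $\sigma^{*}$ exists and $P_\delta$ has only simple roots for every $\delta\in\mathcal{E}:=\mathbb{R}_{+}$.

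To finish (i), I will invoke the classical sign rule for cubic discriminants: $\Delta_\delta<0$ is equivalent to ``one real plus two complex conjugate'' roots. At $\delta=0$, $P_0(\sigma)=\sigma(\sigma^{2}+A\sqrt{d}\sigma+\mu)$ has discriminant $\Delta_0=\mu^{2}(A^{2}d-4\mu)$, which is strictly negative because $\mu>\frac{8A^{2}d}{27}>\frac{A^{2}d}{4}$. As the previous step also proved that $\Delta_\delta$ never vanishes on $\mathbb{R}_{+}$, continuity of $\Delta_\delta$ in $\delta$ forces $\Delta_\delta<0$ throughout, which closes (i).

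For (ii), I combine continuity of roots with the asymptotics of Lemma \ref{lem:fact_P_delta_large_values_of_delta}. Labelling $\alpha(\delta)\in\mathbb{R}$ and $\beta(\delta)=\overline{\gamma(\delta)}$ with $\Im\beta>0$ gives continuous maps of $\delta$ by (i). By Lemma \ref{lem:fact_P_delta_large_values_of_delta}, $\alpha(\delta)\to -A\sqrt{d}$ and $|\Im\beta(\delta)|\to+\infty$ as $\delta\to+\infty$, so the three separations $|\alpha-\beta|$, $|\alpha-\gamma|$ and $|\beta-\gamma|=2|\Im\beta|$ all diverge; hence there exists $M>0$ beyond which each of them exceeds $1$. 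On the compact $[0,M]$ the positive continuous function $\delta\mapsto\min(|\alpha-\beta|,|\alpha-\gamma|,|\beta-\gamma|)$ attains a positive minimum $\varepsilon_{0}$, and taking $\varepsilon:=\min(\varepsilon_{0},1)$ concludes. The main obstacle is the parametrization of the multiple-root locus and the calculus identification of the threshold $\frac{8A^{2}d}{27}$ as $\max\Psi$ on $[-A\sqrt{d}/2,0)$; once that is in hand, the rest is a routine continuity--compactness argument.
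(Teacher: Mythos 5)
The paper does not actually prove this lemma --- it is stated without proof in Appendix~\ref{ss:appendix_P_delta}, as are the companion Lemmas~\ref{lem:fact_P_delta_large_values_of_delta}--\ref{lem:fact_P_delta_1-double-root} --- so there is no ``paper proof'' to compare against; your job here is essentially to verify the claim from scratch, which you do correctly.

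Your argument is sound and complete. The parametrization of the multiple-root locus is right: eliminating $\delta$ from $P_\delta(\sigma^*)=P'_\delta(\sigma^*)=0$ does give
\[
\delta \;=\; \sigma^{*2}\!\left(1+\tfrac{2\sigma^{*}}{A\sqrt{d}}\right),
\qquad
\mu \;=\; -\frac{2\sigma^{*}(\sigma^{*}+A\sqrt{d})^{2}}{A\sqrt{d}},
\]
the admissibility constraints $\mu>0$, $\delta\ge0$ confine $\sigma^*$ to $[-A\sqrt{d}/2,\,0)$, and $\Psi'(\sigma^*)=-\tfrac{2}{A\sqrt d}(\sigma^*+A\sqrt d)(3\sigma^*+A\sqrt d)$ shows the unique interior critical point $\sigma^*=-A\sqrt d/3$ is a maximum with $\Psi(-A\sqrt d/3)=\tfrac{8A^2d}{27}$ (and the endpoint value $\Psi(-A\sqrt d/2)=\tfrac{A^2d}{4}<\tfrac{8A^2d}{27}$). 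So $\mu>\tfrac{8A^2d}{27}$ indeed rules out any $\delta\ge0$ with a multiple root. The appeal to the cubic discriminant sign rule, together with $\Delta_0=\mu^2(A^2d-4\mu)<0$ (valid since $\tfrac{8}{27}>\tfrac14$) and $\Delta_\delta\neq0$ on $\R_+$, correctly gives the ``one real, two complex conjugate'' configuration for all $\delta\ge0$. Part (ii) via continuity of the (unambiguously labeled) simple roots, the divergence of the separations from Lemma~\ref{lem:fact_P_delta_large_values_of_delta}, and compactness of $[0,M]$ is also correct. One cosmetic quibble: the lemma asks for a \emph{strict} lower bound $\inf(\cdot)>\varepsilon$, whereas your choice $\varepsilon:=\min(\varepsilon_0,1)$ only guarantees $\inf(\cdot)\ge\varepsilon$; just halve the constant and you are done.
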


%LEMME P_{\delta} (triple root)
\begin{lemma}[Appearance of a triple root]\label{lem:fact_P_delta_triple-root}
Assume $\mu=\frac{8A^{2}d}{27}$ and set $\delta_{0} \= \frac{A^{2}d}{27}$. 
\begin{itemize}
\refstepcounter{ATripleRoots}\label{ATripleRoots-kind-of-roots}
	\item [$(\theATripleRoots)$] If $\delta \neq \delta_{0}$, the three roots are simple, one is real and the two others are complex conjugated, and as $\delta$ encounters $\delta_{0}$, the three simple roots merge in a triple root $\lambda_{0} = -\frac{A\sqrt{d}}{3}$.
\end{itemize}
Without loss of generality, we assume that $\alpha$ is the real root for all $\delta\geq 0$.
\begin{itemize}
	\refstepcounter{ATripleRoots}\label{ATripleRoots-epsR-espI}
	\item [$(\theATripleRoots)$] Letting $\epsR \= \realpart{\beta} - \lambda_{0}$ and $\epsI \= \impart{\beta}$, one has
$$
\alpha = \lambda_{0} - 2\epsR,
\qquad
\beta = \lambda_{0} + \epsR + i\epsI,
\qquad
\gamma = \lambda_{0} + \epsR - i\epsI,
$$
and $\epsR \stackrel{\delta\to\delta_{0}}{=}\mathcal{O}\parentheses{\epsI}$.
\end{itemize}
Define $\eta \= \frac{\delta_{0}}{2}$ and ${\Eb} \= \mathbb{R}_{+}\setminus\intervalleoo{\delta_{0}-\eta}{\delta_{0}+\eta}$, then there is $\varepsilon>0$ such that
\begin{itemize}
\refstepcounter{ATripleRoots}\label{ATripleRoots-roots-staying-away}
	\item [$(\theATripleRoots)$] $\inf\limits_{\substack{\delta\in{\Eb}}} \Parentheses{\verti{\alpha-\beta} , \verti{\alpha-\gamma} , \verti{\beta-\gamma}} > \varepsilon$,
\refstepcounter{ATripleRoots}\label{ATripleRoots-seg_away_from_zero}
	\item [$(\theATripleRoots)$] $\inf\limits_{\substack{\delta\in{\Eb}^{c}}}\;\;\inf\limits_{\substack{\lambda\in\intervalleff{\alpha}{\beta}}} \;\;\verti{\lambda} > \varepsilon$.
\end{itemize}
\end{lemma}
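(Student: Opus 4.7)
The plan has four parts, matching the four claims.

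For \textbf{(i)}, I would first impose $P_\delta(\sigma)=P_\delta'(\sigma)=P_\delta''(\sigma)=0$. The equation $P_\delta''(\sigma)=6\sigma+2A\sqrt{d}=0$ forces $\sigma=\lambda_0\=-A\sqrt{d}/3$; then $P_\delta'(\lambda_0)=0$ yields $\mu+\delta=A^{2}d/3$; and finally $P_\delta(\lambda_0)=0$ pins down $\delta=A^{2}d/27=\delta_0$, consistent with the hypothesis $\mu=8A^{2}d/27$. To show that $\delta_0$ is the \emph{unique} multiple-root parameter in $\mathbb{R}_+$, I would parametrize a putative double root $r$ (with third root $s=-A\sqrt{d}-2r$ from Vieta) and obtain a cubic equation in $r$ which, thanks to $\mu=8A^{2}d/27$, factors as $\parentheses{r+A\sqrt{d}/3}^{2}\parentheses{r+4A\sqrt{d}/3}$. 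Hence only $r=\lambda_0$ (the triple root at $\delta_0$) and $r=-4A\sqrt{d}/3$ (yielding $\delta=-80A^{2}d/27<0$, excluded) arise as candidates. The real/complex-conjugate structure for $\delta\neq \delta_0$ then follows by continuity of the roots, since this structure can change only across a multiple-root value of $\delta$; as a base case $P_0(\sigma)=\sigma\Parentheses{\sigma^{2}+A\sqrt{d}\sigma+\mu}$, whose quadratic factor has discriminant $A^{2}d-4\mu=-5A^{2}d/27<0$.

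For \textbf{(ii)}, the cleanest route is a Puiseux-type expansion around $\lambda_0$. Substituting $\sigma=\lambda_0+h$ and exploiting that $\lambda_0$ is a triple root at $\delta_0$ (so $P_{\delta_0}(\lambda_0+h)=h^{3}$), a direct computation gives
$$
P_\delta(\lambda_0+h) \; = \; h^{3}+(\delta-\delta_0)\Parentheses{h+\tfrac{2A\sqrt{d}}{3}}.
$$
For $\delta$ close to $\delta_0$ the roots therefore satisfy $h^{3}\sim -\tfrac{2A\sqrt{d}}{3}(\delta-\delta_0)$, so $h=\rho\zeta_{j}$ with $\rho=\Parentheses{\tfrac{2A\sqrt{d}}{3}\verti{\delta-\delta_0}}^{1/3}$ and the three $\zeta_j$'s the cube roots of a unit-modulus complex number. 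One branch is real and the other two are complex conjugated, and direct inspection yields $\epsR=\pm\rho/2$, $\epsI=\rho\sqrt{3}/2$ at leading order. The identity $\alpha+\beta+\gamma=-A\sqrt{d}=3\lambda_0$ then forces $\alpha=\lambda_0-2\epsR$, and $\epsR/\epsI\to \pm 1/\sqrt{3}$ gives the claimed $\epsR=\mathcal{O}(\epsI)$. Full rigour comes from the implicit function theorem applied to $h^{3}+(\delta-\delta_0)(h+2A\sqrt{d}/3)=0$ after the change of variable $h=\rho\widetilde{h}$.

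For \textbf{(iii)} and \textbf{(iv)} I would rely on compactness. For (iii), on the compact set $\intervalleff{0}{\delta_\infty}\cap\Eb$ (with $\delta_\infty$ from Lemma \ref{lem:fact_P_delta_large_values_of_delta}) the unordered triple $\ensemble{\alpha,\beta,\gamma}$ depends continuously on $\delta$ and no multiple root occurs (since $\delta_0$ is excluded), so the minimum pairwise distance is a continuous strictly positive function, bounded below; for $\delta\geq \delta_\infty$, Lemma \ref{lem:fact_P_delta_large_values_of_delta} yields $\alpha\to -A\sqrt{d}$ and $\verti{\impart{\beta}}\to +\infty$, making all pairwise distances diverge. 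Combining gives a uniform $\varepsilon>0$. For (iv), on the closed interval $\intervalleff{\delta_0/2}{3\delta_0/2}$ the segment $\intervalleff{\alpha}{\beta}$ depends continuously on $\delta$, collapsing to the single point $\lambda_0\neq 0$ at $\delta_0$. For $\delta\neq \delta_0$ in this interval, $\alpha$ is real while $\beta\notin\mathbb{R}$, so a point of the segment can equal $0$ only if $\alpha=0$ itself, which requires $\delta=0\notin\intervalleff{\delta_0/2}{3\delta_0/2}$. The distance from the segment to $0$ is thus a continuous strictly positive function on a compact set, hence bounded below by some $\varepsilon>0$.

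The main obstacle I anticipate lies in (ii): the Puiseux expansion supplies the leading behaviour for free, but one must track carefully which cube-root branches correspond to the real versus complex-conjugate roots and verify that the higher-order corrections preserve the leading $\mathcal{O}(\epsI)$ bound. The algebra of (i) is also delicate: the factorization $\parentheses{r+A\sqrt{d}/3}^{2}\parentheses{r+4A\sqrt{d}/3}$ is precisely where the critical value $\mu=8A^{2}d/27$ enters, and it is essential for ruling out a second multiple-root parameter in $\mathbb{R}_+$.
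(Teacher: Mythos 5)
Your proof is correct and, in fact, fills a gap: the paper states Lemmas A.2--A.5 of Appendix A.2 without proof (it only appeals to Figure~\ref{figure-Roots} and the phrase ``a more detailed analysis''), so there is no internal argument to compare against. Your derivations check out: for part (i), the double-root resultant equation is $2r^{3}+4A\sqrt{d}\,r^{2}+2A^{2}d\,r+A\sqrt{d}\,\mu=0$, which at $\mu=\tfrac{8A^{2}d}{27}$ does factor as $2\bigl(r+\tfrac{A\sqrt{d}}{3}\bigr)^{2}\bigl(r+\tfrac{4A\sqrt{d}}{3}\bigr)$, and the second candidate gives $\delta=-\tfrac{80A^{2}d}{27}<0$, hence excluded. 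The identity
\begin{equation*}
P_{\delta}(\lambda_{0}+h)=h^{3}+(\delta-\delta_{0})\Bigl(h+\tfrac{2A\sqrt{d}}{3}\Bigr),
\end{equation*}
obtained from $P_{\delta}-P_{\delta_{0}}=(\delta-\delta_{0})(\sigma+A\sqrt d)$, is exact and is the right engine for (ii); the dominant balance $h^{3}\sim-\tfrac{2A\sqrt d}{3}(\delta-\delta_{0})$ gives $\epsR/\epsI\to\pm 1/\sqrt{3}$ on either side of $\delta_{0}$, hence $\epsR=\mathcal O(\epsI)$. Two small remarks: in (i), note that the relation $\alpha=\lambda_{0}-2\epsR$ is an \emph{exact} Vieta identity given the definitions of $\epsR,\epsI$, so only the $\mathcal O$ estimate in (ii) actually needs the Puiseux analysis; and in the continuity argument at the end of (i), it is worth saying explicitly that $(0,\delta_{0})$ and $(\delta_{0},+\infty)$ are the two connected components on which the $1$\,real\,$+$\,$2$\,conjugate structure is constant, anchored at $\delta=0$ and at large $\delta$ (Lemma~\ref{lem:fact_P_delta_large_values_of_delta}) respectively --- you do supply both anchors, just not as a single declarative sentence. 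Parts (iii) and (iv) are standard compactness arguments and are correctly executed; in (iv), the key observation that $0\in[\alpha,\beta]$ would force $\alpha=0$, hence $\delta=0\notin[\delta_{0}/2,\,3\delta_{0}/2]$, is exactly the right argument.
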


%LEMME P_{\delta} (two double roots)
\begin{lemma}[Appearance of a double root, two times]\label{lem:fact_P_delta_2-double-roots}
Assume $\frac{A^{2}d}{4}\leq\mu<\frac{8A^{2}d}{27}$, then there are $0\leq\delta_{1}<\delta_{2}$ such that the following holds.
\begin{itemize}
\refstepcounter{ATwoDoubleRoots}\label{ATwoDoubleRoots-kind-of-roots}
	\item [$(\theATwoDoubleRoots)$] If $\delta \neq \delta_{1}$ and $\delta \neq \delta_{2}$, the three roots are simple and as $\delta$ encounters $\delta_{i}$ $\parentheses{i=1\text{ or }2}$, two of the three simple roots merge in a double root $\lambda_{i}$.
\end{itemize}
Without loss of generality, we assume that, for $i=1$ or $2$, the double root $\lambda_{i}$ derives from the merging of $\alpha$ and $\beta$. Define $\eta \= \frac{\delta_{2}-\delta_{1}}{2}$ and ${\Eb} \= \mathbb{R}_{+}\setminus\crochets{\intervalleoo{\delta_{1}-\eta}{\delta_{1}+\eta}\cup\intervalleoo{\delta_{2}-\eta}{\delta_{2}+\eta}}$, then there is $\varepsilon>0$ such that
\begin{itemize}
\refstepcounter{ATwoDoubleRoots}\label{ATwoDoubleRoots-roots-staying-away}
	\item [$(\theATwoDoubleRoots)$] $\inf\limits_{\substack{\delta\in{\Eb}}} \Parentheses{\verti{\alpha-\beta} , \verti{\alpha-\gamma} , \verti{\beta-\gamma}} > \varepsilon$,
\refstepcounter{ATwoDoubleRoots}\label{ATwoDoubleRoots-seg-away-from-zero-and-gamma}
	\item [$(\theATwoDoubleRoots)$] $\inf\limits_{\substack{\delta\in{\Eb}^{c}}}\;\;\inf\limits_{\substack{\lambda\in\intervalleff{\alpha}{\beta}}}\;\; \Parentheses{\verti{\lambda} , \verti{\lambda-\gamma}} > \varepsilon$.\end{itemize}
\end{lemma}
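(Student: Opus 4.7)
The plan is to parametrize the locus of $(\mu, \delta)$-pairs for which $P_\delta$ admits a multiple root, then intersect this curve with the horizontal line $\mu = \mathrm{const}$. A multiple root $\sigma$ of $P_\delta$ is real and satisfies both $P_\delta(\sigma) = 0$ and $P_\delta'(\sigma) = 0$. Extracting $\mu + \delta = -3\sigma^2 - 2A\sqrt{d}\,\sigma$ from the latter and substituting into the former yields
\begin{equation*}
\delta(\sigma) = \frac{\sigma^2\parentheses{2\sigma + A\sqrt{d}}}{A\sqrt{d}},
\qquad
\mu(\sigma) = -4\sigma^2 - 2A\sqrt{d}\,\sigma - \frac{2\sigma^3}{A\sqrt{d}}.
\end{equation*}
The constraint $\delta \geq 0$ forces $\sigma \in \intervalleff{-A\sqrt{d}/2}{\,0}$. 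A direct calculus study on this interval shows that $\mu(\sigma)$ is unimodal, rising from $\mu(0) = 0$ to the maximum $\mu(-A\sqrt{d}/3) = 8A^2d/27$ (the triple-root value) then falling to $\mu(-A\sqrt{d}/2) = A^2d/4$; simultaneously $\delta(\sigma)$ vanishes at both endpoints and reaches its max $A^2d/27$ at $\sigma = -A\sqrt{d}/3$.

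Consequently, for $\mu \in \intervallefo{A^2d/4}{8A^2d/27}$, the equation $\mu(\sigma) = \mu$ admits exactly two solutions $\sigma_\ell \in \intervallefo{-A\sqrt{d}/2}{-A\sqrt{d}/3}$ and $\sigma_r \in \intervalleoo{-A\sqrt{d}/3}{\,0}$, producing two candidate values $\delta(\sigma_\ell), \delta(\sigma_r) \in \intervalleff{0}{A^2d/27}$, which I relabel $\delta_1 < \delta_2$. These are \emph{distinct}: if $\delta(\sigma_\ell) = \delta(\sigma_r)$ held, the polynomial $P_{\delta(\sigma_\ell)}$ would possess two different real double roots, which is impossible for a cubic unless they coalesce into a triple root, itself requiring the excluded $\mu = 8A^2d/27$. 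For every other $\delta \geq 0$ one has $\Delta_\delta \neq 0$, hence three simple roots. This proves $(i)$, after relabeling the roots by continuity so that the merging pair is $(\alpha,\beta)$ at both $\delta_1$ and $\delta_2$.

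For $(ii)$, the roots $\alpha, \beta, \gamma$ are continuous functions of $\delta$ on $\mathcal{E}$ where they remain pairwise distinct; combining compactness on $\intervalleff{0}{\delta_1 - \eta}\cup\intervalleff{\delta_2+\eta}{\delta_\infty}$ with the asymptotic separation from Lemma~\ref{lem:fact_P_delta_large_values_of_delta} (one root tends to $-A\sqrt{d}$ while the conjugate pair goes to $0 \pm i\infty$) yields a uniform lower bound $\varepsilon > 0$. For $(iii)$, the merged double root $\lambda_i$ at each $\delta_i$ is nonzero ($0$ is a root of $P_\delta$ only when $\delta = 0$, and in the borderline case $\mu = A^2d/4$ a direct inspection gives $\lambda_1 = -A\sqrt{d}/2 \neq 0$) and distinct from the simple root $\gamma_i$ (otherwise $\lambda_i$ would be a triple root, forcing the excluded $\mu = 8A^2d/27$). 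By continuity in $\delta$, the whole segment $\intervalleff{\alpha}{\beta}$ contracts to $\{\lambda_i\}$ as $\delta \to \delta_i$ and hence lies in a small neighborhood of $\lambda_i$, so $\verti{\lambda}$ and $\verti{\lambda - \gamma}$ admit a uniform positive lower bound on each compact interval $\intervalleff{\delta_i - \eta}{\delta_i + \eta}$. The main technical obstacle is the distinctness $\delta_1 \neq \delta_2$: it rests on the rigidity of cubic polynomials, namely the impossibility of two genuinely different double roots without coincidence into a triple root.
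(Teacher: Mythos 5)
The paper states Lemmas A.1 through A.5 without proof, attributing them to a ``more detailed analysis'' and to Figure VII, so there is no in-paper argument to compare yours against; I therefore evaluate your proposal on its own merits.

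Your parametrization of the multiple-root locus by the double root $\sigma$ is the natural route, and your formulas for $\delta(\sigma)$ and $\mu(\sigma)$ are correct, as is the unimodality analysis on $[-A\sqrt{d}/2,\,0]$, the identification of $\sigma_\ell$ and $\sigma_r$, and the pigeonhole argument showing $\delta(\sigma_\ell)\neq\delta(\sigma_r)$. That settles $(i)$. Your compactness-plus-asymptotics argument for $(ii)$ is also fine.

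However, there is a genuine gap in $(iii)$. You conclude that $\verti{\lambda-\gamma}$ is uniformly bounded below on the \emph{full} interval $\intervalleff{\delta_i-\eta}{\delta_i+\eta}$, but your argument -- the segment $\intervalleff{\alpha}{\beta}$ contracts to $\ensemble{\lambda_i}$ which is distinct from $\gamma_i$ -- only controls $\delta$ in a small (unquantified) neighborhood of $\delta_i$, while $\eta=(\delta_2-\delta_1)/2$ is a fixed, not small, length. To pass from pointwise to uniform positivity via compactness you must first confirm pointwise positivity throughout $\overline{\mathcal{E}^c}$, and the non-obvious case is $\delta<\delta_1$ (or $\delta>\delta_2$), where $\alpha,\beta$ are a conjugate pair so that $\intervalleff{\alpha}{\beta}$ is the \emph{vertical} segment through $\realpart{\alpha}$. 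Since $\alpha+\beta+\gamma=-A\sqrt{d}$ forces $\realpart{\alpha}=\realpart{\beta}=-\tfrac{A\sqrt{d}+\gamma}{2}$, the real root $\gamma$ lands on that segment exactly when $\gamma=-A\sqrt{d}/3$. So you must rule out $\gamma(\delta)=-A\sqrt{d}/3$ for $\delta$ in the conjugate-pair regime. This is true but does not follow from anything you have written: the unique $\delta$ for which $-A\sqrt{d}/3$ is a root of $P_\delta$ is $\delta^*=\tfrac{\mu}{2}-\tfrac{A^2d}{9}$ (from $P_\delta(-A\sqrt{d}/3)=A\sqrt{d}\bigl(\tfrac{2A^2d}{27}-\tfrac{\mu}{3}+\tfrac{2\delta}{3}\bigr)$), and one checks that $\delta^*<\tfrac{A^2d}{27}$ whenever $\mu<\tfrac{8A^2d}{27}$, which forces the co-factor $\sigma^2+\tfrac{2A\sqrt{d}}{3}\sigma+3\delta^*$ to have two real roots, so $P_{\delta^*}$ has three real roots and hence $\delta^*\in\intervalleoo{\delta_1}{\delta_2}$, i.e. \emph{not} in the regime in question. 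Without this verification, the uniform bound in $(iii)$ is not established. (Note also that bound $(ii)$ does not rescue you here: $\verti{\alpha-\gamma}>\varepsilon$ does not prevent $\gamma$ from lying at the midpoint $\realpart{\alpha}$ of the vertical segment $\intervalleff{\alpha}{\beta}$.)

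A secondary point, not a gap but worth stating: the labeling ``$\alpha,\beta$ merge'' is necessarily a \emph{local} choice near each $\delta_i$, since $\lambda_1<-A\sqrt{d}/3<\gamma_1$ while $\gamma_2<-A\sqrt{d}/3<\lambda_2$, so the pair that collides at $\delta_1$ is the leftmost two real roots while at $\delta_2$ it is the rightmost two. Your phrasing ``relabeling by continuity so that the merging pair is $(\alpha,\beta)$ at both $\delta_1$ and $\delta_2$'' suggests a single global relabeling, which is impossible; the lemma's WLOG should be read as a per-$i$ convention, as indeed is done in the proof of Lemma~\ref{lem:controle-L-infty-Phi}.
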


%LEMME P_{\delta} (one double root)
\begin{lemma}[Appearance of a double root, one time]\label{lem:fact_P_delta_1-double-root}
Assume $0<\mu<\frac{A^{2}d}{4}$, then there is $\delta_{2}>0$ such that the following holds.
\begin{itemize}
\refstepcounter{AOneDoubleRoots}\label{AOneDoubleRoots-kind-of-roots}
	\item [$(\theAOneDoubleRoots)$] If $\delta \neq \delta_{2}$, the three roots are simple and as $\delta$ encounters $\delta_{2}$, two of the three simple roots merge in a double root $\lambda_{2}$.
\end{itemize}
Without loss of generality, we assume that the double root $\lambda_{2}$ derives from the merging of $\alpha$ and $\beta$. Define $\eta \= \frac{\delta_{2}}{2}$ and ${\Eb} \= \mathbb{R}_{+}\setminus\intervalleoo{\delta_{2}-\eta}{\delta_{2}+\eta}$, then there is $\varepsilon>0$ such that
\begin{itemize}
\refstepcounter{AOneDoubleRoots}\label{AOneDoubleRoots-roots-staying-away}
	\item [$(\theAOneDoubleRoots)$] $\inf\limits_{\substack{\delta\in{\Eb}}} \Parentheses{\verti{\alpha-\beta} , \verti{\alpha-\gamma} , \verti{\beta-\gamma}} > \varepsilon$,
\refstepcounter{AOneDoubleRoots}\label{AOneDoubleRoots-seg-away-from-zero-and-gamma}
	\item [$(\theAOneDoubleRoots)$] $\inf\limits_{\substack{\delta\in{\Eb}^{c}}}\;\;\inf\limits_{\substack{\lambda\in\intervalleff{\alpha}{\beta}}} \;\;\Parentheses{\verti{\lambda} , \verti{\lambda-\gamma}} > \varepsilon$.\end{itemize}
\end{lemma}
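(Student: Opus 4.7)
The plan is to parametrise the multiple-root locus by the value of the double root itself. A complex number $\lambda$ is a double root of $P_{\delta}$ iff $P_{\delta}(\lambda)=P_{\delta}'(\lambda)=0$. Solving $3\lambda^{2}+2A\sqrt{d}\,\lambda+(\mu+\delta)=0$ for $\mu+\delta$ and substituting into $\lambda^{3}+A\sqrt{d}\,\lambda^{2}+(\mu+\delta)\lambda+A\sqrt{d}\,\delta=0$ yields, after factorisation,
\[
-2\lambda\parentheses{\lambda+A\sqrt{d}}^{2}=A\sqrt{d}\,\mu.
\]
Writing this as $\mu=g(\lambda)$ with $g(\lambda):=-2\lambda(\lambda+A\sqrt{d})^{2}/(A\sqrt{d})$, and recovering $\delta$ via Vieta's formulas $\alpha+\beta+\gamma=-A\sqrt{d}$ and $\alpha\beta\gamma=-A\sqrt{d}\,\delta$, the associated $\delta$ is $\delta=h(\lambda):=\lambda^{2}(A\sqrt{d}+2\lambda)/(A\sqrt{d})$, with third root $\tilde\lambda=-A\sqrt{d}-2\lambda$. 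The constraint $\delta\geq 0$ restricts $\lambda$ to $[-A\sqrt{d}/2,0]$.

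Then I would analyse the shape of $g$ on this interval. One computes $g'(\lambda)=-2(\lambda+A\sqrt{d})(3\lambda+A\sqrt{d})/(A\sqrt{d})$, so the unique interior critical point is $\lambda=-A\sqrt{d}/3$, and $g(-A\sqrt{d}/3)=8A^{2}d/27$, $g(-A\sqrt{d}/2)=A^{2}d/4$, $g(0)=0$. Hence $g$ increases on $[-A\sqrt{d}/2,-A\sqrt{d}/3]$ from $A^{2}d/4$ up to $8A^{2}d/27$, then decreases on $[-A\sqrt{d}/3,0]$ back to $0$. Under the hypothesis $0<\mu<A^{2}d/4$, the value $\mu$ lies strictly below $g(-A\sqrt{d}/2)$, so $g(\lambda)=\mu$ admits \emph{exactly one} solution $\lambda_{2}\in[-A\sqrt{d}/2,0]$, which moreover lies in $(-A\sqrt{d}/3,0)$. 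Setting $\delta_{2}:=h(\lambda_{2})$, the bound $A\sqrt{d}+2\lambda_{2}>A\sqrt{d}/3$ yields $\delta_{2}>0$; and since $\lambda_{2}\neq-A\sqrt{d}/3$, the third root $\tilde\lambda_{2}=-A\sqrt{d}-2\lambda_{2}\neq\lambda_{2}$, so the multiple root at $\delta_{2}$ is genuinely double (not triple). This establishes $(\ref{AOneDoubleRoots-kind-of-roots})$.

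For $(\ref{AOneDoubleRoots-roots-staying-away})$, I would split $\mathcal{E}=\mathbb{R}_{+}\setminus(\delta_{2}-\eta,\delta_{2}+\eta)$ into a compact piece $\mathcal{E}\cap[0,\delta_{\infty}]$ and a tail $[\delta_{\infty},+\infty)$, with $\delta_{\infty}$ provided by Lemma \ref{lem:fact_P_delta_large_values_of_delta}. On the tail, that same lemma gives $\alpha\to-A\sqrt{d}$, $|\Im\beta|\to+\infty$ and $\gamma=\overline{\beta}$, so each pairwise distance is bounded below by a positive constant. On the compact part, $(\ref{AOneDoubleRoots-kind-of-roots})$ combined with continuity of the roots in $\delta$ gives strictly positive pairwise distances, hence a uniform positive infimum by compactness. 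Gathering both contributions yields the desired $\varepsilon>0$.

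For $(\ref{AOneDoubleRoots-seg-away-from-zero-and-gamma})$, since $\lambda_{2}\in(-A\sqrt{d}/3,0)$ and $\tilde\lambda_{2}\neq\lambda_{2}$, continuity of the roots in $\delta$ at $\delta_{2}$ implies that for $\eta$ sufficiently small the coalescing pair $\alpha(\delta),\beta(\delta)$ stays in an arbitrarily small complex disk around $\lambda_{2}$, while $\gamma(\delta)$ remains close to $\tilde\lambda_{2}$. The complex segment $[\alpha(\delta),\beta(\delta)]$ lies in that disk, which is at positive distance both from $0$ and from $\gamma(\delta)$; choosing $\eta$ small enough delivers the required uniform lower bound. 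The main obstacle in the whole scheme is $(\ref{AOneDoubleRoots-kind-of-roots})$, specifically identifying $A^{2}d/4=g(-A\sqrt{d}/2)$ as the precise threshold at which the second candidate $\delta_{1}$, coming from the increasing branch of $g$ on $[-A\sqrt{d}/2,-A\sqrt{d}/3]$, crosses out of $\mathbb{R}_{+}$; the monotonicity analysis of $g$ above is exactly what pinpoints that threshold and gives the uniqueness of $\delta_{2}$.
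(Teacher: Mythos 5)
The paper states this appendix lemma without any written proof (the section simply announces that ``a more detailed analysis provides further information''), so there is no author argument to compare against; your proposal is essentially supplying that omitted detail.

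Your treatment of $(\ref{AOneDoubleRoots-kind-of-roots})$ is correct and is the natural route. Eliminating $\mu+\delta$ from the system $P_\delta(\lambda)=P_\delta'(\lambda)=0$ to get
\[
\mu \;=\; g(\lambda)\;=\;-\frac{2\lambda\parentheses{\lambda+A\sqrt{d}}^{2}}{A\sqrt{d}},
\qquad
\delta \;=\; h(\lambda)\;=\;\frac{\lambda^{2}\parentheses{A\sqrt{d}+2\lambda}}{A\sqrt{d}},
\]
noting that $\mu>0$ forces $\lambda<0$ and $\delta\geq 0$ forces $\lambda\geq -A\sqrt{d}/2$, and then reading off the count of preimages of $\mu$ on $[-A\sqrt{d}/2,0)$ from the unimodal graph of $g$ (increasing from $A^{2}d/4$ to $8A^{2}d/27$, then decreasing to $0$) is exactly right. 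In particular $0<\mu<A^{2}d/4$ kills the preimage on the rising branch, leaving a unique $\lambda_{2}\in\parentheses{-A\sqrt{d}/3,0}$, hence $\delta_{2}=h(\lambda_{2})>0$, and $\lambda_{2}\neq -A\sqrt{d}/3$ gives $\tilde\lambda_{2}=-A\sqrt{d}-2\lambda_{2}\neq\lambda_{2}$, so the degeneracy is a genuine double root. Your sketch of $(\ref{AOneDoubleRoots-roots-staying-away})$ — tail via Lemma \ref{lem:fact_P_delta_large_values_of_delta}, compact part via continuity of the distinct roots plus compactness — is also fine.

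The one genuine gap is in $(\ref{AOneDoubleRoots-seg-away-from-zero-and-gamma})$. The lemma \emph{fixes} $\eta:=\delta_{2}/2$, so $\Eb^{c}=\intervalleoo{\delta_{2}/2}{3\delta_{2}/2}$ is a prescribed interval, whereas your argument only establishes the bound ``for $\eta$ sufficiently small'', by confining $\alpha(\delta),\beta(\delta)$ to a tiny disk around $\lambda_{2}$. At a $\delta$ a fixed distance from $\delta_{2}$ this confinement fails, and the segment $\intervalleff{\alpha}{\beta}$ can be long; the fact that its \emph{endpoints} avoid $0$ and $\gamma$ does not alone imply that the whole segment does. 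To close this for the stated $\eta$ one needs a structural observation — for instance, that when $\alpha,\beta$ are conjugates the segment is vertical and $\inf_{\lambda\in\intervalleff{\alpha}{\beta}}\verti{\lambda}=\verti{\Re\alpha}=\tfrac12\verti{A\sqrt{d}+\gamma}$, $\inf_{\lambda\in\intervalleff{\alpha}{\beta}}\verti{\lambda-\gamma}=\tfrac12\verti{A\sqrt{d}+3\gamma}$, and when they are real the segment is an interval in $(-A\sqrt{d},0)$ not containing $\gamma$ — and then a compactness argument on $\intervalleff{\delta_{2}/2}{3\delta_{2}/2}$ mirroring your step $(\ref{AOneDoubleRoots-roots-staying-away})$. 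Alternatively, note that the downstream proof of Lemma \ref{lem:controle-L-infty-Phi} only uses the \emph{existence} of some $\eta>0$ with $(\ref{AOneDoubleRoots-roots-staying-away})$\,--\,$(\ref{AOneDoubleRoots-seg-away-from-zero-and-gamma})$, the resulting $\varepsilon$ being absorbed into $\lesssim$; you should then say explicitly that you are proving that weaker, but sufficient, variant.
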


\subsection{Fourier, Laplace and Fourier/Laplace transforms}\label{ss:appendix_Transforms}

The transforms of a function $w$ depending on $t>0$ and $x\in \R^{N-1}$ we use are
\begin{itemize}
	\item the $x$-Fourier one (said \lq\lq \textit{hat} $w$''):
	$$
	\hat{w}\parentheses{t,\xi}
	=
	{\Fb}\crochets{w\parentheses{t,\point}}\parentheses{\xi}
	\=
	\int_{\mathbb{R}^{N-1}}^{}
		w\parentheses{t,z}e^{-i \xi \cdot z}
	dz;
	$$
	\item the $t$-Laplace one (said \lq\lq \textit{frown} $w$''):
	$$
	\frown{w}\parentheses{s,x}
	=
	{\Lb}\crochets{w\parentheses{\point,x}}\parentheses{s}
	\=
	\int_{0}^{+\infty}
		w\parentheses{\tau,x}e^{-s\tau}
	d\tau;
	$$
	\item the $x$-Fourier/$t$-Laplace one (said \lq\lq \textit{frown-hat} $w$''):
	$$
	\frownhat{w}\parentheses{s,\xi}
	=
	{\Fb}{\Lb}\crochets{w\parentheses{\point,\point}}\parentheses{s,\xi}
	\=
	\int_{\mathbb{R}^{N-1}}^{}
		\int_{0}^{+\infty}
			w\parentheses{\tau,z}e^{-\parentheses{s\tau + i \xi \cdot z}}
		d\tau
	dz.
	$$
\end{itemize}

The following Fourier and Laplace transforms are well-known  --- see, e.g., \cite[page 250, lines 84 and 88]{Spi-book} for $(\ref{A1-Laplace-E_sur_sig})$ and $(\ref{A1-Laplace-E_sur_sig_fois_sig_plus_b})$.

%LEMME A1
\begin{lemma}[Useful $x$-Fourier and $t$-Laplace transforms]\label{lem:inverse-Laplace-Fourier}
For all $a\in\mathbb{R}$ and all $b\in\mathbb{C}$,
\begin{itemize}
\refstepcounter{AUn}\label{A1-Fourier-Gaussienne}
	\item [$(\theAUn)$] ${\Fb}\crochets{x\mapsto \frac{e^{-\frac{\vertii{x}{}^{2}}{4a}}}{\parentheses{4\pi a}^{\frac{N-1}{2}}}}\parentheses{\xi} = e^{-a\vertii{\xi}{}^{2}}$,
	\qquad $a>0$.
\refstepcounter{AUn}\label{A1-Laplace-un-sur-a-plus-s}
	\item [$(\theAUn)$] ${\Lb}\crochets{t\mapsto e^{-at}}\parentheses{s} = \frac{1}{a + s}$,
	\qquad $s>-a$.
\refstepcounter{AUn}\label{A1-Laplace-E_sur_sig}
	\item [$(\theAUn)$] ${\Lb}\crochets{t\mapsto\frac{e^{-\frac{a^{2}}{4t}}}{\sqrt{\pi t}}}\parentheses{s} = \frac{e^{-a\sqrt{s}}}{\sqrt{s}}$,
	\qquad $s>0$.
\refstepcounter{AUn}\label{A1-Laplace-E_sur_sig_fois_sig_plus_b}
	\item [$(\theAUn)$] ${\Lb}\crochets{t\mapsto\frac{\Erfc}{\Gamma}\Parentheses{\frac{a+2bt}{2\sqrt{t}}}e^{-\frac{a^{2}}{4t}}}\parentheses{s} = \frac{e^{-a\sqrt{s}}}{\sqrt{s}\Parentheses{\sqrt{s}+ \, b}}$,
	\qquad $s>\realpart{b}^{2}$.
\refstepcounter{AUn}\label{A1-Laplace-E_sur_sig_moins_b}
	\item [$(\theAUn)$] ${\Lb}\crochets{t\mapsto e^{-\frac{a^{2}}{4t}} \Parentheses{\frac{1}{\sqrt{\pi t}}+ b \, \frac{\Erfc}{\Gamma}\Parentheses{\frac{a-2bt}{2\sqrt{t}}}}}\parentheses{s} = \frac{e^{-a\sqrt{s}}}{\sqrt{s}- \, b}$,
	\qquad $s>\realpart{b}^{2}$.
\end{itemize}
where $\Gamma\parentheses{\ell} \= e^{-\ell^{\, 2}}$ and the definition of $\Erfc\!$ being recalled in Appendix \ref{ss:appendix_ERFC}.\end{lemma}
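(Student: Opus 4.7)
My plan is to tackle the five identities in order of increasing difficulty, chaining them so that each subsequent one leverages the previous. For $(i)$, I would use that the Gaussian $e^{-\vertii{x}{}^{2}/(4a)}$ factorizes as a tensor product of one-dimensional Gaussians, so that its Fourier transform factorizes too, reducing the task to the classical one-dimensional identity $\int_{\R} e^{-x^{2}/(4a)-i\xi x}\,dx = \sqrt{4\pi a}\,e^{-a\xi^{2}}$, proved by completing the square and shifting the integration contour (justified by analyticity). Item $(ii)$ is immediate by direct evaluation of $\int_{0}^{+\infty} e^{-(a+s)t}\,dt$, valid for $s>-a$.

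For $(iii)$, assuming $a\geq 0$ (both sides depending only on $|a|$), I would set $I(a) \= \int_{0}^{+\infty} \tfrac{e^{-a^{2}/(4t)-st}}{\sqrt{\pi t}}\,dt$, and via the substitution $u=\sqrt{st}$ recast it as $I(a)=\tfrac{2}{\sqrt{\pi s}}\,F(sa^{2}/4)$ where $F(p)\=\int_{0}^{+\infty} e^{-u^{2}-p/u^{2}}\,du$. Differentiating $F$ under the integral and using the substitution $v=\sqrt p/u$ inside the derivative yields the first-order ODE $F'(p)=-F(p)/\sqrt p$, whence $F(p)=\tfrac{\sqrt\pi}{2}\,e^{-2\sqrt p}$, giving $I(a)=e^{-a\sqrt s}/\sqrt s$.

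The heart of the lemma is $(iv)$. My plan is to expand the factor $\tfrac{1}{\sqrt s+b}$ through its own Laplace representation $\tfrac{1}{\sqrt s+b}=\int_{0}^{+\infty} e^{-u(\sqrt s+b)}\,du$, valid under $\sqrt s+\realpart{b}>0$ (implied by $s>\realpart{b}^{2}$), thus
\[
\frac{e^{-a\sqrt s}}{\sqrt s\,(\sqrt s+b)} = \int_{0}^{+\infty} e^{-bu}\;\frac{e^{-(a+u)\sqrt s}}{\sqrt s}\,du.
\]
By $(iii)$, each factor $e^{-(a+u)\sqrt s}/\sqrt s$ is itself a $t$-Laplace transform, so after applying Fubini (legitimate under the above absolute-integrability condition) the right-hand side becomes the $t$-Laplace transform of $t\mapsto \int_{0}^{+\infty} e^{-bu}\,\tfrac{e^{-(a+u)^{2}/(4t)}}{\sqrt{\pi t}}\,du$. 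I would then complete the square through the elementary identity $bu+\tfrac{(a+u)^{2}}{4t}=\tfrac{(u+a+2bt)^{2}}{4t}-ab-b^{2}t$, and substitute $v=(u+a+2bt)/(2\sqrt t)$ in the inner integral, reducing it to $e^{ab+b^{2}t}\,\Erfc\Parentheses{\tfrac{a+2bt}{2\sqrt t}}$. Rewriting the prefactor as $e^{ab+b^{2}t}=e^{(a+2bt)^{2}/(4t)-a^{2}/(4t)}$ identifies this with $e^{-a^{2}/(4t)}\,\tfrac{\Erfc}{\Gamma}\Parentheses{\tfrac{a+2bt}{2\sqrt t}}$, which is the claim of $(iv)$.

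Finally, I would deduce $(v)$ algebraically from $(iii)$ and $(iv)$ through the partial-fraction identity $\tfrac{1}{\sqrt s-b}=\tfrac{1}{\sqrt s}+\tfrac{b}{\sqrt s(\sqrt s-b)}$: after multiplication by $e^{-a\sqrt s}$, the first term is $(iii)$ while the second is $b$ times $(iv)$ applied with $b$ replaced by $-b$. The main obstacle throughout is the extension from real $b>0$ to complex $b\in\C$ with $s>\realpart{b}^{2}$: I would first establish the formulas on the positive real axis, where Tonelli applies directly to nonnegative integrands, and then extend by analytic continuation in $b$ on the half-plane $\ensemble{b\in\C:\realpart{b}>-\sqrt s}$, on which both sides of the claimed equalities are holomorphic in $b$.
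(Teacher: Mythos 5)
Your proof is correct and self-contained, but it is genuinely different from what the paper does: the paper offers no proof of Lemma~\ref{lem:inverse-Laplace-Fourier} at all, simply pointing to a standard Laplace transform table (\cite{Spi-book}) for items $(\ref{A1-Laplace-E_sur_sig})$, $(\ref{A1-Laplace-E_sur_sig_fois_sig_plus_b})$, $(\ref{A1-Laplace-E_sur_sig_moins_b})$, and treating $(\ref{A1-Fourier-Gaussienne})$, $(\ref{A1-Laplace-un-sur-a-plus-s})$ as folklore. What you gain is a clean derivation from first principles with an elegant chaining: the ODE $F'(p)=-F(p)/\sqrt{p}$ for $F(p)=\int_0^\infty e^{-u^2-p/u^2}\,du$ yields $(\ref{A1-Laplace-E_sur_sig})$; representing $1/(\sqrt{s}+b)=\int_0^\infty e^{-u(\sqrt{s}+b)}\,du$, applying Fubini, and completing the square $bu+\tfrac{(a+u)^2}{4t}=\tfrac{(u+a+2bt)^2}{4t}-ab-b^2t$ delivers $(\ref{A1-Laplace-E_sur_sig_fois_sig_plus_b})$; and the partial fraction $\tfrac{1}{\sqrt{s}-b}=\tfrac{1}{\sqrt{s}}+\tfrac{b}{\sqrt{s}(\sqrt{s}-b)}$ reduces $(\ref{A1-Laplace-E_sur_sig_moins_b})$ to $(\ref{A1-Laplace-E_sur_sig})$ and $(\ref{A1-Laplace-E_sur_sig_fois_sig_plus_b})$ with $b\to -b$. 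All the algebra checks out, the Tonelli/Fubini justification on the real positive $b$-axis is sound, and the analytic continuation in $b$ is the right device to reach complex $b$. One minor slip worth flagging: in item $(\ref{A1-Laplace-E_sur_sig})$ the parenthetical \lq\lq both sides depending only on $\verti{a}$'' is false for the right-hand side $e^{-a\sqrt{s}}/\sqrt{s}$; the formulas $(\ref{A1-Laplace-E_sur_sig})$\,--\,$(\ref{A1-Laplace-E_sur_sig_moins_b})$ in fact \emph{require} $a\geq 0$ (as the left-hand sides depend only on $a^2$ while the right-hand sides do not), a restriction the lemma leaves implicit and under which the paper always invokes it, and which you do actually work under throughout. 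This does not affect the validity of your proof.
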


\begin{lemma}[Properties on $x$-Fourier and $t$-Laplace transforms]\label{lem:propri-Laplace-Fourier}
For $f,g : \mathbb{R}\to\mathbb{C}$, all $a\in\mathbb{R}$ and $s>0$,
\begin{itemize}
\refstepcounter{ADeux}\label{A2-Fourier-inversion}
	\item [$(\theADeux)$] $f\parentheses{x} = \frac{1}{\parentheses{2\pi}^{N-1}} \int_{\mathbb{R}^{N-1}}^{}{\Fb}\crochets{f}\parentheses{\xi} \, e^{i \xi\cdot x} d\xi$,
\refstepcounter{ADeux}\label{A2-Fourier-convolee}
	\item [$(\theADeux)$] ${\Fb}\crochets{f}\times{\Fb}\crochets{g} = {\Fb}\crochets{f\ast g} = {\Fb}\crochets{x\mapsto \int_{\mathbb{R}^{N-1}}^{}f\parentheses{x - z}g\parentheses{z}dz}$,
\refstepcounter{ADeux}\label{A2-Laplace-convolee}
	\item [$(\theADeux)$] ${\Lb}\crochets{f}\times{\Lb}\crochets{g} = {\Lb}\crochets{t\mapsto \int_{0}^{t}f\parentheses{t - \tau}g\parentheses{\tau}d\tau}$,
\refstepcounter{ADeux}\label{A2-Delay_th}
	\item [$(\theADeux)$] ${\Lb}\crochets{t\mapsto e^{-at}f(t)}\parentheses{s}={\Lb}\crochets{f}\parentheses{s+a}$.
\end{itemize}
\end{lemma}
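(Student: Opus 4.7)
The statement collects four classical identities, so the plan is to dispatch them one by one, relying on standard arguments and Fubini's theorem; no delicate estimates are needed. The only care required concerns the hypotheses implicit in the statement, namely that $f,g$ lie in suitable classes (e.g.\ $f,g\in L^{1}(\mathbb{R}^{N-1})$ or $\mathcal{S}(\mathbb{R}^{N-1})$ for the Fourier parts, and $f,g$ of exponential order for the Laplace parts) so that every integral converges absolutely and Fubini applies. Throughout, one writes out the transforms from the definitions given in Appendix \ref{ss:appendix_Transforms} and manipulates the iterated integrals.

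For $(\ref{A2-Fourier-inversion})$, the plan is to recall the Fourier inversion theorem on $\mathbb{R}^{N-1}$: if $f$ and $\mathcal{F}[f]$ are both integrable, then $f(x)=\frac{1}{(2\pi)^{N-1}}\int_{\mathbb{R}^{N-1}}\mathcal{F}[f](\xi)\,e^{i\xi\cdot x}\,d\xi$ for a.e.\ $x$. The quickest derivation is via a Gaussian regularization: insert the factor $e^{-\varepsilon\|\xi\|^{2}}$ under the integral, use Fubini together with the computation of the Gaussian Fourier transform in Lemma \ref{lem:inverse-Laplace-Fourier}\,$(\ref{A1-Fourier-Gaussienne})$, and pass to the limit $\varepsilon\to 0^{+}$ by dominated convergence and an approximate-identity argument. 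Alternatively, one may simply cite this as the standard Fourier inversion formula.

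For $(\ref{A2-Fourier-convolee})$ and $(\ref{A2-Laplace-convolee})$, I would proceed directly from the definitions. For the Fourier convolution identity, write
\begin{equation*}
\mathcal{F}[f\ast g](\xi)=\int_{\mathbb{R}^{N-1}}\!\!\left(\int_{\mathbb{R}^{N-1}}\! f(x-z)g(z)\,dz\right)\! e^{-i\xi\cdot x}\,dx,
\end{equation*}
apply Fubini, and perform the change of variables $x\mapsto x+z$ in the inner integral; the exponential factorizes as $e^{-i\xi\cdot(x+z)}=e^{-i\xi\cdot x}e^{-i\xi\cdot z}$, and the two integrals separate into $\mathcal{F}[f](\xi)\,\mathcal{F}[g](\xi)$. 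For the Laplace convolution identity, the analogous computation yields
\begin{equation*}
\mathcal{L}[f]\parentheses{s}\,\mathcal{L}[g]\parentheses{s}=\int_{0}^{+\infty}\!\!\int_{0}^{+\infty} f(\tau_{1})g(\tau_{2})\,e^{-s(\tau_{1}+\tau_{2})}\,d\tau_{1}\,d\tau_{2},
\end{equation*}
and the substitution $t=\tau_{1}+\tau_{2}$, $\tau=\tau_{2}$ (with Fubini to swap the order) rewrites the right-hand side as $\int_{0}^{+\infty}e^{-st}\left(\int_{0}^{t}f(t-\tau)g(\tau)\,d\tau\right)dt$, which is exactly $\mathcal{L}[f\ast g](s)$.

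For $(\ref{A2-Delay_th})$, this is an immediate computation from the definition:
\begin{equation*}
\mathcal{L}\crochets{t\mapsto e^{-at}f(t)}\parentheses{s}=\int_{0}^{+\infty} e^{-at}f(t)e^{-st}\,dt=\int_{0}^{+\infty} f(t)e^{-(s+a)t}\,dt=\mathcal{L}[f](s+a),
\end{equation*}
valid as soon as $s+a$ lies in the abscissa of convergence of $\mathcal{L}[f]$. No obstacle is expected in any of these four steps; the main point to be careful about is really only to state the integrability hypotheses sufficient for Fubini to apply, which is implicit in the way the lemma is used elsewhere in the paper.
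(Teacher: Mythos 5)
The paper states Lemma~\ref{lem:propri-Laplace-Fourier} as a collection of classical facts in the appendix and offers no proof of its own, so there is nothing to compare against; your Fubini-plus-change-of-variables derivations of $(\ref{A2-Fourier-convolee})$--$(\ref{A2-Delay_th})$ and your appeal to the standard inversion theorem for $(\ref{A2-Fourier-inversion})$ are exactly the textbook arguments one would cite here, and they are correct. You are also right that the only real care needed is to make explicit the integrability/exponential-order hypotheses under which the iterated integrals converge and Fubini applies, which the paper leaves implicit.
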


\medskip

\noindent{\bf Acknowledgement.} Matthieu Alfaro is supported by  the {\it région Normandie} project BIOMA-NORMAN 21E04343 and the ANR project DEEV ANR-20-CE40-0011-01. Samuel Tréton would like to acknowledge the {\it région Normandie} for the financial support of his PhD.
\MAT{The authors are grateful to the anonymous referee whose precise comments have improved the presentation of the results.} 

\bibliographystyle{siam}  

\bibliography{biblio}

\end{document}